\newtheorem{theorem}{Theorem}
\newtheorem{lemma}[theorem]{Lemma}
\theoremstyle{definition}
\newtheorem{definition}[theorem]{Definition}
\theoremstyle{remark}
\newtheorem{remark}[theorem]{Remark}
\newtheorem{example}[theorem]{Example}
\numberwithin{theorem}{section}
\numberwithin{equation}{section}
\renewenvironment{proof}[1][\proofname]{{\rm\bfseries #1. }}{\qed}
\newcommand{\N}{\mathds{N}}
\newcommand{\Z}{\mathds{Z}}
\newcommand{\R}{\mathds{R}}
\newcommand{\W}{\mathcal{W}}
\renewcommand{\d}{\mathrm{d}}
\newcommand{\dx}{\mathrm{d}x}
\newcommand{\dy}{\mathrm{d}y}
\newcommand{\dz}{\mathrm{d}z}
\newcommand{\dt}{\mathrm{d}t}
\newcommand{\loc}{\textnormal{loc}}
\renewcommand{\epsilon}{\varepsilon}
\renewcommand{\rho}{\varrho}
\DeclareMathOperator{\spt}{spt}
\DeclareMathOperator{\diam}{diam}
\DeclareMathOperator{\dist}{dist}
\DeclareMathOperator{\cp}{cap}
\DeclareMathOperator{\Lip}{Lip}
\def\Xint#1{\mathchoice
    {\XXint\displaystyle\textstyle{#1}}%
    {\XXint\textstyle\scriptstyle{#1}}%
    {\XXint\scriptstyle\scriptscriptstyle{#1}}%
    {\XXint\scriptscriptstyle\scriptscriptstyle{#1}}%
    \!\int}
\def\XXint#1#2#3{\setbox0=\hbox{$#1{#2#3}{\int}$}
    \vcenter{\hbox{$#2#3$}}\kern-0.5\wd0}
\def\bint{\Xint-}
\def\dashint{\Xint{\raise4pt\hbox to7pt{\hrulefill}}}
\def\XXiint#1#2#3{\setbox0=\hbox{$#1{#2#3}{\iint}$}
    \vcenter{\hbox{$#2#3$}}\kern-0.5\wd0}
\newenvironment{todorev}{\color{cyan}}{\color{black}}
\newcommand{\btodo}{\begin{todorev}}
\newcommand{\etodo}{\end{todorev}}
\newenvironment{samuelerev}{\color{purple}}{\color{black}}
\newcommand{\bsamr}{\begin{samuelerev}}
\newcommand{\esamr}{\end{samuelerev}}
\begin{document}
\title[Global higher integrability and Hardy inequalities for double-phase functionals]{Global higher integrability and Hardy inequalities \\ for double-phase functionals under a capacity density condition}
%\date{\today}

\author[F. Bäuerlein]{Fabian Bäuerlein}
\address[F. Bäuerlein]{Universität Salzburg, Fachbereich Mathematik,
Hellbrunner Str. 34, 5020 Salzburg, Austria.}
\email{fabian.baeuerlein@plus.ac.at}

\author[S. Ricc\`o]{Samuele Ricc\`o}
\address[S. Ricc\`o]{TU Wien, Wiedner Hauptstraße 8-10/104, 1040 Vienna, Austria.}
\email{samuele.ricco@tuwien.ac.at}

\author[L. Schätzler]{Leah Schätzler}
\address[L. Schätzler]{Aalto University,
Department of Mathematics and Systems Analysis,
P.O.~Box 11100,
FI-00076 Aalto,
Finland}
\email{ext-leah.schatzler@aalto.fi}

%*********************************************************************

\begin{abstract}
    We prove global higher integrability for functionals of double-phase type under a uniform local capacity density condition on the complement of the considered domain $\Omega \subset \R^n$.
    In this context, we investigate a new natural notion of variational capacity associated to the double-phase integrand.
    Under the related fatness condition for the complement of $\Omega$, we establish an integral Hardy inequality.
    Further, we show that fatness of $\R^n \setminus \Omega$ is equivalent to a boundary Poincaré inequality, a pointwise Hardy inequality and to the local uniform $p$-fatness of $\R^n \setminus \Omega$.
    We provide a counterexample that shows that the expected Maz'ya type inequality--a key intermediate step toward global higher integrability--does not hold with the notion of capacity involving the double-phase functional itself.
    \\[0.5cm]
    {\it 2020 Mathematics Subject Classification:} 31B15, 35J66, 35J70, 49N60
    \\
    {\it Keywords and phrases:} double-phase functional, variational capacity, boundary Poincaré inequality, Hardy inequality, global higher integrability
\end{abstract}

\maketitle

%*********************************************************************

\section{Introduction}

In the present paper, we consider the global higher integrability of quasi-minimizers to functionals of double-phase type in a bounded and open set $\Omega\subset \R^n$.
To this end, not only the regularity of the boundary datum, but also the regularity of the underlying domain itself is crucial.
In the special case of minimizers of the $p$-energy
$$
    \int_\Omega |\nabla u|^p \,\dx,
$$
it is well-known that the uniform $p$-capacity density condition, also known as uniform $p$-fatness, of the complement $\R^n \setminus \Omega$ is optimal, see \cite{KK94}.
Thus, the question arises what the optimal condition on $\Omega$ is if the integrand in the preceding functional is replaced by the double-phase integrand
\begin{equation}
    \varphi(x,t) = t^p + a(x) t^q,
    \label{eq:integrand}
\end{equation}
where $1 < p< q < +\infty$, and the modulating coefficient $a\colon \R^n \to \R_{\geq 0}$ satisfies
\begin{equation}\label{as:a}
    a\in C^{0,\alpha}(\R^n) \qquad\text{for some}\quad \alpha \in (0,1].
\end{equation}
Starting from \cite{M89}, \textsc{P. Marcellini} developed the theory for functionals with non-standard growth, also known as $(p,q)$-growth, while in \cite{Z87} \textsc{V.V. Zhikov} studied minimizers of such functionals as models of anisotropic materials. In certain models, the assumption of a continuous anisotropy change is reasonable, for example in electrorheological fluid dynamics where the anisotropy depends on the smooth electrical field, see \cite{ruzicka}. This is modeled by functionals with a variable-exponent growth, namely $F(x,t) \approx t^{p(x)}$ where the function $p(\cdot)$ is continuous. Other models, such as composite materials, are instead described better by clear-cut transitions and, to this end, \textsc{M.~Colombo \& G.~Mingione} in \cite{CM15_2} developed the regularity theory of the double-phase functional (see also \cite{BCM15_1, BCM15_2, BCM18, CM15_1}). Indeed, the integrand defined as in \eqref{eq:integrand} has the property that the growth rate changes harshly from $p$ to $q$ depending on whether the modulating coefficient $a$ defined as in \eqref{as:a} is zero. From there, an extensive literature has been produced both regarding the double-phase functional, see \cite{BCO18, BO17, BO20, BRS18, BY18}, and integrands with non-standard growth, see for example \cite{BS24, CMMP23, EMMP21, EP24} for results regarding Lipschitz regularity, \cite{HHT17, HO22, K21, Ok2017} for higher regularity results, \cite{O18} for partial H\"older regularity of elliptic systems, and \cite{BR20, BR21, G19} for the case of obstacle problems with non-standard growth.
In particular, it turned out that the condition
\begin{equation}
	\frac{q}{p} \leq 1 + \frac{\alpha}{n}.
	\label{eq:gap-exponents}
\end{equation}
on the gap between the exponents $p$ and $q$ is crucial for the Hölder continuity of minimizers of the double-phase functional and regularity results concerning their gradients, see \cite{BCM18,ELM04,FMM04}.
Concerning global higher integrability, for the case of variable exponents we refer to \cite{EHL11, EHL13}.
In the more delicate case of the double-phase integrand, \textsc{S.-S.~Byun \& J.~Oh} in \cite{BO17} were able to deal with Reifenberg flat domains.
Further, based on the local result \cite{HHK18}, \textsc{A.~Karppinen} in \cite{Karp21} obtained a global higher integrability result for an obstacle problem under the assumption of a measure density condition on the boundary. 
However, the question remains whether it can be generalized to a capacity density condition involving a notion of capacity that is adapted to the double-phase integrand.
Note that Orlicz space capacities in the $x$-independent case go back at least as far as \cite{AB94}, see also \cite{AH03,K00}, and relative capacities for generalized $\Phi$-functions such as the double-phase integrand are already in use in the literature, see \cite{BHH18}, and were inspired by the definitions of capacities for variable exponents, see \cite{DHHR11}.
A partial answer to the main question above and related observations regarding such a notion of capacity are given in the present paper.

Indeed, in the special case of the $p$-energy, a key step toward global higher integrability is the self-improving property of the uniform $p$-fatness condition.
In the Euclidean case, it follows by setting $\alpha=1$ in the more general result in \cite{L88} concerning $(\alpha, p)$-fatness conditions related to Riesz capacities.
Another proof for the self-improvement of uniform $p$-fatness in (weighted) $\R^n$ was given in \cite{Mikkonen}, and more general metric spaces were treated in \cite{BMS01} and \cite{LTV17}. For the case of variable exponents, we refer to \cite{Mikkonen} and \cite{EHL11}, while in the case of certain $\log$-scale distortions of $L^n$, an analogous result was proven in \cite{BK04}.
In particular, we point out that the proof by \textsc{J. Lehrbäck, H. Tuominen \& A. Vähäkangas} \cite{LTV17} relies on the equivalence of uniform $p$-fatness with boundary Poincaré inequalities and pointwise Hardy inequalities, and the corresponding self-improving property of the boundary Poincaré inequalities; for an adaptation to the Euclidean setting we refer to the monograph \cite{KLV21} by \textsc{J. Kinnunen, J. Lehrbäck \& A. Vähäkangas}.
Therefore, we became interested in the connection between these types of inequalities for the double-phase integrand and related capacity density conditions on the complement of $\Omega$.
In this context, a Maz'ya type inequality is a crucial intermediate step.
The expected statement is
$$
	\bint_{B(z,r)} \varphi(x,|u|) \,\dx
	\leq
	\frac{C}{\cp_\varphi\left(\{u=0\} \cap \overline{B\left(z,\frac{r}{2}\right)}, B(z,r)\right)}
	\int_{B(z,r)} \varphi(x,|\nabla u|) \,\dx
$$
with a notion of capacity $\cp_\varphi$ that is adapted to the double-phase integrand.
Somewhat surprisingly we found a counterexample that shows that the preceding formula does not hold with the notion of capacity introduced in \cite{BHH18}, see Example \ref{rem:counterexample-Mazya}.

This led us to introduce the concepts of infimal variational $\varphi$-capacity (see Definition \ref{def_var_cap}) and infimal variational $\varphi$-fatness (see Definition \ref{def:inf-capacity-density}) of sets.
They are based on the notions of level-$t$- and infimal capacity used by \textsc{S.~M.~Buckley \& P.~Koskela} in \cite{BK04} in the $x$-independent case.
First, for a general open set $O \subset \R^n$, we set
$$
    a_O^- := \inf_{x \in O} a(x)
    \quad \text{and} \quad
    a_O^+ := \sup_{x \in O} a(x)
$$
and we let
$$
    \varphi_O^-(t) := t^p  + a_O^- t^q
    \quad \text{and} \quad
    \varphi_O^+(t) := t^p  + a_O^+ t^q.
$$
For an open ball $O=B(z,r)$ with center $z \in \R^n$ and radius $r>0$, we will write $a_{z,r}^\pm := a_{B(z,r)}^\pm$ and $\varphi_{z,r}^\pm := \varphi_{B(z,r)}^\pm$.
In the $x$-independent case $a \equiv a_o \geq 0$ the following definition coincides with the respective notion in \cite{BK04}.

\begin{definition}
\label{def_var_cap}
    Let $E$ be a compact subset of an open set $O \subset \R^n$ and consider a level $t>0$. Then the variational \emph{level-$t$-capacity} associated with $\varphi$ is given by
    $$
        \cp_\varphi^t(E;O)
        :=
        \inf \left\{
        \int_O \frac{\varphi(x,|\nabla u|)}{\varphi_O^-(t)} \,\dx  :
        u \in \Lip(\overline{O}),
        u \geq t\chi_E,
        \left.u\right|_{\partial O} = 0
        \right\}
    $$
    and the \emph{infimal $\varphi$-capacity} of $E$ in $\Omega$ is given by
    $$
        \cp_\varphi^{\inf}(E;O)
        :=
        \inf_{t>0} \cp_\varphi^t(E;O).
    $$
\end{definition}

At this stage, we define local infimal $\varphi$-fatness analogously to \cite{BK04}. Note that in the definition we use the auxiliary functions $\widetilde{\varphi}_{z,r}$, but later in Remark \ref{rem:formulations-fatness}, we prove that the following definition is equivalent to the standard formulation of fatness given in \eqref{eq:KLV-capacity-density}.

\begin{definition}
\label{def:inf-capacity-density}
A set $E \subset \R^n$ is called \emph{locally infimally $\varphi$-fat} at a point $z\in E$ if there exist a constant $c_{\mathrm{fat}}$ and a radius $0<r_o \le 1$ such that
\begin{equation}
    \cp_{\widetilde{\varphi}_{z,r}}^{\inf}\Big(\Big\{x \in \overline{B\big(0,\tfrac{1}{2}\big)} : z+rx \in E\Big\}; B(0,1)\Big)
    \geq
    c_{\mathrm{fat}}
\label{eq:inf-capacity-density}
\end{equation}
for any radius $0<r<r_o$, where $\widetilde{\varphi}_{z,r} \colon B(0,1) \times [0,+\infty) \to [0,+\infty)$ is defined by
\begin{equation*}
    \widetilde{\varphi}_{z,r}(x,t) := \varphi(z+rx,t) = t^p + a(z+rx) t^q.
\end{equation*}
If there is a set $S\subset E$ such that $E$ is locally infimally $\varphi$-fat at all $z\in S$ with the same parameters $c_{\mathrm{fat}}$ and $r_o$, we call $E$ locally uniformly infimally $\varphi$-fat at $S$. Whenever $S=E$, we simply refer to $E$ as \emph{locally uniformly infimally $\varphi$-fat}. Furthermore, we use the notation \emph{$p$-fat} instead of infimally $\varphi$-fat whenever $\varphi=|\cdot|^p$.
\end{definition}

In Section \ref{sec:self-improv} we prove that the local infimal $\varphi$-fatness condition is self-improving.
Further, our first main result allows to use the fatness condition on the complement of the domain $\Omega$ in order to prove a Hardy inequality in integral form. The special case of this result for power functions was first proven for $p = 2$ in \cite{A86}, and it was generalized for $p \ge 1$ in \cite{L88, W90}. For Lipschitz domains, there are significantly more precise results, see for example \cite{C99}. Moreover, in \cite{BK04,BHS13} similar results were proven in the case of certain log-scale distortions of $L^n$. 
 
Throughout the paper, $\Lip_0(\Omega)$ denotes the space of Lipschitz functions $u \in \Lip(\Omega)$ with compact support $\spt(u) \Subset \Omega$.
Moreover, as usual, in the following we will denote the Hölder constant of $a$ by
\begin{equation*}
    [a]_{0;\alpha;O} := \sup_{x,y \in O, x \ne y} \frac{|a(x) - a(y)|}{|x-y|^\alpha}, \quad \textnormal{and} \quad [a]_{0;\alpha} := [a]_{0;\alpha;\R^n}.
\end{equation*}

\begin{theorem}[Integral Hardy Inequality]
\label{thm:int_har_ineq}
Let $\varphi$ be defined according to \eqref{eq:integrand} such that \eqref{as:a} and \eqref{eq:gap-exponents} hold with $1<p<q<+\infty$ and assume that $\Omega \subset \R^n$ is a bounded open set such that its complement $\R^n \setminus \Omega$ satisfies the fatness condition \eqref{eq:inf-capacity-density} with a radius $0<r_o \leq 1$ and fatness constant $c_\mathrm{fat} > 0$.
Then, there exists a constant $c = c(n,p,q,r_o,c_{\mathrm{fat}})$ such that the Hardy inequality in integral form
\begin{equation*}
\begin{aligned}
	\int_\Omega \varphi&\left(x,\frac{|u(x)|}{\dist(x,\partial\Omega)}\right) \,\dx \\
	\leq
	& \ c \left(1+ \diam(\Omega)^{\alpha + \frac{n(p-q)}{p}} [a]_{0;\alpha} \|\nabla u\|_{L^p(\Omega)}^{q-p} \right)^\frac{p}{p-1}
	\int_\Omega \varphi(x,|\nabla u(x)|) \,\dx
\end{aligned}
\end{equation*}
holds true for any $u \in \Lip_0(\Omega)$.
\qed
\end{theorem}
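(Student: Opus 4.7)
The plan is to follow the classical Lewis--Maz'ya approach: combine a Whitney-type decomposition of $\Omega$ with a local capacitary inequality on balls touching $\partial\Omega$, and then sum up with bounded overlap. First I would cover $\Omega$ by Whitney balls $B_i = B(x_i, r_i)$ with $r_i \sim \dist(x_i, \partial\Omega)$, pick boundary points $z_i \in \partial\Omega$ with $|x_i - z_i| \sim r_i$, and enlarge each $B_i$ to $B_i^\ast = B(z_i, R_i)$ with $R_i = c r_i$, so that on $B_i$ the weight $\dist(x,\partial\Omega)^{-1}$ is comparable to $R_i^{-1}$, while $\R^n \setminus \Omega$ occupies a capacitarily nontrivial portion of $\overline{B(z_i, R_i/2)}$. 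Since $u \in \Lip_0(\Omega)$ vanishes on $\R^n\setminus\Omega$, the fatness hypothesis \eqref{eq:inf-capacity-density} applied at $z_i$ then provides a uniform lower bound $\geq c_{\mathrm{fat}}$ for the infimal $\widetilde{\varphi}_{z_i,R_i}$-capacity of $\{u=0\} \cap \overline{B(z_i, R_i/2)}$ inside $B_i^\ast$.

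Next, on each $B_i^\ast$ I would invoke a Maz'ya-type inequality adapted to the infimal $\varphi$-capacity (which I assume is established earlier in the paper, precisely the motivation for introducing $\cp_\varphi^{\inf}$ in Definition~\ref{def_var_cap} to circumvent the counterexample in Example~\ref{rem:counterexample-Mazya}). After the normalization $x = z_i + R_i y$, and combined with the capacity lower bound coming from fatness, this should yield
\begin{equation*}
    \int_{B_i} \varphi\Bigl(x, \tfrac{|u|}{\dist(x,\partial\Omega)}\Bigr) \,\dx
    \leq c(n,p,q,c_{\mathrm{fat}}) \int_{B_i^\ast} \varphi(x, |\nabla u|) \,\dx.
\end{equation*}
Summing over $i$ and using the bounded overlap of $\{B_i^\ast\}$ then produces the global bound, modulo the multiplicative correction factor appearing in the statement.

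The hard part will be the transition from the Maz'ya inequality for the shifted integrand $\widetilde{\varphi}_{z_i, R_i}$, whose coefficient is $a(z_i + R_i y)$, to an honest estimate for the original $\varphi$. At this step the H\"older continuity of $a$ enters through the oscillation bound $\osc_{B_i^\ast} a \leq [a]_{0;\alpha} R_i^\alpha$, which acts on the $q$-term and has to be absorbed into the $p$-energy. This is precisely where the gap condition \eqref{eq:gap-exponents} becomes indispensable: via a Sobolev--Poincar\'e step in the spirit of Colombo--Mingione~\cite{CM15_1,CM15_2}, a term of the form $R_i^\alpha \int_{B_i^\ast} |u|^q\,\dx$ can be controlled by $\|\nabla u\|_{L^p(\Omega)}^{q-p}$ times the local $p$-part, with the universal prefactor $\diam(\Omega)^{\alpha + n(p-q)/p}[a]_{0;\alpha}$ appearing once one sums in $i$. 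The exponent $p/(p-1) = p'$ in the final constant will then emerge from a concluding Young-type rearrangement used to separate the gradient contributions on the two sides of the inequality.
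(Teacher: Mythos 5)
Your high‑level skeleton (Whitney decomposition near $\partial\Omega$, enlarged balls centered on the boundary, local capacitary estimate, bounded overlap) matches the paper's strategy, but the proposal contains a genuine misconception and a genuine gap.

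\textbf{The Maz'ya inequality used is not the one you assume.} You write that you would invoke ``a Maz'ya-type inequality adapted to the infimal $\varphi$-capacity \dots which I assume is established earlier in the paper,'' but no such statement is proved. The infimal $\varphi$-capacity is introduced only to formulate the fatness hypothesis; its actual role in the proof is merely to produce, via the comparison $\cp_\varphi^{\inf}(E;O)\le \cp_p(E;O)$ of Lemma~\ref{lem:upper-estimate-infcap}, a uniform lower bound $\cp_p\ge c_{\mathrm{fat}}$ on the \emph{classical} $p$-capacity of the rescaled zero set. The Maz'ya-type inequality actually used (Lemma~\ref{lem:protoMazya-type-ineq}) has $\cp_m$ with $m=p$ in the denominator, not $\cp_\varphi^{\inf}$; it is just the classical estimate applied twice with exponents $p$ and $q$. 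Moreover, Example~\ref{rem:counterexample-Mazya} shows that a pure-constant Maz'ya bound $\bint\varphi(|u|)\le C\,\cp^{-1}\int\varphi(|\nabla u|)$ cannot hold for a capacity essentially larger than $\cp_p$; and even Lemma~\ref{lem:protoMazya-type-ineq} carries a nontrivial factor $1+a^\pm\cdot\cp^{(p-q)/m}\cdot(\nabla u)^{q-p}$. So your claimed local estimate $\int_{B_i}\varphi(x,|u|/\dist)\le c\int_{B_i^*}\varphi(x,|\nabla u|)$ with a pure constant $c(n,p,q,c_{\mathrm{fat}})$ is not available as stated.

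\textbf{The absorption step that produces the $p/(p-1)$ power is missing.} The paper does not estimate $\varphi(x,|u|/d)$ directly. Instead it first proves, for a free parameter $0<\beta<1$, an inequality for $\varphi(x,|u|/d^{1+\beta})$ in terms of $\beta^{-1}\int_\Omega\varphi(x,|\nabla u|/d^\beta)$. Applying that to $v:=d^\beta u$ (so that $|v|/d^{1+\beta}=|u|/d$ and $|\nabla v|/d^\beta\lesssim \beta|u|/d+|\nabla u|$) then reproduces $\int\varphi(x,|u|/d)$ on the right with a prefactor $\beta^{p-1}$, plus a term $\beta^{-1}\int\varphi(x,|\nabla u|)$; at this point the $p$-Hardy inequality of Lemma~\ref{lem:p-Hardy} is used to bound $\|\beta u/d\|_{L^p}$ by $\beta\|\nabla u\|_{L^p}$. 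Choosing $\beta\sim \bigl(1+[a]_{0;\alpha}\|\nabla u\|_{L^p}^{q-p}\bigr)^{-1/(p-1)}$ absorbs the first term into the left-hand side, and $\beta^{-1}$ is exactly what yields the power $p/(p-1)$. A ``Young-type rearrangement'' alone would not give this; the iteration in $\beta$ (combined with a geometric-sum estimate across the Whitney scales, plus the $\varphi$-Poincar\'e inequality of Lemma~\ref{lem:poincare-zero-boundary-values} to handle the part of $\Omega$ far from the boundary) is where the stated dependence of the constant comes from.

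Without these two ingredients — the reduction to the classical $p$-capacity and the $(1+\beta)$-weighted self-improvement/absorption — the proposal does not produce the claimed inequality.
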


In order to state our second main result, we will denote the integral average of a function $u$ in a set $E$ by
$$
	(u)_E := \bint_E u \,\dx = \frac{1}{|E|} \int_E u \,\dx,
$$
and for an exponent $1 < m < +\infty$ we generalize this notation as
\begin{equation*}
    (u)_{E,m} := \left(\bint_E |u|^m \,\dx \right)^\frac{1}{m}.
\end{equation*}
Moreover, we need to introduce the restricted centered maximal operator, see \cite[Definition 1.27]{KLV21}.

\begin{definition}
\label{def:max_op}
Let $R \colon \R^n \to [0,+\infty]$ be a function. Then the \emph{restricted centered maximal operator} $M_R$ is defined as follows: for any $f \in L^1_\mathrm{loc}(\R^n)$, we have that $M_R f \colon \R^n \to [0,+\infty]$ is given by
\begin{equation*}
    M_R f(x)
    :=
    \begin{dcases}
        \quad |f(x)| & \text{if } R(x) =0, \\
        \sup_{0<r<R(x)} \bint_{B(x,r)} |f(y)| \,\dy & \text{if } R(x) >0.
    \end{dcases}
\end{equation*}
\end{definition}

Our second main result shows that the local $p$-fatness condition, the local infimal $\varphi$-fatness condition \eqref{eq:inf-capacity-density}, a boundary Poincaré inequality, a pointwise Hardy inequality, and a mean value version of the pointwise Hardy inequality are all equivalent; we also refer to Remark \ref{rmk:phases_equiv} for further clarification. In particular, it is surprising that the boundary Poincaré inequality and the pointwise Hardy inequalities, as stated in Theorem \ref{thm-equivalence} and Remark \ref{rmk:phases_equiv}, are already equivalent to the local $p$-fatness condition.

Pointwise Hardy inequalities were introduced in \cite{H99} and \cite{KM97}, where it was already shown that uniform $p$-fatness of the complement guarantees that the domain admits the pointwise $p$-Hardy inequality. In \cite{KLT11} the converse, and thus the equivalence of the two was proven, see also \cite[Theorem 6.23]{KLV21}. In contrast, already in the case of power functions, the integral version of the Hardy inequality is only equivalent to the local uniform $p$-fatness condition if $p = n$.

\begin{theorem}
\label{thm-equivalence}
    Let $\varphi$ be defined according to \eqref{eq:integrand} such that \eqref{as:a} and \eqref{eq:gap-exponents} hold with $1<p<q<+\infty$, assume that $\Omega\subset \R^n$ is an open set, and let $0<r_o \le 1$ and $c_\mathrm{fat}>0$. Then, the following conditions are equivalent:
    \begin{enumerate}
        \item \label{thm:equivalence-0}
        $\R^n \setminus \Omega$ is locally $p$-fat.
        \item \label{thm:equivalence-1}
        $\R^n \setminus \Omega$ satisfies the local infimal $\varphi$-capacity density condition \eqref{eq:inf-capacity-density} with $r_o$ and $c_\mathrm{fat}$.
        \item \label{thm:equivalence-2}
        For any $z\in \R^n \setminus \Omega$, $y\in \R^n$, $0<r < r_o$ and $R>0$ and any $u\in \Lip_0(\Omega)$ we have that
        \begin{equation*}
            \bint_{B(z,r)} \varphi^\pm_{y,R}(|u|) \, \dx
            \leq
            c(n,p,q,c_\mathrm{fat}) \big( 1 + a^\pm_{y,R} (r\nabla u)_{B(z,r),p}^{q-p} \big)
            \bint_{B(z,r)} |r \nabla u|^p \,\dx.
        \end{equation*}
        \item \label{thm:equivalence-3}
        For any $z\in \Omega$ such that $r_z:=\dist(z,\partial\Omega)<r_o$, $y\in \R^n$, $R>0$ and $u\in \Lip_0(\Omega)$, we have that
        \begin{equation*}
             \varphi^\pm_{y,R}\big((u)_{B(z,r_z)} \big)
             \leq
             c(n,p,q,c_\mathrm{fat}) \big( 1 + a^\pm_{y,R} (r_z \nabla u)_{B(z,2r_z),p}^{q-p} \big)
            \bint_{B(z,2r_z)} |r_z \nabla u|^p \,\dx.
        \end{equation*}
        \item \label{thm:equivalence-4}
        For any $z\in \Omega$ such that $r_z:=\dist(z,\partial\Omega)<r_o$, $y\in \R^n$, $R>0$ and $u\in \Lip_0(\Omega)$, we have the pointwise Hardy inequality
        \begin{equation*}
            \varphi^\pm_{y,R}\big( u(z) \big)
            \leq
            c(n,p,q,c_\mathrm{fat}) \left[ 1 + a^\pm_{y,R} M_{2r_z}(|r_z\nabla u|^p )^{\frac{q-p}{p}}(z) \right] M_{2r_z}(|r_z\nabla u|^p )(z).
        \end{equation*}
    \end{enumerate}
    \qed
\end{theorem}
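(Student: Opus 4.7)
The plan is to establish the cycle $(1)\Rightarrow(3)\Rightarrow(4)\Rightarrow(5)\Rightarrow(1)$ together with the separate equivalence $(1)\Leftrightarrow(2)$. My guiding observation is that, thanks to the Hölder continuity of $a$ combined with the gap condition \eqref{eq:gap-exponents}, the $q$-term in $\varphi^\pm_{y,R}$ can always be absorbed via a Sobolev--Poincaré estimate, so every step reduces at its core to a classical $p$-argument.

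For $(1)\Leftrightarrow(2)$, I would note first that the substitution $v=u/t$ in the definition of $\cp_\varphi^t$ shows that $\cp_\varphi^t(E;O)\to\cp_p(E;O)$ as $t\to 0^+$, whence $\cp_\varphi^{\inf}\le\cp_p$ and $(2)\Rightarrow(1)$ follows. Conversely, on the unit ball Hölder's inequality yields $\cp_q(E;B(0,1))\ge C\,\cp_p(E;B(0,1))^{q/p}$, so local $p$-fatness forces $q$-fatness on $B(0,1)$; combining this with the convex-combination lower bound $\cp_\varphi^t\ge\min(\cp_p,\cp_q)$ (obtained from $\inf(f+g)\ge\inf f+\inf g$ inside the definition of $\cp_\varphi^t$) delivers $(1)\Rightarrow(2)$.

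The implication $(1)\Rightarrow(3)$ is the core new estimate. Local $p$-fatness together with the standard Maz'ya inequality provides $\bint_{B(z,r)}|u|^p\,\dx\le C\bint_{B(z,r)}|r\nabla u|^p\,\dx$ for $u\in\Lip_0(\Omega)$, $z\in\R^n\setminus\Omega$ and $r<r_o$, handling the $p$-part of $\varphi^\pm_{y,R}(|u|)$. For the $q$-part $a^\pm_{y,R}|u|^q$, the Sobolev--Poincaré inequality (available because \eqref{eq:gap-exponents} ensures $q\le p^\ast$) gives
\begin{equation*}
\bint_{B(z,r)}|u|^q\,\dx
\le C\,r^q\Big(\bint_{B(z,r)}|\nabla u|^p\,\dx\Big)^{q/p}
= C\,(r\nabla u)_{B(z,r),p}^{q-p}\bint_{B(z,r)}|r\nabla u|^p\,\dx,
\end{equation*}
which is precisely the factor appearing in (3). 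For $(3)\Rightarrow(4)$ I would pick a nearest boundary point $z'\in\partial\Omega$ with $|z-z'|=r_z$, apply (3) on $B(z',cr_z)$ with a dimensional $c$ chosen so that $B(z,r_z)\subset B(z',cr_z)\subset B(z,2r_z)$, and bound the left-hand side via Jensen's inequality $\varphi^\pm_{y,R}(|(u)_{B(z,r_z)}|)\le\bint_{B(z,r_z)}\varphi^\pm_{y,R}(|u|)\,\dx$, absorbing the doubling factors into the constant.

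For $(4)\Rightarrow(5)$ the plan is a telescoping argument à la Hajłasz: writing $u(z)$ as the telescoping sum of averages on the dyadic balls $B_k:=B(z,2^{-k}r_z)\subset\Omega$, I would dominate each interior term by $2^{-k}r_z\,M_{2r_z}(|\nabla u|^p)^{1/p}(z)$ via the classical $p$-Poincaré inequality on $B_k$ and invoke (4) on $(u)_{B_0}$; summing and exploiting the $\Delta_2$-property $\varphi^\pm(a+b)\le 2^{q-1}(\varphi^\pm(a)+\varphi^\pm(b))$ together with $M_{2r_z}(|\nabla u|)\le M_{2r_z}(|\nabla u|^p)^{1/p}$ then produces the pointwise form of (5). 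Finally, for $(5)\Rightarrow(1)$ I would apply (5) to $\lambda u$ and send $\lambda\to 0^+$: after dividing by $\lambda^p$, the $q$-contribution on the right-hand side vanishes and one is left with the pure pointwise $p$-Hardy inequality, which by the classical equivalence in \cite[Theorem 6.23]{KLV21} is equivalent to local $p$-fatness. The main technical obstacle I anticipate is the careful book-keeping of the dependence on $a^\pm_{y,R}$ throughout all steps and the exact ball-alignment in $(3)\Rightarrow(4)$ via the nearest-point construction, rather than any genuinely new geometric or analytic idea.
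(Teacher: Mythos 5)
Your overall scheme (cycle plus the separate equivalence $(1)\Leftrightarrow(2)$) is close in spirit to the paper's, but there is a concrete geometric gap in the step $(3)\Rightarrow(4)$ and a misattribution in $(1)\Rightarrow(3)$ that you should address.

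\textbf{The step $(3)\Rightarrow(4)$ cannot work as described.} You propose to choose a dimensional constant $c$ so that $B(z,r_z)\subset B(z',c\,r_z)\subset B(z,2r_z)$, where $z'\in\partial\Omega$ with $|z-z'|=r_z$. The first containment forces $c\geq 2$ (a point of $B(z,r_z)$ can be at distance up to $2r_z$ from $z'$), while the second forces $c\leq 1$ (a point of $B(z',c r_z)$ can be at distance up to $(c+1)r_z$ from $z$). No such $c$ exists. Moreover, even trying to fix this by using some $c\geq 2$ runs into the radius constraint: (3) is only hypothesised for balls of radius $<r_o$, and $c\,r_z$ may exceed $r_o$ when $r_z$ is close to $r_o$. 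The paper instead avoids the containment problem by decomposing
\[
|(u)_{B(z,r_z)}| \leq |(u)_{B(z,r_z)}-(u)_{B(z,2r_z)}| + |(u)_{B(z,2r_z)}-(u)_{B(w,r_z)}| + |(u)_{B(w,r_z)}|,
\]
controlling the two oscillation terms by classical $(p,p)$- and $(q,p)$-Poincar\'e inequalities and applying the boundary inequality (3) only to the last term on the ball $B(w,r_z)$ of radius $r_z<r_o$ centered at the boundary. You should adopt a decomposition of this type.

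\textbf{A smaller issue in $(1)\Rightarrow(3)$.} For the $q$-part you invoke ``the Sobolev--Poincar\'e inequality'' to obtain
$\bint_{B(z,r)}|u|^q\,\dx\leq C r^q\big(\bint_{B(z,r)}|\nabla u|^p\big)^{q/p}$.
The plain Sobolev--Poincar\'e inequality only controls $|u-(u)_{B(z,r)}|$; dropping the average term is precisely the Maz'ya-type refinement (the paper's Lemma \ref{lem:classical-Mazya} with exponent $s=q$), which requires the fatness of the zero set $\{u=0\}\cap\overline{B(z,r/2)}$. So the estimate you want is correct, but it comes from the Maz'ya inequality applied with $s=q$ rather than Sobolev--Poincar\'e. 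The paper streamlines both the $p$- and $q$-parts into a single application of Lemma \ref{lem:protoMazya-type-ineq} with $m=p$.

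\textbf{What is genuinely different and nicer.} Your closing step $(5)\Rightarrow(1)$ by applying (5) to $\lambda u$, dividing by $\lambda^p$, and sending $\lambda\to 0^+$ to extract the pure pointwise $p$-Hardy inequality (and then invoking the classical equivalence of \cite[Theorem 6.23]{KLV21}) is a clean and correct shortcut. The paper instead proves $(5)\Rightarrow(2)$ directly via a fairly lengthy Vitali covering argument, which also yields a concrete fatness constant depending only on $n,p,q$; your route gets $(1)$ quickly but passes through the black-box classical equivalence and Theorem \ref{thm:equivalence-p-fatness} to return to (2), so it is shorter at the cost of less explicit constant tracking. Your $(1)\Leftrightarrow(2)$ discussion is essentially the content of the paper's Lemmas \ref{lem:upper-estimate-infcap} and \ref{lem:lower-estimate-infcap}, though the claimed bound $\cp_\varphi^t\geq\min(\cp_p,\cp_q)$ is stated too loosely (one cannot pass $\int\min\{|\nabla u|^p,|\nabla u|^q\}\,\dx$ to $\min\{\int|\nabla u|^p,\int|\nabla u|^q\}$ directly; the paper splits on $\{|\nabla u|\geq 1\}$ and applies H\"older's inequality, which introduces the $|O|$-dependent scaling in Lemma \ref{lem:lower-estimate-infcap}).
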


Since the local uniform infimal $\varphi$-fatness is self-improving, see Section \ref{sec:self-improv}, at this stage it would be possible to prove global higher integrability for minimizers of the double-phase functional in $\Omega$ under this condition on $\R^n \setminus \Omega$.
However, we are able to deal with a weaker condition based on a combination of $p$- and $q$-fatness conditions for appropriate subsets of $\R^n \setminus \Omega$.
While it is probably still not optimal, our method is flexible enough to consider quasi-minimizers of general functionals of double-phase type
\begin{equation}
\label{Functional}
    \mathcal{F}\colon W^{1,p}(\Omega) \to [0,+\infty], \qquad \mathcal{F}[u;\Omega] := \int_\Omega F(x,u,\nabla u) \, \dx,
\end{equation}
where the integrand $F \colon \Omega\times \R \times \R^n \to \R$ is a Carathéodory function that satisfies the inequality 
\begin{equation}\label{Functional-growth}
    \nu \, \varphi(x,\xi) \leq F(x,u,\xi) \leq L\, \varphi(x,\xi)
    \qquad \forall \, (x,u,\xi)\in \Omega\times \R \times \R^n
\end{equation}
with constants $0<\nu\leq L$.
Instead of \eqref{eq:gap-exponents}, we impose the strict inequality
\begin{equation}
\label{eq:gap-exponents-strict}
    \frac{q}{p} < 1 + \frac{\alpha}{n}.
\end{equation}

\begin{remark}
\label{rmk:orlicz}
    In order for the class of functionals given by \eqref{Functional} to take only finite values, the right class of functions to consider is the Orlicz-Sobolev space, see \cite[Definition 3]{BHH18},
    \begin{equation*}
        W^{1,\varphi(\cdot)}(\Omega) := \left\{ f \in W^{1,1}_{\loc}(\Omega) \ : \ f, |\nabla f| \in L^{\varphi(\cdot)}(\Omega) \right\}.
    \end{equation*}
    The space $L^{\varphi(\cdot)}(\Omega)$ is called {\it generalized Orlicz space}, also known as Musielak--Orlicz space, see \cite[Definition 2]{BHH18}, and it is given by
    \begin{equation*}
        L^{\varphi(\cdot)}(\Omega) := \left\{ f \in L^0(\Omega) \ : \ \lim_{\lambda \to 0^+} \rho_{\varphi(\cdot)}(\lambda f) = 0 \right\},
    \end{equation*}
    where $L^0(\Omega)$ is the set of measurable functions $f \colon \Omega \to [-\infty, +\infty]$ and
    \begin{equation*}
        \rho_{\varphi(\cdot)}(f) := \int_\Omega \varphi(x, |f(x)|) \, \dx
    \end{equation*}
    is the modular for any chosen $f \in L^0(\Omega)$. Note that in general these spaces can be defined for a wide class of functions, called generalized $\Phi$-functions, and the double-phase integrand \eqref{eq:integrand} is one of them. In particular, for the double-phase function, the space $L^{\varphi(\cdot)}(\Omega)$ is a Banach space, see \cite[Theorem 2.3.13]{DHHR11}.
\end{remark}

Next, we define quasi-minimizers of $\mathcal{F}$ analogously to \cite[Definition 2.1]{Ok2017}.

\begin{definition}
\label{def:quasi-minimizer}
Assume that the functional $\mathcal{F}$ is given by \eqref{Functional} with an integrand satisfying \eqref{Functional-growth}.
A function $u \in W^{1,p}_\mathrm{loc}(\Omega)$ is a quasi-minimizer, or $C_Q$-minimizer, of $\mathcal{F}$ if $F(x,u,\nabla u) \in L^1_\mathrm{loc}(\Omega)$ and there exists a constant $C_Q>0$ such that for any $v \in W^{1,p}(\Omega)$ with $\spt(u-v) \subset \Omega$ there holds
\begin{equation}
    \mathcal{F}[u;\spt(u-v)]
    \leq
    C_Q \mathcal{F}[v;\spt(u-v)].
    \label{eq:quasi-minimizer}
\end{equation}
If $C_Q=1$, we say that $u$ is a minimizer of $\mathcal{F}$.
\end{definition}

We are now ready to state the global higher integrability result. In particular, our fatness assumption on a neighborhood of a boundary point $x$ depends on whether $a$ vanishes in $x$. If this is the case, we assume that $\R^n \setminus \Omega$ is locally uniformly $p$-fat around that point. Otherwise, $q$-fatness is sufficient. Such an assumption ensures that both fatness conditions are self-improving.
For a discussion of the optimality of the condition on $\Omega$, we refer to Remark \ref{rem:optimality}

\begin{theorem}[Global Higher Integrability]
\label{thm:higher_int}
Let $\varphi$ be defined according to \eqref{eq:integrand} such that \eqref{as:a} and \eqref{eq:gap-exponents-strict} hold with $1<p<q<+\infty$.
Let $\Omega \subset \R^n$ be a bounded open set such that $\R^n \setminus \Omega$ is locally uniformly $q$-fat with parameters $c_\mathrm{fat}>0$ and $r_o \in (0,1]$ in the sense of Definition \ref{def:inf-capacity-density}.
Moreover, if there is a point $x \in \partial\Omega$ such that $a(x)=0$, we additionally assume that there exists $r_a>0$ such that $\R^n \setminus \Omega$ is locally uniformly $p$-fat with parameters $c_\mathrm{fat}$ and $\hat{r}_o \in (0,\min\{r_o,r_a\}]$ at
\begin{equation*}
    P(r_a) := (\{x\in \partial\Omega \ : \ a(x)=0\} + B(0,r_a))\setminus \Omega.
\end{equation*}
Further, assume that the integrand $F$ of the functional $\mathcal{F}$ given by \eqref{Functional} satisfies \eqref{Functional-growth}, and that
\begin{equation*}
    u \in f + W^{1,\varphi(\cdot)}_0(\Omega)
    \qquad\text{with }\quad
    f\in W^{1,\varphi^{\sigma_0}(\cdot)}(\Omega),
\end{equation*}
where $\sigma_0>1$, is a $C_Q$-minimizer of $\mathcal{F}$ in the sense of Definition \ref{def:quasi-minimizer}.
Then, there exist $1<\sigma\leq \sigma_0$ and $c\geq 1$, both depending on the data $n,p,q,\alpha,[a]_{0,\alpha},\nu,L,c_\mathrm{fat},C_Q,\sigma_0$ and $\| \nabla u \|_{L^{p}(\Omega,\R^n)} + \| \nabla f \|_{L^{p}(\Omega,\R^n)}$, such that 
    \begin{align}\label{thm:higher_int-estimate}
        \bigg(\int_\Omega &\varphi^\sigma(x,|\nabla u| ) \, \dx \bigg)^{\frac{1}{\sigma}}
        \nonumber\\
        &\leq 
        c \left( 1 + \frac{\diam(\Omega)}{\varrho}\right)^n \Bigg[ \varrho^{\frac{(1-\sigma)n}{\sigma}} \int_{\Omega}  \varphi(x,|\nabla u|) \,\dx
        +\bigg( \int_{\Omega} \varphi^\sigma(x,|\nabla f|) \, \dx \bigg)^{\frac{1}{\sigma}}
        +
        a_\mathcal{P}^{-\frac{n}{p\alpha}}\Bigg],
    \end{align}
    where
    \begin{equation*}
        a_\mathcal{P}:= \begin{cases}
            \inf\limits_{x\in \partial\Omega\setminus P(r_a/2)} a(x) & \text{if }\partial\Omega\setminus P(r_a/2)\neq \emptyset,\\
            +\infty &\text{otherwise}
        \end{cases}
        \quad\text{and}\quad
        \varrho:= \min\left\{\hat{r}_0, \left(\frac{a_\mathcal{P}}{1+[a]_{0,\alpha}}\right)^{1/\alpha} \right\},
    \end{equation*}
    and we set $\hat{r}_0 = r_0$ provided $a>0$ on $\partial\Omega$. We use the convention $1/\infty = 0$.
    \qed
\end{theorem}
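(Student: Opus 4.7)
The plan is to prove a reverse-Hölder inequality for the modular energy $\varphi(x,|\nabla u|)$ on all sufficiently small balls — both interior and boundary-intersecting — and then apply a Gehring-type self-improving lemma to upgrade the $L^1$-integrability of $\varphi(x,|\nabla u|)$ to $L^\sigma$ for some $\sigma>1$. The new ingredients forced by the double-phase structure are twofold: first, one must restrict to scales smaller than $\varrho$ so that $\varphi(x,\cdot)$ is comparable to the frozen integrands $\varphi_{z,r}^{\pm}$ on $B(z,r)$ (this is where the Hölder continuity of $a$, the strict gap \eqref{eq:gap-exponents-strict}, and the definition of $\varrho$ all enter); second, for boundary balls one needs a boundary Poincaré inequality of double-phase type, supplied by the equivalence in Theorem \ref{thm-equivalence} applied to the appropriate fatness hypothesis.

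First I would handle the interior case. On balls $B(z,2r)\Subset\Omega$ with $r\leq \varrho$, a standard Caccioppoli inequality for $C_Q$-minimizers of $\mathcal{F}$ (a consequence of \eqref{Functional-growth}) gives
\begin{equation*}
    \int_{B(z,r)} \varphi(x,|\nabla u|)\,\dx
    \leq c\int_{B(z,2r)} \varphi\!\left(x,\tfrac{|u-(u)_{B(z,2r)}|}{r}\right)\dx + \text{forcing},
\end{equation*}
and combining with a Sobolev--Poincaré inequality for the double-phase integrand (valid under \eqref{eq:gap-exponents-strict}) produces a reverse-Hölder inequality for $\varphi(x,|\nabla u|)$ with exponent $\theta<1$ on the left and a forcing term controlled by $\varphi(x,|\nabla f|)$. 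Next I would treat the boundary case, which splits along the two fatness regimes. If $B(z,4r)$ meets $\partial\Omega$ at a point of $P(r_a/2)$, we use the local uniform $p$-fatness of $\R^n\setminus\Omega$ at $P(r_a)$ together with Theorem \ref{thm-equivalence}\eqref{thm:equivalence-3} to obtain a boundary Poincaré inequality adapted to $\varphi^\pm_{z,r}$; if instead $\partial\Omega\cap B(z,4r)\subset \partial\Omega\setminus P(r_a/2)$, then $a\geq a_\mathcal{P}>0$ on $B(z,2r)$ (by the choice of $\varrho$), so $\varphi(x,t)\simeq a_\mathcal{P} t^q + t^p$ on $B(z,2r)$, and the $q$-fatness hypothesis, again through Theorem \ref{thm-equivalence}, yields the corresponding boundary Poincaré inequality. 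In either case, using that $u-f\in W^{1,\varphi(\cdot)}_0(\Omega)$ can be extended by zero outside $\Omega$, one gets
\begin{equation*}
    \bint_{B(z,r)} \varphi\!\left(x,\tfrac{|u-f|}{r}\right)\dx
    \leq c\bint_{B(z,2r)} \varphi(x,|\nabla(u-f)|)\,\dx,
\end{equation*}
up to the multiplicative factor of Maz'ya--Hardy type appearing in Theorem \ref{thm:int_har_ineq}. Combining this with the Caccioppoli inequality applied to the comparison function $f$ yields a boundary reverse-Hölder inequality of the same form as in the interior case, with the extra forcing involving $\nabla f$.

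Once the reverse-Hölder inequality holds uniformly on all balls of radius $\leq \varrho$, I would finish by invoking a Gehring-type lemma. Here the self-improving nature of the two fatness assumptions — Section \ref{sec:self-improv} for infimal $\varphi$-fatness, and the classical results \cite{L88,Mikkonen} for $p$- and $q$-fatness — is essential: it produces the extra room in the exponents that Gehring requires. Applying Gehring locally on each ball of radius $\varrho$ and covering $\Omega$ by $\sim (\diam(\Omega)/\varrho)^n$ such balls gives the global estimate \eqref{thm:higher_int-estimate}; the factor $\varrho^{(1-\sigma)n/\sigma}$ arises from the covering argument and the scaling of the local constants, while $a_\mathcal{P}^{-n/(p\alpha)}$ is simply the explicit form of $\varrho^{-n/\sigma}$ dictated by the definition of $\varrho$.

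The main obstacle is organizing the boundary estimate in the mixed regime: for each boundary ball one must decide whether $p$- or $q$-fatness is the relevant hypothesis, ensure that the associated boundary Poincaré inequality uses parameters matching the integrand on that ball, and then track every constant so that the final estimate depends only on the stated data. Once the machinery of Theorems \ref{thm:int_har_ineq} and \ref{thm-equivalence} is in place this bookkeeping is tedious but not conceptually deep; the conceptual heart is the decision to encode the split into the definition of $\varrho$ and $a_\mathcal{P}$, which lets both regimes be treated by a single reverse-Hölder scheme.
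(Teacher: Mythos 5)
Your overall scaffolding---interior Caccioppoli plus double-phase Poincaré, boundary Caccioppoli plus a boundary estimate split according to whether the nearby boundary point lies in $P(r_a/2)$ or not, then Gehring and a covering argument of size $\sim(\diam(\Omega)/\varrho)^n$---matches the paper's strategy. But there is a genuine gap in the boundary step: you propose to obtain the boundary reverse-Hölder inequality from Theorem~\ref{thm-equivalence}\eqref{thm:equivalence-3}, the boundary Poincaré inequality equivalent to plain $p$-fatness (resp.\ $q$-fatness). That inequality has $\bint_{B}|r\nabla u|^{p}\,\dx$ on the right with the \emph{full} exponent $p$, so after combining with Caccioppoli you get an inequality with the same power of $\varphi(x,|\nabla u|)$ on both sides---no subcritical exponent $\theta<1$, and hence nothing for Gehring's lemma to self-improve. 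The paper avoids this by \emph{first} upgrading the $p$-fatness (resp.\ $q$-fatness) to $(p-\varepsilon)$-fatness (resp.\ $(q-\varepsilon)$-fatness) via Theorem~\ref{thm:pselfimproving2}, and \emph{then} applying the parameterized Maz'ya-type inequality of Lemma~\ref{lem:protoMazya-type-ineq} with a carefully chosen $m<p$ (satisfying \eqref{aux:thm1-choiceofm} or \eqref{aux:thm1-choiceofm2}); the subcritical exponent is $\theta_1=m/p<1$. You mention that the self-improvement "produces the extra room in the exponents that Gehring requires," which is the right instinct, but you place it after the reverse-Hölder step rather than inside it: the self-improved fatness must be fed into the Maz'ya lemma, not appealed to separately. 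Invoking Theorem~\ref{thm-equivalence} instead of Lemma~\ref{lem:protoMazya-type-ineq} does not give you the room.

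A second, smaller inaccuracy: you claim that $a_\mathcal{P}^{-n/(p\alpha)}$ is "simply the explicit form of $\varrho^{-n/\sigma}$." In the paper this additive term arises from a Young inequality in the $(p,q)$-phase boundary estimate (the step leading to \eqref{aux:thm1-4}), where one splits off $a_\mathcal{P}^{-p/(q-p)}\leq a_\mathcal{P}^{-n/(p\alpha)}$, not from the covering or the definition of $\varrho$. Finally, you should say where the choice of $m$ (hence $\theta_1$) interacts with the gap condition: the strict inequality \eqref{eq:gap-exponents-strict} is exactly what makes the range of admissible $m$ (cf.\ Remark~\ref{rmk:choiceofm}) nonempty while still satisfying $m\geq p\theta_0$, and without it the argument degenerates.
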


\begin{remark}
    Note that the dependence of the integrability exponent of Theorem \ref{thm:higher_int} on the gradient of $u$ is expected for double-phase equations, since it already occurs in the interior; see for instance \cite[Lemma 2.14]{Ok2017}.
    Further, tracking the constants in the proof of Gehring's lemma, i.e.~Lemma \ref{Gehring's Lemma}, shows that we may choose
    %we have for the dependence of $\sigma$ that it can be chosen such that
    \begin{equation*}
        \sigma=\min\left\{ \sigma_0 , 1 + \frac{1}{c\left(1+[a]_{0,\alpha}\big( \| \nabla u \|_{L^{p}(\Omega,\R^n)} + \| \nabla f \|_{L^{p}(\Omega,\R^n)}\big)^{\beta (q-p)}\right)} \right\}
    \end{equation*}
    for some constant $c=c(n,p,q,\alpha,\nu,L,c_\mathrm{fat},C_Q)\geq 1$ and $\beta=\beta(n,p,q,\alpha)\in (0,1)$.
    In particular, if $a$ is constant, then $\sigma$ does not depend on $\nabla u$.
\end{remark}

As a topic for future research, it would be interesting to investigate the notion of infimal $\varphi$-capacity for more general generalized $\Phi$-functions $\varphi$, see Remark \ref{rmk:orlicz}, and to extend our main results to this setting.
\\
The paper is organized as follows. In Section \ref{capacities} we prove some preliminary results regarding the infimal $\varphi$-capacity and the infimal $\varphi$-fatness condition. In particular, in Theorem \ref{thm:equivalence-p-fatness}, we show that the infimal $\varphi$-capacity density condition and the $p$-capacity density condition are equivalent. In Section \ref{sec:self-improv} we prove the self-improving property of this infimal $\varphi$-fatness, while in Section \ref{sec:aux} we collect auxiliary results already known in the literature that we need in the proofs of our main theorems. In Section \ref{sec:Mazya-type-ineq} we prove a Maz’ya type inequality for certain functionals with $(p, q)$-growth that will be necessary in the proofs of the first two main theorems.
The proof of Theorem \ref{thm:int_har_ineq} is given in Section \ref{sec:int-hardy}, and follows the overall strategy of the proof of \cite[Theorem 2.4]{BK04}.
Next, in Section \ref{proof_thm_eq}, we establish the remaining implications in Theorem \ref{thm-equivalence}.
To this end, we proceed as in \cite[Theorems 6.22 \& 6.23]{KLV21}.
At the end of the paper, we are concerned the application mentioned above, namely in Section \ref{sec:higher_int}, we prove the global higher integrability result in Theorem \ref{thm:higher_int}.

%**********************************************************

\bigskip
\textbf{Acknowledgments.}
This research was funded in part by the Austrian Science Fund (FWF) projects \href{https://doi.org/10.55776/F65    }{10.55776/F65} and \href{https://doi.org/10.55776/Y1292    }{10.55776/Y1292}.
This research was funded in whole or in part by the Austrian Science Fund (FWF) \href{https://doi.org/10.55776/J4853    }{10.55776/J4853}  and \href{https://doi.org/10.55776/P36295  }{10.55776/P36295}.
The author S.R.~would also like to thank the Department of Mathematics at Paris Lodron Universität  Salzburg for the hospitality during his visits.
All authors would also like to thank Univ.-Prof.~Dr.~Verena B\"ogelein for the helpful discussions.
For the purpose of open access, the authors have applied a CC BY public copyright license to any Author Accepted Manuscript (AAM) version arising from this submission.

\bigskip
\textbf{Data availability.} Data availability/data sharing is not applicable to this article as no datasets were generated or analyzed during the current study.

%*********************************************************************

\section{Preliminaries on infimal capacity and capacity density condition}
\label{capacities}

In this section we prove some preliminary results regarding the infimal $\varphi$-capacity and the infimal $\varphi$-fatness condition, with a particular focus on the equivalence between the different definitions of local infimal $\varphi$-fatness and local $p$-fatness.
\\
First, the following truncation lemma shows that it is enough to consider test functions with values in $[0,t]$ in the definition of the level-$t$-capacity.
\begin{lemma}
\label{lem:truncation}
Let $E$ be a compact subset of an open set $O \subset \R^n$ and $t>0$.
Then, we have that
$$
    \cp_\varphi^t(E;O)
    =
    \inf \left\{
    \int_O \frac{\varphi(x,|\nabla u|)}{\varphi^-_O(t)} \,\dx  :
    u \in \Lip(\overline{O}, [0,t]),
    u \geq t\chi_E,
    \left.u\right|_{\partial O} = 0
    \right\}.
$$
\end{lemma}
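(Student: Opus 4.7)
The inequality $\cp_\varphi^t(E;O)\leq(\text{right-hand side})$ is immediate, since every competitor in the restricted class $\Lip(\overline{O},[0,t])$ is in particular a competitor in Definition~\ref{def_var_cap}. For the reverse inequality, the plan is to show that any admissible competitor $u$ from the original definition can be replaced by a competitor $\tilde u\in\Lip(\overline{O},[0,t])$ satisfying the same constraints and whose energy does not exceed that of $u$; passing to the infimum will then close the gap.

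Given $u\in\Lip(\overline{O})$ with $u\geq t\chi_E$ and $u|_{\partial O}=0$, I would first observe that the pointwise constraint $u\geq t\chi_E\geq 0$ forces $u\geq 0$ throughout $\overline{O}$. It therefore suffices to cap $u$ from above: set
$$
    \tilde{u}(x) := \min\{u(x),t\}.
$$
By construction $\tilde u\in\Lip(\overline{O},[0,t])$, and since $u=0\leq t$ on $\partial O$ and $u\geq t$ on $E$, one also has $\tilde u|_{\partial O}=0$ and $\tilde u\geq t\chi_E$. Hence $\tilde u$ is admissible for the infimum on the right-hand side.

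The decisive pointwise estimate $|\nabla \tilde u|\leq|\nabla u|$ almost everywhere in $O$ follows from Stampacchia's theorem applied to the Lipschitz function $\tilde u=u-(u-t)_+$: on the open set $\{u<t\}$ one has $\tilde u=u$, hence $\nabla\tilde u=\nabla u$, while on $\{u\geq t\}$ the function $\tilde u$ is constant equal to $t$, so $\nabla\tilde u=0$ almost everywhere. Since $t\mapsto\varphi(x,t)$ is nondecreasing on $[0,+\infty)$, this yields $\varphi(x,|\nabla\tilde u|)\leq\varphi(x,|\nabla u|)$ a.e.\ in $O$, and hence
$$
    \int_O \frac{\varphi(x,|\nabla \tilde u|)}{\varphi_O^-(t)}\,\dx
    \leq
    \int_O \frac{\varphi(x,|\nabla u|)}{\varphi_O^-(t)}\,\dx.
$$
Taking the infimum over all admissible $u$ yields the reverse inequality and completes the argument. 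I do not anticipate a genuine obstacle here: the proof is a routine Lipschitz truncation, and the monotonicity of $\varphi(x,\cdot)$ in its second argument is built into the definition \eqref{eq:integrand}.
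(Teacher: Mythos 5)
Your proof is correct and takes essentially the same route as the paper: truncate an admissible competitor $u$ by $\tilde u=\min\{u,t\}$, note that $|\nabla\tilde u|\le|\nabla u|$ a.e., and invoke the monotonicity of $s\mapsto\varphi(x,s)$. The only cosmetic difference is that the paper runs the argument through a near-optimal $u_\varepsilon$ and lets $\varepsilon\to 0$, whereas you pass to the infimum directly; you also spell out explicitly that the constraint $u\ge t\chi_E$ already forces $u\ge 0$ on all of $\overline{O}$ (so truncation from above suffices), a point the paper leaves implicit.
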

\begin{proof}
On the one hand, by definition of $\cp_\varphi^t(E;O)$ it is clear that
\begin{align*}
    \cp_\varphi^t(E;O)
    \leq
    \inf \left\{
    \int_O \frac{\varphi(x,|\nabla u|)}{\varphi^-_O(t)} \,\dx  :
    u \in \Lip(\overline{O}, [0,t]),
    u \geq t\chi_E,
    \left.u\right|_{\partial O} = 0
    \right\}.
\end{align*}
On the other hand, assume that the family of admissible test functions for $\cp_\varphi^t(E;O)$, i.e.
$$
    \mathcal{A}^t_{E;O} :=
    \left\{
    u \in \Lip(\overline{O}),
    u \geq t\chi_E,
    \left.u\right|_{\partial O} = 0
    \right\},
$$
is not empty, fix $\varepsilon>0$ and choose $u_\varepsilon \in \mathcal{A}^t_{E;O}$ such that
$$
    \int_O \frac{\varphi(x,|\nabla u_\varepsilon|)}{\varphi^-_O(t)} \,\dx
    \leq
    \cp_\varphi^t(E;O) + \varepsilon.
$$
Observe that $u_{t,\varepsilon} := \min\{u_\varepsilon,t\} \in \Lip(\overline{O},[0,t])$ with $u_{t,\varepsilon} \geq t\chi_E$, $\left.u_{t,\varepsilon}\right|_{\partial O} = 0$ and $|\nabla u_{t,\varepsilon}| \leq |\nabla u_\varepsilon|$ (see for instance \cite[Theorem 2.25]{KLV21} for this property of truncated Sobolev functions).
Together with the fact that $s \mapsto \varphi(x,s)$ is increasing, this implies that
\begin{align*}
    \inf \left\{
    \int_O \frac{\varphi(x,|\nabla u|)}{\varphi^-_O(t)} \,\dx  :
    u \in \Lip(\overline{O}, [0,t]),
    u \geq t\chi_E,
    \left.u\right|_{\partial O} = 0
    \right\}
    &\leq
    \int_O \frac{\varphi(x,|\nabla u_{t,\varepsilon}|)}{\varphi^-_O(t)} \,\dx
    \\ &\leq
    \int_O \frac{\varphi(x,|\nabla u_\varepsilon|)}{\varphi^-_O(t)} \,\dx
    \\& \leq
    \cp_\varphi^t(E;O) + \varepsilon.
\end{align*}
Since $\varepsilon>0$ was arbitrary, this concludes the proof of the lemma.
\end{proof}

Next, we prove the following upper bound for the infimal $\varphi$-capacity.
\begin{lemma}
\label{lem:upper-estimate-infcap}
For any compact subset $E$ of an open set $O \subset \R^n$ and for any $1 < p< +\infty$, we have that
\begin{equation*}
	\cp_\varphi^{\inf}(E;O)
    \leq
    \cp_p(E;O).
\end{equation*}
\end{lemma}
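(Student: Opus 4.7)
The approach is to rescale admissible test functions for the $p$-capacity up to level $t$, obtaining admissible competitors for $\cp_\varphi^t(E;O)$, and then to exploit the gap $q>p$ by sending $t\to 0^+$.

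Given $\varepsilon>0$, I would first pick $v \in \Lip(\overline{O})$ admissible for $\cp_p(E;O)$ with $\int_O |\nabla v|^p \,\dx \leq \cp_p(E;O)+\varepsilon$; by a truncation argument entirely analogous to Lemma \ref{lem:truncation} (now applied to the $p$-energy), I may further assume that $v$ takes values in $[0,1]$. For any $t>0$, the rescaling $u_t := tv$ then lies in $\Lip(\overline{O},[0,t])$, satisfies $u_t\geq t\chi_E$ and $u_t|_{\partial O}=0$, and is therefore admissible for $\cp_\varphi^t(E;O)$.

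The key computation is the identity
\begin{equation*}
    \int_O \frac{\varphi(x,|\nabla u_t|)}{\varphi_O^-(t)} \,\dx
    =
    \frac{1}{1+a_O^-\, t^{q-p}} \int_O |\nabla v|^p \,\dx
    + \frac{t^{q-p}}{1+a_O^-\, t^{q-p}} \int_O a(x)\,|\nabla v|^q \,\dx,
\end{equation*}
which follows by factoring $t^p$ out of both numerator and denominator. Since $q>p$, the factor $t^{q-p}$ tends to $0$ as $t\to 0^+$, so the second summand vanishes in the limit while the first one converges to $\int_O |\nabla v|^p \,\dx$. Taking the infimum over $t>0$, then over admissible $v$, and finally letting $\varepsilon\to 0$, yields the desired bound $\cp_\varphi^{\inf}(E;O) \leq \cp_p(E;O)$.

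There is no real obstacle. The only point deserving a brief justification is that the second integral $\int_O a(x)|\nabla v|^q\,\dx$ is finite; this follows from the local boundedness of $a$ together with the essential boundedness of $|\nabla v|$ coming from $v \in \Lip(\overline{O})$ (restricting to compactly supported test functions if $O$ is unbounded, or using that $\overline{O}$ is compact otherwise). One could alternatively avoid computing the integrals separately and invoke dominated convergence directly on the ratio, but the explicit splitting above already makes the limit $t\to 0^+$ transparent.
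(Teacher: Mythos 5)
Your proof is correct and follows essentially the same route as the paper: pick an admissible test function for $\cp_p(E;O)$, rescale it to level $t$, observe that the $q$-contribution carries a factor $t^{q-p}$ that disappears as $t\to 0^+$, and take infima. The paper merely phrases the same step as the inequality $\int_O \varphi(x,|\nabla(tu)|)/\varphi_O^-(t)\,\dx \leq \int_O |\nabla u|^p\,\dx + t^{q-p}\int_O a|\nabla u|^q\,\dx$ followed by an infimum over $t$, rather than an identity followed by a limit, and does not bother with the (harmless but unnecessary) preliminary truncation of $v$ to $[0,1]$.
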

\begin{proof}
Note that $u$ is an admissible comparison function for $\cp_\varphi^1(E;O)$ if and only if $tu$ is an admissible comparison function for $\cp_\varphi^t(E;O)$, with $t > 0$. Thus, for an admissible function $u$ for $\cp_\varphi^1(E;O)$, we find that
\begin{align*}
	\int_O \frac{\varphi(x,|\nabla (tu)|)}{\varphi_O^-(t)} \,\dx
    &=
    \int_O \frac{t^p|\nabla u|^p + a t^q |\nabla u|^q}{t^p+ a_O^- t^q} \,\dx
    =
    \int_O \frac{|\nabla u|^p + a t^{q-p} |\nabla u|^q}{1+a_O^- t^{q-p}} \,\dx\\
    &\leq
    \int_O |\nabla u|^p \,\dx + t^{q-p} \int_O a|\nabla u|^q \,\dx.
\end{align*}
Hence, for any admissible $u$ for $\cp_\varphi^1(E;O)$, we conclude that
\begin{align*}
    \cp_\varphi^{\inf}(E;O)
    &=
    \inf_{t>0} \cp_\varphi^t(E;O) \\
    &\leq
    \inf_{t>0} \int_O \frac{\varphi(x,|\nabla (tu)|)}{\varphi_O^-(t)} \,\dx \\
    &\leq
    \inf_{t>0} \left(
    \int_O |\nabla u|^p \,\dx + t^{q-p} \int_O a |\nabla u|^q \,\dx
    \right) \\
    &=
    \int_O |\nabla u|^p \,\dx.
\end{align*}
Taking the infimum over all admissible test functions $u$ for $\cp_\varphi^1(E;O)$ (note that the left-hand side of the preceding inequality does not depend on $u$), we obtain the claim of the lemma.
\end{proof}

Further, we have the following lower bound.
\begin{lemma}
\label{lem:lower-estimate-infcap}
Let $E$ be a compact subset of a bounded, open set $O \subset \R^n$ and let $1 < p< q < +\infty$.
Then, for any $t>0$, we have that
\begin{equation*}
	\cp_\varphi^{\inf}(E;O)
	\geq
	c(p,q)
	\min\left\{ \cp_p(E;O), |O| \left( \frac{\cp_p(E;O)}{|O|} \right)^\frac{q}{p} \right\}.
\end{equation*}
\end{lemma}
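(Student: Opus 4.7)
My plan is to bound the capacity by testing with the same competitor used for $\cp_p(E;O)$ and then using the elementary inequality $\varphi(x,|\nabla u|) \ge |\nabla u|^p + a_O^- |\nabla u|^q$. The proof will reduce to a one-parameter minimisation problem in $t$, with the desired two-branch minimum on the right-hand side arising from a natural dichotomy between ``small'' and ``large'' capacities relative to $|O|$.

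First, fix $t>0$ and take an admissible $u$ for $\cp_\varphi^t(E;O)$; by Lemma \ref{lem:truncation} I may assume $u\in \Lip(\overline{O},[0,t])$. Setting $v:=u/t$ gives a competitor for $\cp_p(E;O)$, hence
\begin{equation*}
    I := \int_O |\nabla u|^p\,\dx \;\geq\; t^p\, \cp_p(E;O).
\end{equation*}
Applying Jensen's inequality with the convex function $s\mapsto s^{q/p}$ (which is legitimate since $q/p>1$) to the function $|\nabla u|^p$ yields
\begin{equation*}
    \int_O |\nabla u|^q \,\dx \;\geq\; |O|^{1-\frac{q}{p}}\, I^{q/p}.
\end{equation*}
Combining these with $a(x)\geq a_O^-$ and then dividing by $\varphi_O^-(t)=t^p(1+a_O^- t^{q-p})$, I obtain
\begin{equation*}
    \int_O \frac{\varphi(x,|\nabla u|)}{\varphi_O^-(t)}\,\dx
    \;\geq\; \kappa'\cdot \frac{1+s\,(\kappa'/|O|)^{q/p-1}}{1+s},
\end{equation*}
where $\kappa':=I/t^p\geq \cp_p(E;O)$ and $s:=a_O^- t^{q-p}\geq 0$.

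Since the right-hand side is monotone increasing in $\kappa'$ (its derivative in $\kappa'$ is manifestly positive), I may replace $\kappa'$ by $\kappa:=\cp_p(E;O)$, and then take the infimum in $t>0$, which (when $a_O^->0$) amounts to the infimum of $h(s):=(1+s\rho)/(1+s)$ over $s\ge 0$, with $\rho:=(\kappa/|O|)^{q/p-1}$. A direct computation of $h'$ shows that $h$ is monotone with $h(0)=1$ and $h(\infty)=\rho$, so $\inf_{s\ge 0}h(s)=\min\{1,\rho\}$. Multiplying back by $\kappa$, the two branches produce exactly $\cp_p(E;O)$ (when $\kappa\geq|O|$) and $|O|(\kappa/|O|)^{q/p}$ (when $\kappa<|O|$), giving the claimed bound with constant $c(p,q)=1$. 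The degenerate case $a_O^-=0$ forces $s=0$, for which the same estimate becomes $\cp_\varphi^{\inf}(E;O)\geq \cp_p(E;O)$ and is in particular consistent with the minimum.

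I do not expect a genuine obstacle here: the main subtlety is purely bookkeeping, namely making sure that the infimum in $\kappa'$ is taken on the correct side (which works because $g_s(\kappa')$ is increasing, so the constraint $\kappa'\geq \kappa$ is saturated at the lower endpoint), and that the $t\to\infty$ regime is allowed even though the infimum in $s$ need not be attained. Lemma \ref{lem:truncation} is convenient but inessential, since $u/t$ is admissible for the $p$-capacity regardless of truncation; I would still invoke it to parallel the style of Lemma \ref{lem:upper-estimate-infcap}.
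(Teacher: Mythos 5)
Your proof is correct and takes a genuinely different route from the paper's. The paper bounds the integrand pointwise: using that
$\frac{t^p|\nabla u|^p + a(x)t^q|\nabla u|^q}{t^p + a(x)t^q}$
is a convex combination of $|\nabla u|^p$ and $|\nabla u|^q$, it obtains a bound $\ge\min\{|\nabla u|^p,|\nabla u|^q\}$ that is \emph{uniform in $t$}; it then splits the domain of integration by the level set $\{|\nabla u|\ge 1\}$, applies Jensen only on $\{|\nabla u|<1\}$, and recombines the two pieces via the superadditivity of $s\mapsto s^{q/p}$, which is where the multiplicative constant $c(p,q)$ enters. You instead keep the $t$-dependence: you replace $a(x)$ by its infimum $a_O^-$, apply Jensen on all of $O$, push the competitor's $p$-energy down to $\cp_p(E;O)$ using monotonicity in $\kappa'$, and then minimize the explicit one-variable function $h(s)=(1+s\rho)/(1+s)$ over $s=a_O^-t^{q-p}$. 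This yields the same two-branch bound but with constant $1$; the degenerate case $a_O^-=0$ is handled separately and trivially. What the paper's approach buys is a pointwise estimate independent of $t$, which avoids any discussion of whether the infimum in $s$ is attained and of the sign of $a_O^-$; what yours buys is a sharper constant and a cleaner identification of where the two branches come from, namely the competition between the $t\to 0$ and $t\to\infty$ regimes in the definition of $\cp_\varphi^{\inf}$. Both are sound.
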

\begin{proof}
Assume that $u \in \Lip(\overline{O})$ with $u=0$ on $\partial O$ and $u \geq \chi_E$.
Since for any fixed $x \in O$ and $t>0$
$$
	\frac{1}{1+a(x)t^{q-p}} |\nabla u|^p + \frac{a(x) t^{q-p}}{1+ a(x) t^{q-p}} |\nabla u|^q
$$
is a convex combination of $|\nabla u|^p$ and $|\nabla u|^q$, we obtain that
\begin{align}
	\int_{O} \frac{\varphi(x,|\nabla (tu)|)}{\varphi_O^-(t)} \,\dx
    &\geq
    \int_{O} \frac{t^p|\nabla u|^p + a t^q |\nabla u|^q}{t^p+ a t^q} \,\dx
    \nonumber \\
    &=
    \int_{O} \left[ \frac{1}{1+at^{q-p}} |\nabla u|^p + \frac{a t^{q-p}}{1+ a t^{q-p}} |\nabla u|^q \right] \dx
    \nonumber \\
    &\geq
    \int_{O} \min\{ |\nabla u|^p, |\nabla u|^q \} \,\dx
    \nonumber \\
    &=
    \int_{O} \left[ |\nabla u|^p \chi_{\{|\nabla u| \geq 1\}}
    + |\nabla u|^q \chi_{\{|\nabla u| < 1\}} \right] \dx.
    \label{eq:lower-bound-aux1}
\end{align}
By Hölder's inequality, we find that
$$
	\int_{O} |\nabla u|^q \chi_{\{|\nabla u| < 1\}} \,\dx
	\geq
	|O|^{1-\frac{q}{p}}
	\left( \int_{O} |\nabla u|^p \chi_{\{|\nabla u| < 1\}} \,\dx \right)^\frac{q}{p}.
$$
Inserting this into the right-hand side of \eqref{eq:lower-bound-aux1}, we infer
\begin{align*}
	\int_{O} \frac{\varphi(x,|\nabla (tu)|)}{\varphi_O^-(t)} \,\dx
    &\geq
    \frac{\left( \int_{O} |\nabla u|^p \chi_{\{|\nabla u| \geq 1\}} \,\dx \right)^\frac{q}{p}}{\left( \int_{O} |\nabla u|^p \chi_{\{|\nabla u| \geq 1\}} \,\dx \right)^{\frac{q}{p}-1}}
    + \frac{\left( \int_{O} |\nabla u|^p \chi_{\{|\nabla u| < 1\}} \,\dx \right)^\frac{q}{p}}{|O|^{\frac{q}{p}-1}} \\
    &\geq
    \frac{c(p,q) \left( \int_{O} |\nabla u|^p \,\dx \right)^\frac{q}{p}}{\max\left\{ \left( \int_{O} |\nabla u|^p \chi_{\{|\nabla u| \geq 1\}} \,\dx \right)^{\frac{q}{p}-1}, |O|^{\frac{q}{p}-1} \right\}} \\
    &\geq
    \frac{c(p,q) \left( \int_{O} |\nabla u|^p \,\dx \right)^\frac{q}{p}}{\max\left\{ \left( \int_{O} |\nabla u|^p \,\dx \right)^{\frac{q}{p}-1}, |O|^{\frac{q}{p}-1} \right\}} \\
    &=
    c(p,q) \min\left\{ \int_{O} |\nabla u|^p \,\dx,
    |O| \left( \frac{\int_{O} |\nabla u|^p \,\dx}{|O|} \right)^\frac{q}{p} \right\}\\
    &\geq
    c(p,q) \min\left\{ \cp_p(E;O),
    |O| \left( \frac{\cp_p(E;O)}{|O|} \right)^\frac{q}{p} \right\}.
\end{align*}
Taking the infimum over all admissible functions in the definition of $\cp_\varphi^t(E;O)$ on the left-hand side of the preceding inequality and thereafter taking the infimum over $t>0$ yields the claim of the lemma. 
\end{proof}

In Definition \ref{def:inf-capacity-density}, we defined the infimal $\varphi$-fatness using the auxiliary functions $\widetilde{\varphi}_{z,r}$. In the next remark we prove that definition is equivalent to the standard condition for fatness, in particular in our case the infimal $\varphi$-fatness, namely \eqref{eq:KLV-capacity-density}.

\begin{remark}
\label{rem:formulations-fatness}
In \cite{KLV21}, it is said that a set $E \subset \R^n$ satisfies a uniform $p$-capacity density condition if there exists $\hat{c}_\mathrm{fat}>0$ such that
$$
	\cp_p\Big(E \cap \overline{B\big(z,\tfrac{r}{2}\big)} ; B(z,r)\Big)
	\geq
	\hat{c}_{\mathrm{fat}}
	\cp_p\Big(\overline{B\big(z,\tfrac{r}{2}\big)} ; B(z,r)\Big)
$$
holds for any radius $r>0$ and any point $z \in E$.
Thus, for a compact set $E$ we would like to check that \eqref{eq:inf-capacity-density} is equivalent to the condition
\begin{equation}
	\cp_\varphi^{\inf}\Big(E \cap \overline{B\big(z,\tfrac{r}{2}\big)} ; B(z,r)\Big)
	\geq
	\tilde{c}_{\mathrm{fat}}
	\cp_\varphi^{\inf}\Big(\overline{B\big(z,\tfrac{r}{2}\big)} ; B(z,r)\Big)
	\label{eq:KLV-capacity-density}
\end{equation}
for any $0<r<\tilde{r}_o \le 1$ and $z \in E$.

To this end, note that $u \in \Lip(B(0,1))$ if and only if $\tilde{u} = u\left(\frac{\cdot - z}{r}\right) \in \Lip(B(z,r))$ with $\left.u\right|_{\partial B(0,1)}=0$ if and only if $\left.\tilde{u}\right|_{\partial B(z,r)}=0$.
Moreover, we have that
$$
	\Big\{x \in \overline{B\big(0,\tfrac{1}{2}\big)} : z+rx \in E\Big\}
	=
	\Big\{\tfrac{y-z}{r} : y \in E \cap \overline{B\big(z,\tfrac{r}{2}\big)}\Big\}
$$
and hence that
$$
	u \geq t\chi_{\Big\{x \in \overline{B\big(0,\tfrac{1}{2}\big)} : z+rx \in E\Big\}}
	\quad \text{if and only if} \quad
	\tilde{u} \geq t \chi_{E\cap \overline{B\left(z,\frac{r}{2}\right)}}.
$$
Thus, $u$ is an admissible comparison map in the definition of $\cp_{\widetilde{\varphi}}^t\Big(\Big\{x \in \overline{B\big(0,\tfrac{1}{2}\big)} : z+rx \in E\Big\}; B(0,1)\Big)$ if and only if $\tilde{u}$ is an admissible comparison map for $\cp_\varphi^t\left(E\cap \overline{B\left(z,\frac{r}{2}\right)};B(z,r)\right)$.

In the classical case $\varphi(x,s) =s^p$ (and level $t=1$), we compute that
\begin{align*}
	\int_{B(z,r)} |\nabla \tilde{u}(y)|^p \,\dy
	&=
	\int_{B(z,r)} \left|\nabla \left(u\left(\frac{y-z}{r}\right)\right)\right|^p \,\dy
	=
	r^{-p} \int_{B(z,r)} \left|\left(\nabla u\right)\left(\frac{y-z}{r}\right)\right|^p \,\dy \\
	&=
	r^{n-p} \int_{B(0,1)} |\nabla u(x)|^p \,\dx.
\end{align*}
Taking the infimum over all admissible test functions, we obtain that
\begin{equation}
	\cp_p\Big(E \cap \overline{B\big(z,\tfrac{r}{2}\big)} ; B(z,r)\Big)
	=
	r^{n-p} \cp_p\Big(\Big\{x \in \overline{B\big(0,\tfrac{1}{2}\big)} : z+rx \in E\Big\}; B(0,1)\Big).
	\label{eq:equivalence-density-aux}
\end{equation}
Finally, noting that
\begin{equation}
\label{eq:scaling-p-capacity}
	c(n,p) r^{n-p}
	\leq
	\cp_p\Big(\overline{B\big(z,\tfrac{r}{2}\big)} ; B(z,r)\Big)
	\leq
	c(n) r^{n-p}
\end{equation}
by \cite[Lemma 5.35]{KLV21}, we conclude that \eqref{eq:inf-capacity-density} and \eqref{eq:KLV-capacity-density} are indeed equivalent in the case of the variational $p$-capacity.

Next, we consider the general case with $\varphi(x,s) := s^p + a(x) s^q$.
First, note that $a_{z,r}^- = \inf_{y \in B(z,r)} a(y) = \inf_{x \in B(0,1)} a(z+rx)$ and thus
$$
    \varphi_{z,r}^-(t)
    =
    \widetilde{\varphi}_{0,1}^-(t) := t^p + \left(\inf_{x \in B(0,1)} a(z+rx)\right) t^q.
$$
On the one hand, chosen a level $t > 0$, we compute that
\begin{align}\label{eq:equivalence-density-aux3}
	\int_{B(z,r)} \frac{\varphi(y,|\nabla \tilde{u}(y)|)}{\varphi_{z,r}^-(t)} \,\dy
	&=
	\int_{B(z,r)} \frac{\varphi\left(y, \left|\nabla \left( \frac{u}{r} \right) \left(\frac{y-z}{r} \right) \right| \right)}{\varphi_{z,r}^-(t)} \,\dy \nonumber\\
	&=
	r^n\int_{B(0,1)} \frac{\varphi\left(z+rx, \left|\nabla \left( \frac{u}{r} \right)(x) \right| \right)}{\widetilde{\varphi}_{0,1}^-(t)} \,\dx \nonumber\\
	&=
	r^n\int_{B(0,1)} \frac{\widetilde{\varphi}_{0,1}^-\left( \frac{t}{r} \right)}{\widetilde{\varphi}_{0,1}^-(t)}
	\cdot \frac{\widetilde{\varphi}_{z,r}\left(x, \left|\nabla \left( \frac{u}{r} \right)(x) \right| \right)}{\widetilde{\varphi}_{0,1}^-\left(\frac{t}{r} \right)} \,\dx \nonumber\\
	&\geq
	r^{n-p} \cp_{\widetilde{\varphi}}^{\frac{t}{r}}
	\Big(\Big\{x \in \overline{B\big(0,\tfrac{1}{2}\big)} : z+rx \in E\Big\}; B(0,1)\Big)\nonumber\\
	&\geq
	r^{n-p} \cp_{\widetilde{\varphi}}^{\inf}\Big(\Big\{x \in \overline{B\big(0,\tfrac{1}{2}\big)} : z+rx \in E\Big\}; B(0,1)\Big),
\end{align}
where in the penultimate line, we used that $r \leq 1$.
Thus, assuming that \eqref{eq:inf-capacity-density} is true, by Lemma \ref{lem:upper-estimate-infcap} and \eqref{eq:scaling-p-capacity} we have that
\begin{align*}
	\int_{B(z,r)} \frac{\varphi(y,|\nabla \tilde{u}(y)|)}{\varphi_{z,r}^-(t)} \,\dy
	&\geq
	c_\mathrm{fat} r^{n-p}
	\geq
	c(n) c_\mathrm{fat} \cp_p\Big(\overline{B\big(z,\tfrac{r}{2}\big)} ; B(z,r)\Big)\\
	&\geq
	c(n) c_\mathrm{fat}
	\cp_\varphi^{\inf}\Big(\overline{B\big(z,\tfrac{r}{2}\big)} ; B(z,r)\Big).
\end{align*}
Taking the infimum over all admissible comparison maps $\tilde{u}$ and all levels $t>0$, this implies \eqref{eq:KLV-capacity-density} with constant $\tilde{c}_\mathrm{fat} := c(n) c_\mathrm{fat}$ and radius $0<\tilde{r}_o \leq 1$.

On the other hand, assuming that \eqref{eq:KLV-capacity-density} holds and using Lemma \ref{lem:upper-estimate-infcap}, we obtain that
\begin{align}
	r^{p-n} \cp_p \Big(E \cap \overline{B\big(z,\tfrac{r}{2}\big)} ; B(z,r)\Big)
	&\geq
	r^{p-n} \cp_\varphi^{\inf} \Big(E \cap \overline{B\big(z,\tfrac{r}{2}\big)} ; B(z,r)\Big)
	\nonumber \\
	&\geq
	\tilde{c}_\mathrm{fat} r^{p-n} \cp_\varphi^{\inf}\Big(\overline{B\big(z,\tfrac{r}{2}\big)} ; B(z,r)\Big)
	\nonumber \\
	&\geq
	c(n,p,q) \tilde{c}_\mathrm{fat}
	\label{eq:equivalence-density-aux2}
\end{align}
for sufficiently small radii $r>0$.
Indeed, in the last line, we have used that, thanks to Lemma \ref{lem:lower-estimate-infcap} and \eqref{eq:scaling-p-capacity}, it yields
\begin{align*}
	&\cp_\varphi^{\inf}\Big(\overline{B\big(z,\tfrac{r}{2}\big)} ; B(z,r)\Big) \\
	\geq
	& \, c(p,q) \min\left\{
	\cp_p\Big(\overline{B\big(z,\tfrac{r}{2}\big)} ; B(z,r)\Big),
	|B(z,r)| \left( |B(z,r)|^{-1}
	\cp_p\Big(\overline{B\big(z,\tfrac{r}{2}\big)} ; B(z,r)\Big) \right)^\frac{q}{p}	
	\right\}  \\
	\geq
	& \, c(p,q) \min \big\{
	c(n,p) r^{n-p}, c(n,p,q) r^{n-q}
	\big\} \\
	=
	& \, c(n,p,q) r^{n-p}
\end{align*}
provided that the radius $r$ is small enough depending on $n$, $p$ and $q$.
Therefore, applying once again Lemma \ref{lem:lower-estimate-infcap} and then \eqref{eq:equivalence-density-aux} and \eqref{eq:equivalence-density-aux2}, we conclude that
\begin{align*}
	&\cp_{\widetilde{\varphi}_{z,r}}^{\inf}\Big(\Big\{x \in \overline{B\big(0,\tfrac{1}{2}\big)} : z+rx \in E\Big\}; B(0,1)\Big) \\
	\geq
	&\min\Big\{
	\cp_p\Big(\Big\{x \in \overline{B\big(0,\tfrac{1}{2}\big)} : z+rx \in E\Big\}; B(0,1)\Big),\\
	&\qquad\qquad
	|B(0,1)|\left(|B(0,1)|^{-1}
	\cp_p\Big(\Big\{x \in \overline{B\big(0,\tfrac{1}{2}\big)} : z+rx \in E\Big\}; B(0,1)\Big)\right)^\frac{q}{p}
	\Big\} \\
	=
	&\min\Big\{
	r^{p-n}\cp_p\Big(E \cap \overline{B\big(z,\tfrac{r}{2}\big)} ; B(z,r)\Big),
	c(n,p,q)\left(
	r^{p-n}\cp_p\Big(E \cap \overline{B\big(z,\tfrac{r}{2}\big)} ; B(z,r)\Big)\right)^\frac{q}{p}
	\Big\} \\
	\ge
	& \, c(n,p,q,\tilde{c}_\mathrm{fat}).
\end{align*}

This means that \eqref{eq:KLV-capacity-density} implies \eqref{eq:inf-capacity-density} with a constant $c_\mathrm{fat} := c(n,p,q,\tilde{c}_\mathrm{fat})$ and radius $r_o = r_o(n,p,q,\tilde{r}_o)$.
\end{remark}

Lastly, in the next result we want to show the equivalence of local infimal $\varphi$-fatness and local $p$-fatness.

\begin{theorem}
\label{thm:equivalence-p-fatness}
A compact set $E \subset \R^n$ is locally uniformly infimally $\varphi$-fat in the sense of Definition~\ref{def:inf-capacity-density} if and only if it is locally uniformly $p$-fat.
\end{theorem}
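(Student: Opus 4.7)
The strategy is to rewrite the local infimal $\varphi$-fatness condition in the classical Maz'ya form \eqref{eq:KLV-capacity-density} via Remark \ref{rem:formulations-fatness}, and then to transfer fatness between $\cp_\varphi^{\inf}$ and $\cp_p$ using the two-sided sandwich provided by Lemmas \ref{lem:upper-estimate-infcap} and \ref{lem:lower-estimate-infcap}. The key observation is that both $\cp_p\big(\overline{B(z,r/2)}; B(z,r)\big)$ and $\cp_\varphi^{\inf}\big(\overline{B(z,r/2)}; B(z,r)\big)$ are of order $r^{n-p}$ as $r\to 0^+$: the upper bound follows directly from Lemma \ref{lem:upper-estimate-infcap} and \eqref{eq:scaling-p-capacity}, while the matching lower bound for $\cp_\varphi^{\inf}$ was already derived inside Remark \ref{rem:formulations-fatness} from Lemma \ref{lem:lower-estimate-infcap} and from the fact that $r^{n-p}\le r^{n-q}$ whenever $r\in(0,1]$ and $p<q$.

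For the implication from local $p$-fatness to local infimal $\varphi$-fatness, combining the assumed density condition at $z\in E$ with \eqref{eq:scaling-p-capacity} yields $\cp_p\big(E\cap \overline{B(z,r/2)}; B(z,r)\big) \geq c(n,p,\hat{c}_\mathrm{fat})\, r^{n-p}$ for $0<r<r_o\le 1$. Feeding this bound into Lemma \ref{lem:lower-estimate-infcap} on $O=B(z,r)$, for which $|O|\sim r^n$, turns the two terms inside the minimum into quantities of order $r^{n-p}$ and $r^{n-q}$; since $r\le 1$ and $p<q$, the first term is the smaller one, and we obtain $\cp_\varphi^{\inf}\big(E\cap \overline{B(z,r/2)}; B(z,r)\big) \geq c(n,p,q,\hat{c}_\mathrm{fat})\, r^{n-p}$. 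Comparing this with the upper bound $\cp_\varphi^{\inf}\big(\overline{B(z,r/2)}; B(z,r)\big) \le \cp_p\big(\overline{B(z,r/2)}; B(z,r)\big) \le c(n)\, r^{n-p}$ produces \eqref{eq:KLV-capacity-density}, hence \eqref{eq:inf-capacity-density} via Remark \ref{rem:formulations-fatness}. For the converse, starting from \eqref{eq:KLV-capacity-density} with constant $\tilde{c}_\mathrm{fat}$, Lemma \ref{lem:upper-estimate-infcap} applied to the left-hand side gives $\cp_p\big(E\cap \overline{B(z,r/2)}; B(z,r)\big) \geq \tilde{c}_\mathrm{fat}\,\cp_\varphi^{\inf}\big(\overline{B(z,r/2)}; B(z,r)\big)$, and the lower estimate $\cp_\varphi^{\inf}\big(\overline{B(z,r/2)}; B(z,r)\big) \geq c(n,p,q)\, r^{n-p}$ recalled above, together with $\cp_p\big(\overline{B(z,r/2)}; B(z,r)\big) \le c(n)\, r^{n-p}$ from \eqref{eq:scaling-p-capacity}, yields local uniform $p$-fatness.

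The only real book-keeping issue is ensuring that the term $|O|(\cp_p/|O|)^{q/p}$ produced by Lemma \ref{lem:lower-estimate-infcap} does not dominate, i.e.\ that the $r^{n-q}$ contribution does not overtake the $r^{n-p}$ one; this is guaranteed by the restriction $r\le r_o\le 1$ already built into Definition \ref{def:inf-capacity-density}. Since all constants that appear depend only on $n$, $p$, $q$ and the initial fatness constant, uniformity in $z\in E$ is automatically preserved through both implications.
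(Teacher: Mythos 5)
Your proposal is correct and follows essentially the same route as the paper: both directions hinge on the one-sided bound $\cp_\varphi^{\inf}\leq \cp_p$ from Lemma \ref{lem:upper-estimate-infcap}, the lower bound of Lemma \ref{lem:lower-estimate-infcap}, the scaling $\cp_p\big(\overline{B(z,r/2)};B(z,r)\big)\sim r^{n-p}$ from \eqref{eq:scaling-p-capacity}, and the reformulation of fatness in Remark \ref{rem:formulations-fatness}, so that $\min\{r^{n-p},r^{n-q}\}=r^{n-p}$ for $r\le 1$ closes the argument. The only cosmetic difference is that the paper keeps the first implication entirely on the rescaled unit ball and invokes Remark \ref{rem:formulations-fatness} at the very end, whereas you pass to the form \eqref{eq:KLV-capacity-density} up front; the substance is identical.
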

\begin{proof}
On the one hand, assuming that \eqref{eq:inf-capacity-density} holds true, by Lemma \ref{lem:upper-estimate-infcap} we have that
\begin{align*}
    &\cp_p\Big(\Big\{x \in \overline{B\big(0,\tfrac{1}{2}\big)} : z+rx \in E\Big\}; B(0,1)\Big) \\
    \geq
    &\cp_{\widetilde{\varphi}_{z,r}}^{\inf}\Big(\Big\{x \in \overline{B\big(0,\tfrac{1}{2}\big)} : z+rx \in E\Big\}; B(0,1)\Big)
    \geq
	c_{\mathrm{fat}},
\end{align*}
and by Remark \ref{rem:formulations-fatness} we have the first implication.

On the other hand, for any radius $r>0$ smaller than some $\hat{r}_o = \hat{r}_o(n,p,q) \le 1$, by \eqref{eq:scaling-p-capacity} we obtain that
\begin{equation}
\label{aux-capp-estimates}
\begin{aligned}
    &|B(z,r)|^{1-\frac{q}{p}} \cp_p\left( \overline{B\big(z,\tfrac{r}{2}\big)} ; B(z,r) \right)^\frac{q}{p} \\
    \geq
    & \, c(n,p,q) r^{n-q} \geq
    c(n,p,q) r^{n-p}
    \geq
    c(n,p,q) \cp_p\left( \overline{B\big(z,\tfrac{r}{2}\big)} ; B(z,r) \right).
\end{aligned}
\end{equation}

Therefore, if $E$ is locally uniformly $p$-fat, by Lemma \ref{lem:lower-estimate-infcap} we find that
\begin{align*}
    &\cp_\varphi^{\inf} \Big(E \cap \overline{B\big(z,\tfrac{r}{2}\big)} ; B(z,r)\Big)\\
    \geq
    &\min\left\{
    \cp_p \Big(E \cap \overline{B\big(z,\tfrac{r}{2}\big)} ; B(z,r)\Big),
    |B(z,r)|^{1-\frac{q}{p}} \cp_p\left(E \cap \overline{B\big(z,\tfrac{r}{2}\big)} ; B(z,r) \right)^\frac{q}{p}
    \right\}\\
    \geq
    &\min\left\{
    \hat{c}_\mathrm{fat} \cp_p \Big(\overline{B\big(z,\tfrac{r}{2}\big)} ; B(z,r)\Big),
    \hat{c}_\mathrm{fat}^\frac{q}{p} |B(z,r)|^{1-\frac{q}{p}} \cp_p\left( \overline{B\big(z,\tfrac{r}{2}\big)} ; B(z,r) \right)^\frac{q}{p}
    \right\}\\
    \geq
    & \, \tilde{c}_\mathrm{fat}(n,p,q,\hat{c}_\mathrm{fat}) \cp_p \Big(\overline{B\big(z,\tfrac{r}{2}\big)} ; B(z,r)\Big) \\
    \geq
    & \, \tilde{c}_\mathrm{fat}(n,p,q,\hat{c}_\mathrm{fat}) \cp_\varphi^{\inf} \Big(\overline{B\big(z,\tfrac{r}{2}\big)} ; B(z,r)\Big).
\end{align*}

for any radius $0<r< \hat{r}_o(n,p,q)$, where in the last estimate we used Lemma \ref{lem:upper-estimate-infcap}.
In view of Remark \ref{rem:formulations-fatness}, this shows the claim of the lemma.
\end{proof}

%---------------------------------------------------------------

\section{Self-improving property of the infimal \texorpdfstring{$\varphi$}{phi}-capacity density condition}
\label{sec:self-improv}

In this section we prove that the infimal $\varphi$-fatness condition is self-improving.
We begin with a slightly more precise version of the classical result for the $p$-capacity density condition.
Essentially, we refer to the proof of \cite[Theorem 7.21]{KLV21}, which relies on the equivalence of $p$-fatness of the complement with the boundary Poincaré inequality \cite[Theorems 6.22 \& 6.23]{KLV21}, and the self-improvement of the boundary Poincaré inequality in \cite[Theorem 7.24]{KLV21}, with the minor difference that we track the centers and radii of the balls in these proofs.
While we will not need the improvement in this section, we will use it in the subsequent section when proving the global higher integrability result.

\begin{theorem}
\label{thm:pselfimproving2}
    Let $1<p<+\infty$. Assume that $E\subset \R^n$ is a closed set and that there exists $B(x,r_a)$ with $x \in E$ such that $E$ is locally uniformly $p$-fat in the sense of Definition \ref{def:inf-capacity-density} at $E\cap \overline{B(x,r_a)}$ with some constant $c_\mathrm{fat}$ and $r_0>0$. Then there exist $\varepsilon = \varepsilon(n,p,c_\mathrm{fat})$ with $0<\varepsilon<p-1$ and a constant $\widetilde{c}_\mathrm{fat}=\widetilde{c}_\mathrm{fat}(n,p,c_\mathrm{fat})$ such that $E$ is locally uniformly $(p-\varepsilon)$-fat at $E\cap \overline{B\left(x,\tilde{r}_0\right)}$ with the parameters $\widetilde{c}_\mathrm{fat}$ and $\tilde{r}_0$, where $\tilde{r}_0:=\tfrac{1}{4}\min\left\{r_a,r_0\right\}$.
\end{theorem}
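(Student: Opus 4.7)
The plan is to adapt the proof of \cite[Theorem 7.21]{KLV21} to the local setting by carefully tracking the dependencies on centers and radii of the balls through each step. The global strategy is the standard three-step chain: local uniform $p$-fatness implies a localized boundary $(1,p)$-Poincaré inequality; the latter self-improves to a boundary $(1,p-\varepsilon)$-Poincaré inequality; and the improved Poincaré inequality can be converted back into local uniform $(p-\varepsilon)$-fatness.

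First I would invoke the fatness-to-Poincaré direction in \cite[Theorems 6.22]{KLV21}, but applied locally: since $E$ is uniformly $p$-fat at every point $z \in E \cap \overline{B(x,r_a)}$ for radii up to $r_0$, one gets a boundary $(1,p)$-Poincaré inequality of the form
\begin{equation*}
    \bint_{B(z,\rho)} |u|^p \,\dx \leq c(n,p,c_{\mathrm{fat}}) \, \rho^p \bint_{B(z,2\rho)} |\nabla u|^p \,\dx
\end{equation*}
for all $u \in \mathrm{Lip}_0(\R^n \setminus E)$, whenever $z \in E \cap \overline{B(x,r_a)}$ and $2\rho \leq \min\{r_a,r_0\}$, so that the relevant balls remain within the region where the fatness hypothesis applies.

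Next I would apply the self-improvement of the boundary Poincaré inequality from \cite[Theorem 7.24]{KLV21}; this produces an exponent $\varepsilon = \varepsilon(n,p,c_{\mathrm{fat}}) \in (0,p-1)$ and a constant, depending only on $n$, $p$, $c_{\mathrm{fat}}$, for which the same family of balls supports a boundary $(1,p-\varepsilon)$-Poincaré inequality. The key point here is that the proof of \cite[Theorem 7.24]{KLV21} uses chaining arguments that live entirely within a fixed ball; by inspecting it one checks that no additional shrinkage beyond a dimensional factor is needed, so the new inequality still holds on balls $B(z,\rho)$ with $z \in E \cap \overline{B(x,r_a)}$ and $4\rho \leq \min\{r_a,r_0\}$, which explains the factor $\tfrac{1}{4}$ appearing in the definition of $\tilde{r}_0$. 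Finally I would invoke the Poincaré-to-fatness direction of \cite[Theorem 6.23]{KLV21} to deduce the desired local uniform $(p-\varepsilon)$-fatness of $E$ at $E\cap \overline{B(x,\tilde{r}_0)}$ with a new constant $\widetilde{c}_{\mathrm{fat}} = \widetilde{c}_{\mathrm{fat}}(n,p,c_{\mathrm{fat}})$.

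The main obstacle I foresee is not conceptual but organizational: the cited results in \cite{KLV21} are formulated in a global form where radii may range freely, whereas here we need to certify that every ball implicitly appearing in the two equivalences and in the self-improvement lemma — including doubled balls, chained balls and Whitney-type covers — stays inside the neighborhood where the initial assumption is valid. This amounts to tracking a single dimensional constant through each lemma, and choosing $\tilde{r}_0 = \tfrac{1}{4}\min\{r_a,r_0\}$ to absorb all the accumulated shrinkages at once. Once this bookkeeping is done, the statement follows directly from the cited machinery, with $\varepsilon$ and $\widetilde{c}_{\mathrm{fat}}$ inheriting only the dependencies $n,p,c_{\mathrm{fat}}$.
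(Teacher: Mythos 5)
Your plan follows the paper's argument step for step: local $p$-fatness yields a localized boundary $p$-Poincaré inequality via \cite[Theorem 6.22]{KLV21}; that inequality self-improves via \cite[Theorem 7.24]{KLV21} after tracking the balls through \cite[Lemma 7.29]{KLV21}; and the conclusion follows from the Poincaré-to-fatness chain in \cite[Theorem 6.23]{KLV21} (passing through the pointwise Hardy inequality), with $\tilde r_0 = \tfrac14\min\{r_a,r_0\}$ absorbing the accumulated shrinkage. Two small remarks for completeness: (i) in the paper the factor $\tfrac14$ is spent mostly in the final step (d)$\to$(a) of \cite[Theorem 6.23]{KLV21}, where the pointwise Hardy inequality must be applicable on $B(z,\tilde r_0/6)$, rather than in the self-improvement step as you indicate, though the net bookkeeping is the same; and (ii) you should handle separately the degenerate case $(\R^n\setminus E)\cap\overline{B(x,\min\{r_a,r_0\}/2)} = \emptyset$, which is needed so that the pointwise-Hardy step has an exterior point to act on -- in that case the conclusion is trivial because small balls around centers near $x$ lie entirely in $E$.
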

\begin{proof}
    We can assume without loss of generality that $(\R^n\setminus E) \cap  \overline{B\left(x,\min\left\{r_a,r_0\right\}/2\right)} \neq \emptyset $, since otherwise $B(z,\varrho/2)\subset E$ for every $0<\varrho<\min\left\{r_a,r_0\right\}/2$ and $z\in B(x,\min\left\{r_a,r_0\right\}/4)$. Thus there is nothing to prove in this scenario. From the proof of \cite[Theorem 6.22]{KLV21} it is apparent that the local uniform $p$-fatness of $E$ at $E\cap \overline{B(x,r_a)}$ yields the boundary $p$-Poincaré inequality for any test function $u \in \Lip_0\big( \R^n \setminus E \big)$ and ball $B(z,\varrho)$ with $z \in E\cap \overline{B(x,r_a)}$ and $\varrho < r_0$.
    Next, we need to ensure that this inequality is self-improving by tracking the balls in the proof of \cite[Theorem 7.24]{KLV21}, in particular \cite[Lemma 7.29]{KLV21}, where the $p$-fatness condition is applied to smaller balls contained in $B(z,\varrho)$.
    In our setting, the $p$-fatness condition can be used for every ball in $B(z,\varrho)$, $z\in E \cap \overline{B\left(x,r_a/2\right)}$, with parameters $c_\mathrm{fat}$ and $r_0>0$, provided that we have $\varrho \leq \min\left\{\frac{r_a}{2},r_0\right\}$.
    Thus, we obtain the boundary $(p-\epsilon)$-Poincaré inequality for any test function $u \in \Lip_0( \R^n \setminus E )$ and any ball $B(z,\varrho)$ with $z\in E \cap \overline{B\left(x,r_a/2\right)}$ and $\varrho \leq \min\left\{r_a/2,r_0\right\}$.
    Now, let us pay attention to the proof of \cite[Theorem 6.23]{KLV21}, in particular the step from (b) to (c). There, a boundary point $z\in \partial E$ is selected corresponding to some $y\in \R^n \setminus E$ and the boundary Poincaré inequality is applied to the ball $B(z,\dist(y,\partial E))$. In our setting we therefore need that $\dist(y,\partial E)\leq \min\left\{r_a/2,r_0\right\}$ and $z$ to be contained in $\overline{B\left(x,r_a/2\right)}$. This however is the case if $y\in (\R^n\setminus E)\cap \overline{B\left(x,\min\left\{r_a,r_0\right\}/2\right)}$ which is nonempty by assumption. Therefore, we obtain the pointwise $(p-\varepsilon)$-Hardy inequality
    $$
        |u(y)| \leq
        c \dist(y,\partial E) \left( M_{2\dist(y,\partial E)} |\nabla u|^{p-\varepsilon}(y) \right)^\frac{1}{p-\varepsilon}
    $$
    for any $u \in \Lip_0( \R^n \setminus E)$ and any $y \in ( \R^n \setminus E ) \cap \overline{B\left(x,\min\left\{r_a,r_0\right\}/2\right)}$. According to \cite[Theorem 6.23]{KLV21}, namely the step from (c) to (d), this implies the pointwise $(p-\varepsilon)$-Hardy inequality for any $y \in ( \R^n \setminus E ) \cap \overline{B\left(x,\min\left\{r_a,r_0\right\}/2\right)}$. Let us now consider the step from (d) to (a) in the proof of \cite[Theorem 6.23]{KLV21}. There, the authors establish that $E$ is $(p-\varepsilon)$-fat at a point $z\in E$ in the sense of Definition \ref{def:inf-capacity-density} with parameters $\Tilde{r}_0$ and $\widetilde{c}_\mathrm{fat}$, provided that the poinwise $(p-\varepsilon)$-Hardy inequality can be applied to every $y\in (\R^n\setminus E) \cap B(z,\tilde{r}_0/6)$. But if we choose $\Tilde{r}_0 = \min\left\{r_a,r_0\right\}/4$ and assume $z\in E\cap \overline{B\left(x,\Tilde{r}_0\right)}$ this is quite certainly possible due to the previous comments. Therefore, $E$ is locally uniformly $(p-\varepsilon)$-fat at $E\cap \overline{B\left(x,\tilde{r}_0\right)}$ with the constant $\widetilde{c}_\mathrm{fat}$ and $\tilde{r}_0=\tfrac{1}{4}\min\left\{r_a,r_0\right\}$.
\end{proof}

Based on Theorem \ref{thm:pselfimproving2} and the lower bound in Lemma \ref{lem:lower-estimate-infcap}, we are now ready to prove that the infimal $\varphi$-fatness condition is self-improving.

\begin{lemma}
\label{lem:phiselfimproving}
    Assume that $E\subset \R^n$ is a closed set which is locally uniformly infimally $\varphi$-fat in the sense of Definition \ref{def:inf-capacity-density} with some constant $c_\mathrm{fat}$. Then there exists $\varepsilon=\varepsilon(n,p,q,c_\mathrm{fat})$ with $0<\varepsilon<p-1$ and a constant $\hat{c}_\mathrm{fat}=\hat{c}_\mathrm{fat}(n,p,q,c_\mathrm{fat})$ such that $E$ is locally uniformly infimally $\varphi^{1-\frac{\varepsilon}{p}}$-fat for the constant $\hat{c}_\mathrm{fat}$.
\end{lemma}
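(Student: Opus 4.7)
The plan is to reduce the self-improvement of infimal $\varphi$-fatness to the classical self-improvement of $p$-fatness. More precisely, I would proceed along the chain of implications
\begin{equation*}
    \text{infimal } \varphi\text{-fat} \ \overset{\text{Thm \ref{thm:equivalence-p-fatness}}}{\Longrightarrow}\ p\text{-fat} \ \overset{\text{Thm \ref{thm:pselfimproving2}}}{\Longrightarrow}\ (p-\varepsilon)\text{-fat} \ \Longrightarrow \ \text{infimal }\varphi^{1-\varepsilon/p}\text{-fat}.
\end{equation*}
The first implication is immediate from Theorem \ref{thm:equivalence-p-fatness}, and the second follows by applying Theorem \ref{thm:pselfimproving2} at every point $z \in E$ (with $r_a$ chosen equal to the radius furnished by the $\varphi$-fatness hypothesis). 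This yields $\varepsilon = \varepsilon(n,p,c_\mathrm{fat}) \in (0,p-1)$, together with a new fatness constant and a new (smaller) radius, all depending only on $n,p,q,c_\mathrm{fat}$. The entire argument thus reduces to the last implication.

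For that implication, the key point is that $\varphi^{1-\varepsilon/p}$ is pointwise equivalent to a genuine double-phase integrand. Setting $p':=p-\varepsilon$, $q':=q(p-\varepsilon)/p$, and
\begin{equation*}
    \varphi'(x,t) := t^{p'} + a(x)^{1-\varepsilon/p}\, t^{q'},
\end{equation*}
the elementary inequalities $(A+B)^r \leq A^r + B^r \leq 2(A+B)^r$, valid for $r=1-\varepsilon/p \in (0,1)$ and $A,B\ge 0$, give
\begin{equation*}
    \tfrac{1}{2}\,\varphi'(x,t) \,\leq\, \varphi^{1-\varepsilon/p}(x,t) \,\leq\, \varphi'(x,t).
\end{equation*}
Since moreover the normalizing factor from Definition \ref{def_var_cap} transforms as $(\varphi^{1-\varepsilon/p})^-_O(t) = (\varphi^-_O(t))^{1-\varepsilon/p} \leq (\varphi')^-_O(t)$, the ratio $\varphi(\cdot,|\nabla u|)/\varphi^-_O(t)$ remains two-sided comparable up to a factor of $2$, and consequently
\begin{equation*}
    \cp_{\varphi^{1-\varepsilon/p}}^{\inf}(E;O) \,\geq\, \tfrac{1}{2}\,\cp_{\varphi'}^{\inf}(E;O).
\end{equation*}
The same inequality, suitably rescaled, holds for the $\widetilde{\varphi}_{z,r}$-quantities appearing in Definition \ref{def:inf-capacity-density}.

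Since $p'<q'$ (a consequence of $p<q$), Lemma \ref{lem:lower-estimate-infcap} applies with $(p',q')$ in place of $(p,q)$ and provides a lower bound for $\cp_{\varphi'}^{\inf}$ in terms of $\cp_{p'}$. Plugging in the $(p-\varepsilon)$-fatness obtained in the second step, together with the explicit scaling \eqref{eq:scaling-p-capacity} of the $p'$-capacity of concentric balls --- exactly as in the second half of the proof of Theorem \ref{thm:equivalence-p-fatness} --- yields a uniform lower bound for $\cp_{\varphi'}^{\inf}$ in the rescaled configuration of Definition \ref{def:inf-capacity-density}, which then transfers to $\cp_{\varphi^{1-\varepsilon/p}}^{\inf}$ and gives the desired infimal $\varphi^{1-\varepsilon/p}$-fatness with constants $\hat{c}_\mathrm{fat}$ and $\hat{r}_o$ depending only on $n,p,q,c_\mathrm{fat}$. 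The main delicate point is this last step: one has to check that the pointwise comparison between $\varphi^{1-\varepsilon/p}$ and $\varphi'$, together with the corresponding comparison of the infima $\varphi^-_O(t)$, is preserved when one takes the infimum over admissible test functions \emph{and} over the level $t>0$ --- essentially a careful bookkeeping of the arguments in Section \ref{capacities} with exponents $(p',q')$ in place of $(p,q)$.
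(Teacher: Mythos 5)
Your proposal is correct and follows essentially the same route as the paper's own proof: reduce to $p$-fatness via Theorem~\ref{thm:equivalence-p-fatness}, self-improve to $(p-\varepsilon)$-fatness via Theorem~\ref{thm:pselfimproving2}, and then introduce the auxiliary double-phase integrand $\varphi'(x,t)=t^{p(1-\varepsilon/p)}+a(x)^{1-\varepsilon/p}t^{q(1-\varepsilon/p)}$ — which is exactly the paper's $\widehat{\varphi}$ with $\delta=\varepsilon/p$ — and compare it pointwise to $\varphi^{1-\varepsilon/p}$. The paper carries out the "delicate bookkeeping" you defer to in the final step by applying Lemma~\ref{lem:lower-estimate-infcap} with exponents $(p(1-\delta),q(1-\delta))$, the scaling \eqref{eq:scaling-p-capacity}, Lemma~\ref{lem:upper-estimate-infcap}, and the two-sided pointwise estimates corresponding to your $(A+B)^r\leq A^r+B^r\leq 2(A+B)^r$, precisely as you outline.
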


\begin{proof}
    Let $\delta>0$ to be chosen. We start the proof with the estimates
    \begin{equation}\label{aux-phiselfimproving1}
        \tfrac{1}{2} \left( t^{p(1-\delta)} + a^{1-\delta} t^{q(1-\delta)} \right)
        \leq
        \max\left\{ t^{p(1-\delta)} , a^{1-\delta} t^{q(1-\delta)} \right\}
        \leq
        \left( t^{p} + a t^{q} \right)^{1-\delta}
    \end{equation}
    and
    \begin{equation}\label{aux-phiselfimproving2}
        \left( t^{p} + a t^{q} \right)^{1-\delta}
        \leq
        2^{1-\delta} \max\left\{ t^{p(1-\delta)} , a^{1-\delta} t^{q(1-\delta)} \right\}
        \leq 
        2\left( t^{p(1-\delta)} + a^{1-\delta} t^{q(1-\delta)} \right),
    \end{equation}
    which are valid for every $t\geq 0$ and $a\in \R_{\geq 0}$, in particular for the choices $a=a(x)$ as in \eqref{as:a} and $a=a^-_O$ for any open set $O\subset \R^n$.
    Now, we define the auxiliary function $\widehat{\varphi}$ by
    \begin{equation*}
        \widehat{\varphi}(x,t) = \left( t^{p(1-\delta)} + a^{1-\delta}(x) t^{q(1-\delta)} \right).
    \end{equation*}
    Since the proof of Lemma \ref{lem:lower-estimate-infcap} only relies on the boundedness of $a$, which is also satisfied by $a^{1-\delta}$, we may apply Lemma \ref{lem:lower-estimate-infcap} to $\widehat{\varphi}$ with parameters $(p(1-\delta),q(1-\delta))$ replacing $(p,q)$.
    Furthermore, we observe that Theorem \ref{thm:equivalence-p-fatness} yields the existence of $c^*_\mathrm{fat}>0$ depending only on $n,p,q$ and $c_\mathrm{fat}$ such that $E$ is locally uniformly $p$-fat with fatness constant $c^*_\mathrm{fat}$.
    Now, Theorem \ref{thm:pselfimproving2} allows us to infer the existence of some $\varepsilon=\varepsilon(n,p,c^*_\mathrm{fat}) \in (0,p-1)$ and $\hat{c}^*_\mathrm{fat}=\hat{c}^*_\mathrm{fat}(n,p,c^*_\mathrm{fat}) >0$ such that $E$ is locally uniformly $(p-\varepsilon)$-fat with fatness constant $\hat{c}^*_\mathrm{fat}$, namely that there holds
    \begin{equation*}
        \cp_{p-\varepsilon} \Big(E \cap \overline{B\big(z,\tfrac{r}{2}\big)}; B(z,r)\Big)
        \geq
        \hat{c}^*_\mathrm{fat} \cp_{p-\varepsilon} \Big(\overline{B\big(z,\tfrac{r}{2}\big)} ; B(z,r)\Big).
    \end{equation*}
    Additionally, analogously to \eqref{aux-capp-estimates} (see \cite[Lemma 5.35]{KLV21} in particular), for any radius $r>0$ that is smaller than some $\hat{r}_o = \hat{r}_o(n,p,q,\hat{c}^*_\mathrm{fat})\leq 1$ we have the chain of inequalities 
    \begin{align*}
         \big(\hat{c}^*_\mathrm{fat}\big)^\frac{q}{p} |B(z,r)|^{1-\frac{q}{p}} & \cp_{p-\varepsilon}\left( \overline{B\big(z,\tfrac{r}{2}\big)} ; B(z,r) \right)^\frac{q}{p}
        \geq
        c(n,p,q) \big(\hat{c}^*_\mathrm{fat}\big)^\frac{q}{p}  r^{n-(p-\varepsilon) \frac{q}{p}} 
        \nonumber\\ \qquad &\geq
        c(n,p,q) \big(\hat{c}^*_\mathrm{fat}\big)^\frac{q}{p}  r^{n-(p-\varepsilon)} 
        \geq
        c(n,p,q) \big(\hat{c}^*_\mathrm{fat}\big)^\frac{q}{p} \cp_{p-\varepsilon}\left( \overline{B\big(z,\tfrac{r}{2}\big)} ; B(z,r) \right).
    \end{align*}
    
    Hence, we conclude from Lemma \ref{lem:lower-estimate-infcap}, $\delta=\frac{\varepsilon}{p}$, the penultimate inequality, and Lemma \ref{lem:upper-estimate-infcap} that
    \begin{align*}
        &\cp_{\widehat{\varphi}}^{\inf} \Big(E \cap \overline{B\big(z,\tfrac{r}{2}\big)} ; B(z,r)\Big)
        \\&\geq
        c(p,q) \min\left\{
        \cp_{p-\varepsilon} \Big(E \cap \overline{B\big(z,\tfrac{r}{2}\big)} ; B(z,r)\Big),
        |B(z,r)|^{1-\frac{q}{p}} \cp_{p-\varepsilon}\left( E \cap \overline{B\big(z,\tfrac{r}{2}\big)} ; B(z,r) \right)^\frac{q}{p}
        \right\}
        \\ &\geq
        c(p,q) \min\left\{
        \hat{c}^*_\mathrm{fat} \cp_{p-\varepsilon} \Big(\overline{B\big(z,\tfrac{r}{2}\big)} ; B(z,r)\Big),
        \big(\hat{c}^*_\mathrm{fat}\big)^\frac{q}{p} |B(z,r)|^{1-\frac{q}{p}} \cp_{p-\varepsilon}\left( \overline{B\big(z,\tfrac{r}{2}\big)} ; B(z,r) \right)^\frac{q}{p}
        \right\}
        \\&\geq
        c(n,p,q) \big(\hat{c}^*_\mathrm{fat}\big)^\frac{q}{p} \cp_{p-\varepsilon} \Big(\overline{B\big(z,\tfrac{r}{2}\big)} ; B(z,r)\Big) 
        \\&\geq
        c(n,p,q) \big(\hat{c}^*_\mathrm{fat}\big)^\frac{q}{p} \cp_{\widehat{\varphi}}^{\inf} \Big(\overline{B\big(z,\tfrac{r}{2}\big)} ; B(z,r)\Big)
    \end{align*}
    for any $0<r<\hat{r}_o$. This shows that $E$ is uniformly infimally $\widehat{\varphi}$-fat.
    It remains to show that this is equivalent to uniform infimal $\varphi^{1-\varepsilon/p}$-fatness for $\delta=\varepsilon/p$.
    To this end, consider any compact subset $E$ of an open set $O \subset \R^n$, and a function $u \in \Lip(\overline{O})$ that satisfies $u \geq t\chi_E$ for some $t>0$ and $\left.u\right|_{\partial O} = 0$.
    Since \eqref{aux-phiselfimproving1} and \eqref{aux-phiselfimproving2} yield
    \begin{equation*}
        \tfrac{1}{4} \int_O \frac{\varphi^{1-\delta}(x,|\nabla u|)}{(\varphi_O^-)^{1-\delta}(t)} \,\dx
        \leq
        \int_O \frac{\widehat{\varphi}(x,|\nabla u|)}{\widehat{\varphi}_O^-(t)} \,\dx
        \quad \text{and} \quad
        \int_O \frac{\widehat{\varphi}(x,|\nabla u|)}{\widehat{\varphi}_O^-(t)} \,\dx
        \leq
        4\int_O \frac{\varphi^{1-\delta}(x,|\nabla u|)}{(\varphi_O^-)^{1-\delta}(t)} \,\dx,
    \end{equation*}
    we immediately deduce that
    \begin{equation*}
        \tfrac{1}{4}\cp_{\varphi^{1-\varepsilon/p}}^{\inf}(E;O)
        \leq
        \cp_{\widehat{\varphi}}^{\inf}(E;O)
        \leq 
        4 \cp_{\varphi^{1-\varepsilon/p}}^{\inf}(E;O).
    \end{equation*}
    Together with the penultimate inequality, this shows that $E$ is uniformly infimally $\varphi^{1-\varepsilon/p}$-fat with constant $\hat{c}_\mathrm{fat} := c(n,p,q) \big(\hat{c}^*_\mathrm{fat}\big)^\frac{q}{p}$, i.e., the claim of the lemma.
\end{proof}

%*********************************************************************

\section{Auxiliary results}
\label{sec:aux}

In this section, we collect auxiliary results that will be needed in the rest of the paper.
First of all, note that by assumption \eqref{eq:gap-exponents}, the exponent $q$ always lies in one of the following intervals, depending on $p$:
\begin{equation}
    \left\{
    \begin{array}{ll}
        1< q \leq p + \frac{\alpha p}{n} = \frac{n+\alpha}{n} p \leq \frac{np}{n-p} & \text{if } 1<p<n,\\[5pt]
        1< q < +\infty & \text{if } p \geq n. 
    \end{array}
    \right.
    \label{eq:q-less-Sobolev-exponent}
\end{equation}
\\
We begin with the following Maz'ya inequality for the $p$-energy. While the statement holds for general $p$-quasicontinuous functions, see \cite[Theorem 5.47]{KLV21}, we only formulate it for Lipschitz continuous functions. For a more general version see \cite[Theorem 6.21]{BjBj}.

\begin{lemma}
\label{lem:classical-Mazya}
Assume that $1<p<+\infty$ and that $u \in \Lip(\R^n)$.
Further, let $1\leq s \leq \frac{np}{n-p}$ if $p<n$ and $1\leq s < +\infty$ if $p\ge n$.
Then, for any ball $B(z,r) \subset \R^n$ there exists a constant $c = c(n,p,s)$ such that
\begin{equation*}
    \bigg( \bint_{B(z,r)} |u|^s \, \dx \bigg)^{\frac{p}{s}} 
    \leq 
    \frac{c}{\cp_p\left(\{u=0\} \cap \overline{B\left(z,\frac{r}{2}\right)};B(z,r)\right)} \int_{B(z,r)} |\nabla u|^p \, \dx.
\end{equation*}
\qed
\end{lemma}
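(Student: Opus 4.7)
The plan is to reduce to the unit ball by scaling, and then combine the classical Sobolev--Poincar\'e inequality with a capacitary estimate of the mean value of $u$.

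\textbf{Scaling.} Setting $\tilde u(y) := u(z+ry)$ for $y\in B(0,1)$ and $\tilde F := \{\tilde u=0\} \cap \overline{B(0,1/2)}$, a direct change of variables and the $p$-capacity scaling identity already recorded in \eqref{eq:equivalence-density-aux} show that the three quantities
\begin{equation*}
\bint_{B(z,r)} |u|^s \, \dx, \qquad \int_{B(z,r)} |\nabla u|^p \, \dx, \qquad \cp_p\left(\{u=0\} \cap \overline{B(z,r/2)}; B(z,r)\right)
\end{equation*}
transform in a compatible way, so that all powers of $r$ cancel in the stated inequality. Hence it suffices to prove the claim on the unit ball for $\tilde u$.

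\textbf{Main step.} On $B(0,1)$, Sobolev--Poincar\'e in the admissible range of $s$ yields
\begin{equation*}
\bigg(\bint_{B(0,1)} |\tilde u - (\tilde u)_{B(0,1)}|^s \, \dy\bigg)^{p/s} \leq c(n,p,s) \int_{B(0,1)} |\nabla \tilde u|^p \, \dy.
\end{equation*}
The remaining and key task is to bound $c_0 := (\tilde u)_{B(0,1)}$ in terms of capacity. If $c_0 = 0$, the conclusion is immediate once we invoke the trivial upper bound $\cp_p(\tilde F;B(0,1)) \leq c(n)$ from \eqref{eq:scaling-p-capacity}. Otherwise, I fix a cutoff $\eta \in \Lip(\overline{B(0,1)})$ with $\eta \equiv 1$ on $\overline{B(0,1/2)}$, $\eta|_{\partial B(0,1)} = 0$ and $|\nabla \eta|\leq 4$, and consider
\begin{equation*}
\phi := \eta \, \min\left\{1, \frac{|\tilde u - c_0|}{|c_0|}\right\}.
\end{equation*}
On $\tilde F$ one has $\tilde u=0$ and $\eta=1$, so $\phi=1$; and $\phi$ vanishes on $\partial B(0,1)$, so $\phi$ is admissible for $\cp_p(\tilde F;B(0,1))$. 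Estimating $|\nabla \phi|$ by the product rule and controlling the zeroth-order term via the standard Poincar\'e inequality applied to $\tilde u - c_0$ gives
\begin{equation*}
|c_0|^p \, \cp_p(\tilde F;B(0,1)) \leq c(n,p) \int_{B(0,1)} |\nabla \tilde u|^p \, \dy.
\end{equation*}

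\textbf{Conclusion.} Combining the Sobolev--Poincar\'e estimate with the capacitary bound on $|c_0|^p$ via the triangle inequality, and absorbing the additional $\int |\nabla \tilde u|^p$ contribution by the universal upper bound $\cp_p(\tilde F;B(0,1)) \leq c(n)$, yields the inequality on $B(0,1)$; scaling back gives the general case. The only nontrivial point is the construction and capacitary admissibility of the test function $\phi$; everything else reduces to standard Sobolev/Poincar\'e estimates and the scaling calculation already performed in Remark \ref{rem:formulations-fatness}.
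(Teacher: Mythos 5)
Your proposal is correct, but it is worth noting that the paper does not prove Lemma~\ref{lem:classical-Mazya} at all: it is stated as a known fact, with pointers to \cite[Theorem 5.47]{KLV21} and \cite[Theorem 6.21]{BjBj}, so there is no ``paper proof'' to compare against. What you have written is a self-contained proof of the cited result, and it follows the standard line of argument for Maz'ya-type capacitary Sobolev inequalities: reduce by scaling to the unit ball (where you correctly observe that all three quantities scale by the same power $r^{n-p}$, up to the $r^n$ that is absorbed by the averages), apply the Sobolev--Poincar\'e inequality in the admissible range of $s$ to control $\tilde u - (\tilde u)_{B(0,1)}$, and then estimate the mean value by testing the capacity of $\tilde F$ with $\phi = \eta\min\{1, |\tilde u - c_0|/|c_0|\}$. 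The gradient estimate $|\nabla\phi| \leq c\big(|\tilde u - c_0| + |\nabla\tilde u|\big)/|c_0|$ together with the $(p,p)$-Poincar\'e inequality gives $|c_0|^p\,\cp_p(\tilde F;B(0,1)) \leq c(n,p)\int_{B(0,1)}|\nabla\tilde u|^p$, and you correctly use the trivial bound $\cp_p(\tilde F;B(0,1))\leq\cp_p(\overline{B(0,1/2)};B(0,1))\leq c(n)$ from \eqref{eq:scaling-p-capacity} to fold the Sobolev--Poincar\'e term into the same capacity-weighted form. The degenerate cases ($c_0=0$, and $\cp_p(\tilde F;B(0,1))=0$, in which case the right-hand side is interpreted as $+\infty$) are handled correctly. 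Two minor remarks: the Sobolev--Poincar\'e estimate on $B(0,1)$ should nominally have the average $\bint_{B(0,1)}|\nabla\tilde u|^p$ on the right, but on the unit ball this differs from $\int_{B(0,1)}|\nabla\tilde u|^p$ only by $|B(0,1)|=c(n)$; and you implicitly use that $\min$ of Lipschitz functions and truncations of Lipschitz functions are Lipschitz, which is standard (see also Lemma~\ref{lem:truncation}). So the argument is sound and fills in what the paper only cites.
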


For the Poincaré inequality for functions $u$ with zero boundary values involving the double-phase integrand $\varphi$, we refer to \cite[Theorem 2.13]{Ok2017}, see also \cite[Theorem 1.6 \& Remark 2]{CM15_2} for the case $\frac{q}{p} < 1+\frac{\alpha}{n}$.
By Hölder's inequality, it holds in particular for $\theta=1$.

\begin{lemma}
\label{lem:poincare-zero-boundary-values}
Let $\Omega \subset \R^n$ be a bounded open set and consider a ball $B(z,r) \subset \Omega$ with $0<r \le 1$.
Assume that \eqref{as:a} holds and let $1<p<q<+\infty$ be such that \eqref{eq:gap-exponents} holds.
Then, there exists a constant $c = c(n,p,q) \ge 1$ and an exponent $0 < \theta < 1$ depending only on $n,p$ and $q$ such that
\begin{align*}
    \bint_{B(z,r)} \varphi \left( x, \frac{|u - (u)_{B(z,r)}|}{r} \right) \,\dx
    \leq
    c \left( 1 + [a]_{0;\alpha} \|\nabla u\|_{L^p(\Omega)}^{q-p} \right)
    \bigg( \bint_{B(z,r)} \varphi ( x, |\nabla u| )^{\theta} \,\dx \bigg)^{\frac{1}{\theta}}
\end{align*}
holds for any $u \in W^{1,p}(\Omega)$. Moreover, in the above estimate one can replace $u - (u)_{B(z,r)}$ by $u$ if $u \in W^{1,p}_0(B(z,r))$.
\qed
\end{lemma}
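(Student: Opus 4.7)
The plan is to decompose $\varphi(x,t)=t^p + a(x)t^q$ additively and to control the $p$- and $q$-contributions to the left-hand side separately, choosing once and for all a single exponent $\theta\in(0,1)$, depending only on $n,p,q$, that renders all subsequent Sobolev--Poincar\'e steps compatible. The constraints on $\theta$ come from three applications: (i) a sub-Poincar\'e inequality at level $p$ with gradient exponent $p\theta$, requiring $p\theta\geq p_\ast:=pn/(n+p)$; (ii) a sub-Poincar\'e inequality at level $q$ with gradient exponent $q\theta$, requiring $q\theta\geq q_\ast:=qn/(n+q)$; and (iii) the same at level $q$ but with the lower gradient exponent $p\theta$, requiring $p\theta\geq q_\ast$. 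The binding condition is $\theta\geq qn/(p(n+q))$, and this quantity is strictly less than $1$ precisely because \eqref{eq:gap-exponents} forces $n(q-p)\leq p\alpha < pq$. I therefore fix $\theta:=qn/(p(n+q))$.

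For the $p$-part, sub-Poincar\'e followed by the pointwise bound $|\nabla u|^{p\theta}\leq \varphi(x,|\nabla u|)^\theta$ immediately gives
\begin{equation*}
    \bint_{B(z,r)} \frac{|u-(u)_{B(z,r)}|^p}{r^p}\,\dx
    \leq c\bigg(\bint_{B(z,r)}\varphi(x,|\nabla u|)^\theta\,\dx\bigg)^{1/\theta}.
\end{equation*}
For the $q$-part, I exploit the H\"older continuity of $a$, which on $B(z,r)$ yields $a(x)\leq a_{z,r}^- + [a]_{0;\alpha}(2r)^\alpha$. This splits $\bint_{B(z,r)} a(x)|u-(u)_{B(z,r)}|^q/r^q\,\dx$ into a ``non-degenerate'' and a ``degenerate'' piece, to which I apply the Sobolev--Poincar\'e inequality at level $q$ with gradient exponent $q\theta$ and $p\theta$, respectively.

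In the non-degenerate piece I move $a_{z,r}^-$ under the $1/\theta$-power and use $(a_{z,r}^-)^\theta|\nabla u|^{q\theta}\leq a(x)^\theta|\nabla u|^{q\theta}\leq \varphi(x,|\nabla u|)^\theta$ pointwise, which produces a bound of the form $c(\bint\varphi^\theta)^{1/\theta}$. In the degenerate piece, Sobolev--Poincar\'e with gradient exponent $p\theta$ yields the factor $(\bint|\nabla u|^{p\theta})^{q/(p\theta)}$, which I rewrite as $(\bint|\nabla u|^{p\theta})^{1/\theta}\cdot(\bint|\nabla u|^{p\theta})^{(q-p)/(p\theta)}$ and bound the second factor via Jensen by $(\bint|\nabla u|^p)^{(q-p)/p}\leq c\, r^{-n(q-p)/p}\|\nabla u\|_{L^p(\Omega)}^{q-p}$. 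The residual prefactor $r^{\alpha-n(q-p)/p}$ is at most $1$, again by \eqref{eq:gap-exponents} together with $r\leq 1$, so the degenerate piece is controlled by $c\,[a]_{0;\alpha}\|\nabla u\|_{L^p(\Omega)}^{q-p}(\bint\varphi^\theta)^{1/\theta}$.

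Summing the three estimates produces the claimed inequality. The main obstacle is a simultaneous balancing: the gap condition \eqref{eq:gap-exponents} is invoked twice---once to guarantee that a single $\theta\in(0,1)$ is admissible for every Sobolev--Poincar\'e application above, and once to ensure that the scaling exponent $\alpha-n(q-p)/p$ appearing in the degenerate $q$-piece is non-negative. The zero-trace variant for $u\in W^{1,p}_0(B(z,r))$ is obtained by the same argument after replacing the oscillation sub-Poincar\'e inequalities by their zero-boundary counterparts, which allows one to drop $(u)_{B(z,r)}$.
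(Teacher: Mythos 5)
Your strategy---split $\varphi(x,t)=t^p+a(x)t^q$, then split $a(x)\leq a_{z,r}^-+[a]_{0;\alpha}(2r)^\alpha$ and handle the resulting three pieces by Sobolev--Poincar\'e estimates---is sound and is essentially the argument underlying the cited \cite[Theorem 2.13]{Ok2017}. Both of your invocations of \eqref{eq:gap-exponents} (to admit a $\theta<1$ and to make the exponent $\alpha-n(q-p)/p$ non-negative) are at the right places. However, fixing $\theta=qn/(p(n+q))$ leaves a genuine gap: each Sobolev--Poincar\'e application additionally requires the gradient exponent to be at least $1$. The inequality
\begin{equation*}
    \Big(\bint_{B}|u-(u)_B|^s\,\dx\Big)^{1/s}\leq c\,r\Big(\bint_{B}|\nabla u|^t\,\dx\Big)^{1/t}
\end{equation*}
is false for $t<1$, even for $u\in W^{1,1}(B)$ with $s\leq n/(n-1)$; take $u$ transitioning from $1$ to $0$ across an annulus of width $\eta$, so that the left side stays bounded away from zero while the right side behaves like $\eta^{(1-t)/t}\to 0$. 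With your $\theta$ one has $p\theta=qn/(n+q)$, which drops below $1$ precisely when $q<n/(n-1)$. This regime is compatible with all the hypotheses (e.g.\ $n=3$, $p=1.1$, $q=1.4$, $\alpha=1$), and then all three of your Sobolev--Poincar\'e steps use gradient exponent $<1$, so the argument breaks. The constraint $p\theta\geq 1$, i.e.\ $\theta\geq 1/p$, must be added to your list (i)--(iii).

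The fix is small: set
\begin{equation*}
    \theta:=\max\Big\{\tfrac{1}{p},\,\tfrac{qn}{p(n+q)}\Big\}.
\end{equation*}
This is still $<1$. If $q_\ast:=qn/(n+q)\geq 1$, then $\theta=qn/(p(n+q))$ and your computation via \eqref{eq:gap-exponents} applies. If $q_\ast<1$, then $\theta=1/p<1$, and the remaining conditions $p\theta\geq pn/(n+p)$ and $q\theta\geq q_\ast$ become $p\leq n/(n-1)$ and $q\geq n(p-1)$, both of which follow from $p<q<n/(n-1)$. With this corrected $\theta$, the rest of your argument goes through unchanged.
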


Next, for the $p$-Hardy inequality, cf.~\cite[Theorem 6.25]{KLV21}. The following result was first proven for $p = 2$ in \cite{A86}, and it was generalized for $p \ge 1$ in \cite{L88, W90}. While global $p$-fatness is assumed in \cite[Theorem 6.25]{KLV21}, the proof shows that an analogous statement holds for bounded sets $\Omega$ under the weaker local $p$-fatness condition.

\begin{lemma}
\label{lem:p-Hardy}
Assume that $\Omega \subset \R^n$ is a bounded open set such that $\R^n \setminus \Omega$ is locally $p$-fat with parameters $c_\mathrm{fat}>0$ and $r_o\in (0,1]$, with $1 < p < +\infty$.
Then, there exists a constant $c=c(n,p,r_o, c_\mathrm{fat})$ such that the $p$-Hardy inequality
$$
    \int_\Omega \frac{|u(x)|^p}{\dist(x,\partial\Omega)^p} \,\dx
    \leq
    c \int_\Omega |\nabla u|^p \,\dx
$$
holds for any $u \in \Lip_0(\Omega)$.
\qed
\end{lemma}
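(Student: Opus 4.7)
The plan is to follow the classical proof of \cite[Theorem 6.25]{KLV21}: combine a Whitney-type decomposition of $\Omega$ with the Maz'ya inequality (Lemma \ref{lem:classical-Mazya}) applied on boundary balls, with the only adjustment being to truncate the balls at scale $r_o$ so as to remain within the reach of the local $p$-fatness assumption.

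Concretely, I would first fix a Whitney decomposition $\{Q_i\}_{i \in I}$ of $\Omega$ into dyadic cubes with $\diam(Q_i) \approx \dist(Q_i, \partial\Omega)$ and the standard bounded overlap for a mild dilation of the family. For each $i$, choose a closest boundary point $z_i \in \partial\Omega$ and set $r_i := \min\{C_n \diam(Q_i),\, r_o\}$ for a dimensional constant $C_n$ large enough that $Q_i \subset B(z_i, r_i)$ whenever the first entry of the minimum is active. On any such ball $B(z_i, r_i)$, the local $p$-fatness of $\R^n \setminus \Omega$, read in the equivalent formulation of Remark \ref{rem:formulations-fatness} and combined with the scaling \eqref{eq:scaling-p-capacity}, produces
$$
    \cp_p\left((\R^n \setminus \Omega) \cap \overline{B(z_i, r_i/2)};\, B(z_i, r_i)\right)
    \geq
    c(n, p, c_\mathrm{fat})\, r_i^{n-p}.
$$
Since $u \in \Lip_0(\Omega)$ vanishes on $\R^n \setminus \Omega$, Lemma \ref{lem:classical-Mazya} with $s = p$ then yields the ball-wise bound
$$
    \int_{B(z_i, r_i)} |u|^p \,\dx
    \leq
    c(n, p, c_\mathrm{fat})\, r_i^p \int_{B(z_i, r_i)} |\nabla u|^p \,\dx.
$$

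For the \emph{near-boundary} cubes satisfying $\diam(Q_i) \leq r_o/C_n$, one has $r_i \approx \diam(Q_i) \approx \dist(\cdot, \partial\Omega)$ on $Q_i$, so the preceding estimate immediately gives $\int_{Q_i} |u|^p / \dist(x, \partial\Omega)^p \,\dx \leq c \int_{B(z_i, r_i)} |\nabla u|^p \,\dx$. Summing over this subfamily and invoking bounded overlap of the dilated boundary balls produces the Hardy estimate on the near-boundary region $\Omega_{r_o} := \{x \in \Omega : \dist(x, \partial\Omega) < r_o/C_n\}$.

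The main obstacle, and the only real departure from the global-fatness argument in \cite{KLV21}, lies in the \emph{far-from-boundary} region $\Omega \setminus \Omega_{r_o}$, where $\dist(\cdot, \partial\Omega) \gtrsim r_o$ and no single boundary ball of admissible radius $\leq r_o$ can contain the corresponding Whitney cube. There the integrand satisfies $|u|^p / \dist(\cdot, \partial\Omega)^p \leq c\, r_o^{-p} |u|^p$, so it suffices to control $\int_{\Omega \setminus \Omega_{r_o}} |u|^p$ by $\int_\Omega |\nabla u|^p$ with a constant depending only on $n, p, r_o, c_\mathrm{fat}$. I would achieve this by chaining the near-boundary estimate with standard Poincaré inequalities along Whitney chains connecting each far-from-boundary cube to $\Omega_{r_o}$, propagating the local bound from the boundary region inward and absorbing the resulting factor into $c(n, p, r_o, c_\mathrm{fat})$.
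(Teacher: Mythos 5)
The paper does not give its own proof of this lemma; it is cited as a local variant of \cite[Theorem 6.25]{KLV21}, whose argument passes through self-improvement of $p$-fatness, a pointwise Hardy inequality with an exponent $s<p$, and the strong-type maximal function estimate at the super-unitary exponent $p/s$. Your sketch instead attempts a direct Whitney-cube summation, and this has a genuine gap at the near-boundary summation step.

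The ball-wise Maz'ya estimate $\int_{Q_i}|u|^p/\dist(x,\partial\Omega)^p\,\dx \le c\int_{B(z_i,r_i)}|\nabla u|^p\,\dx$ is correct for each near-boundary cube $Q_i$, but when you sum over all such cubes the balls $B(z_i,r_i)$ do \emph{not} have bounded overlap. Within a single Whitney generation the overlap is at most $c(n)$, yet for a point $x$ at distance $t$ from $\partial\Omega$, every generation with $\diam(Q)\geq c\,t$ contributes a ball through $x$, so the multiplicity at $x$ grows like $\log(\diam(\Omega)/t)$. Summing your estimate therefore yields at best
$$
\int_{\Omega_{r_o}}\frac{|u|^p}{\dist(x,\partial\Omega)^p}\,\dx
\leq
c\int_\Omega|\nabla u(x)|^p\,\log\frac{\diam(\Omega)}{\dist(x,\partial\Omega)}\,\dx,
$$
and for $u\in\Lip_0(\Omega)$ the resulting constant depends on $\dist(\spt u,\partial\Omega)$, i.e.\ on $u$ itself. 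This logarithmic loss is exactly the obstruction that the known proofs are built to circumvent. Wannebo's device, which is the one used in this paper's proof of Theorem~\ref{thm:int_har_ineq} (see the manipulations leading from \eqref{eq:Hardy-aux2} through \eqref{eq:Hardy-aux7}), is to first prove the inequality with an extra weight $\dist^{-\beta p}$ on both sides, so that the contributions from successive generations decay geometrically and give a factor $\beta^{-1}$, and then reabsorb by applying the estimate to $v=\dist^\beta u$ and choosing $\beta$ small. The route in \cite{KLV21} avoids the summation altogether by using the self-improved $(p-\varepsilon)$-fatness to obtain a pointwise $(p-\varepsilon)$-Hardy inequality and then invoking the Hardy--Littlewood maximal theorem at exponent $p/(p-\varepsilon)>1$. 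One of these repairs is needed; without it, your near-boundary step does not close. Your treatment of the far-from-boundary region by chaining is likewise too vague to assess, and a naive chain-length count would introduce a $\diam(\Omega)$-dependent constant, contradicting the claimed form $c(n,p,r_o,c_\mathrm{fat})$; in both standard approaches this region is handled simultaneously with the near-boundary one and requires no chaining.
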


In the following, we use a Whitney cube decomposition of $\Omega$, i.e.~a decomposition into cubes whose diameter is comparable to their distance from $\partial\Omega$. The following statement can be found in \cite[Chapter VI]{Stein}, see also \cite[Lemma 8.9]{KLV21}.
\begin{lemma}
\label{lem:Whitney}
Assume that $\Omega \subsetneq \R^n$ is a non-empty open set.
Then there exist pairwise disjoint half-open dyadic cubes $Q_i \subset \Omega$, $i \in \N$, such that $\Omega = \bigcup_{i=1}^\infty Q_i$ and
$$
	\diam(Q_i)
	\leq
	\dist(Q_i,\partial\Omega)
	\leq
	4\diam(Q_i)
	\quad \forall \, i \in \N.
$$
\qed
\end{lemma}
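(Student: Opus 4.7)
The plan is to construct the family $(Q_i)_i$ by selecting maximal elements of a natural preliminary collection of dyadic subcubes of $\Omega$. Concretely, I would work with standard half-open dyadic cubes of the form $\prod_{j=1}^{n}[m_j 2^{-k},(m_j+1)2^{-k})$ with $m_j \in \Z$ and $k \in \Z$, and introduce the preliminary family
\begin{equation*}
    \mathcal{W}_0 := \left\{ Q \text{ half-open dyadic} : Q \subset \Omega \text{ and } \diam(Q) \leq \dist(Q,\partial\Omega) \right\}.
\end{equation*}
Then I would define $\mathcal{W}$ to be the inclusion-maximal elements of $\mathcal{W}_0$. There are only countably many dyadic cubes, so I can enumerate $\mathcal{W} = \{Q_i\}_{i \in \N}$.

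The pairwise disjointness is immediate from the dyadic structure: two dyadic cubes are either nested or disjoint, and since we picked maximal elements, no two members of $\mathcal{W}$ are nested; using half-open cubes rather than closed ones ensures genuine disjointness as sets. The lower inequality $\diam(Q_i) \leq \dist(Q_i,\partial\Omega)$ is built into the definition of $\mathcal{W}_0$. For the covering property, I would use that $\Omega$ is open and nonempty: given $x \in \Omega$, one has $\dist(x,\partial\Omega) > 0$, and the dyadic cube of sufficiently large $k$ containing $x$ lies in $\mathcal{W}_0$; hence $x$ is contained in the (unique) maximal dyadic ancestor of that small cube which still lies in $\mathcal{W}_0$, and this ancestor belongs to $\mathcal{W}$.

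The remaining and slightly more delicate step is the upper estimate $\dist(Q_i,\partial\Omega) \leq 4\diam(Q_i)$. I would argue via the parent dyadic cube $\widehat{Q}_i$, which satisfies $\diam(\widehat{Q}_i) = 2\diam(Q_i)$. By maximality of $Q_i$ in $\mathcal{W}_0$, the parent $\widehat{Q}_i$ fails one of the two defining conditions of $\mathcal{W}_0$. If $\widehat{Q}_i \not\subset \Omega$, there exists $y \in \widehat{Q}_i \cap (\R^n \setminus \Omega)$; since $Q_i \subset \Omega$, any segment from a point of $Q_i$ to $y$ crosses $\partial\Omega$, yielding $\dist(Q_i,\partial\Omega) \leq \diam(\widehat{Q}_i) = 2\diam(Q_i)$. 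Otherwise $\diam(\widehat{Q}_i) > \dist(\widehat{Q}_i,\partial\Omega)$, and the triangle inequality combined with $Q_i \subset \widehat{Q}_i$ gives
\begin{equation*}
    \dist(Q_i,\partial\Omega) \leq \diam(\widehat{Q}_i) + \dist(\widehat{Q}_i,\partial\Omega) < 2\diam(\widehat{Q}_i) = 4\diam(Q_i).
\end{equation*}

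There is no real mathematical obstacle here since the result is classical; the only care required is the bookkeeping on dyadic cubes (parents, ratios of diameters) and the choice of half-open representatives to upgrade interior-disjointness to disjointness. An alternative, which I might sketch as a remark, is to directly stratify $\Omega$ by the sets $\Omega_k := \{x \in \Omega : 2^{-k} < \dist(x,\partial\Omega) \leq 2^{-k+1}\}$ and pick dyadic cubes of appropriate scale meeting each $\Omega_k$, but the maximality argument above yields the stated constants $1$ and $4$ most transparently.
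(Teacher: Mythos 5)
Your proof is correct and follows the standard maximality-over-dyadic-cubes construction; the paper itself gives no argument for this lemma, only citing Stein's \emph{Singular integrals} (Chapter VI) and \cite[Lemma 8.9]{KLV21}. Your construction---taking inclusion-maximal half-open dyadic cubes among those with $Q\subset\Omega$ and $\diam(Q)\le\dist(Q,\partial\Omega)$, then deriving the upper bound $\dist(Q_i,\partial\Omega)\le 4\diam(Q_i)$ from the failure of the parent cube $\widehat{Q}_i$ to satisfy the defining conditions---is precisely the classical proof with the stated constants.
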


\begin{remark}
\label{rem:bounded-overlap}
Let $\Omega \subset \R^n$ be an open set and assume that $\W$ is a Whitney cube decomposition of $\Omega$ according to Lemma \ref{lem:Whitney}.
For $k \in \Z$, define
\begin{equation*}
    \W_k := \big\{ Q \in \W : \diam(Q) = 2^k \big\}.
\end{equation*}
Checking the construction in \cite[Chapter VI]{Stein}, we observe that for any $k \in \Z$ the Whitney cubes $Q \in \W_k$ have vertices in $2^k\Z^n$.
Therefore, for any fixed constant $C \geq 1$ and $k \in \Z$, the enlarged cubes $CQ$ with side length $C2^k$ and the same center as $Q$ have bounded overlap depending only on $C$ and $n$ in the sense that
$$
	\sum_{Q \in \W_k} \chi_{CQ}(x)
	\leq
	c(n,C)
	\qquad \forall \, x \in \Omega.
$$
\end{remark}

The following result concerning the restricted centered maximal operator, defined in Definition \ref{def:max_op}, can be shown by combining \cite[Remark 1.29]{KLV21}, \cite[Theorem 3.4]{KLV21} and \cite[Inequality (4.1)]{KLV21}, see also \cite[Inequality (6.12)]{KLV21}.

\begin{lemma}
\label{lem:aux-equivalence}
Let $u \in \Lip(\R^n)$, $z \in \R^n$, $r>0$ and $s\in (1,\infty)$.
Then, we have that
\begin{equation*}
    |u(z)-(u)_{B(z,r)}|^s \leq C(n,s) M_{2r}(|r\nabla u|^s )(z).
\end{equation*}
\qed
\end{lemma}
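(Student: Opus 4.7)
The plan is to establish the pointwise estimate via a standard dyadic telescoping argument combining Lebesgue differentiation and the classical Poincaré inequality. Since $u$ is Lipschitz, it is continuous at $z$, and the Lebesgue differentiation theorem gives $u(z) = \lim_{k\to\infty} (u)_{B(z,2^{-k}r)}$. Therefore I can express
\begin{equation*}
    u(z) - (u)_{B(z,r)} = \sum_{k=0}^{\infty} \bigl[(u)_{B(z,2^{-k-1}r)} - (u)_{B(z,2^{-k}r)}\bigr],
\end{equation*}
and reduce the problem to controlling each dyadic jump.

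For a single jump, I would use that $B(z,2^{-k-1}r) \subset B(z,2^{-k}r)$ with comparable volume, so
\begin{equation*}
    \bigl|(u)_{B(z,2^{-k-1}r)} - (u)_{B(z,2^{-k}r)}\bigr|
    \leq
    c(n)\bint_{B(z,2^{-k}r)} \bigl|u - (u)_{B(z,2^{-k}r)}\bigr| \,\dx.
\end{equation*}
Then the standard $L^1$-Poincaré inequality on $B(z,2^{-k}r)$ produces the factor $c(n)\,2^{-k}r\, \bint_{B(z,2^{-k}r)}|\nabla u|\,\dx$, after which Hölder's inequality with exponents $s$ and $s/(s-1)$ yields
\begin{equation*}
    \bint_{B(z,2^{-k}r)}|\nabla u|\,\dx
    \leq
    \left(\bint_{B(z,2^{-k}r)}|\nabla u|^s\,\dx\right)^{1/s}.
\end{equation*}
Since $0 < 2^{-k}r < 2r$, the right-hand side is bounded by $M_{2r}(|\nabla u|^s)(z)^{1/s}$, and the identity $M_{2r}(|r\nabla u|^s)(z) = r^s M_{2r}(|\nabla u|^s)(z)$ converts this to $r^{-1} M_{2r}(|r\nabla u|^s)(z)^{1/s}$.

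Putting these pieces together gives the bound
\begin{equation*}
    \bigl|(u)_{B(z,2^{-k-1}r)} - (u)_{B(z,2^{-k}r)}\bigr|
    \leq
    c(n)\,2^{-k}\, M_{2r}(|r\nabla u|^s)(z)^{1/s}
\end{equation*}
for each $k \geq 0$. Summing the resulting geometric series and raising the inequality to the $s$-th power produces the claim with a constant $C(n,s)$. There is no real obstacle here; the only minor point is that in the definition of $M_R$ the case $R(x)=0$ is irrelevant because $R \equiv 2r > 0$, so the supremum formulation applies directly throughout the argument.
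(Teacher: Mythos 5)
Your proof is correct. The paper itself offers no proof, only a pointer to the results in \cite{KLV21} (Remark~1.29, Theorem~3.4, and Inequalities~(4.1) and~(6.12) there), where the standard route passes through a pointwise Riesz-potential bound $|u(x)-(u)_{B}|\lesssim \int_B |\nabla u(y)||x-y|^{1-n}\,\dy$ followed by the estimate of the Riesz potential by the restricted maximal function and an application of H\"older's inequality. Your argument reaches the same intermediate pointwise estimate $|u(z)-(u)_{B(z,r)}|\leq c(n)\,r\,M_{2r}(|\nabla u|)(z)$ by a dyadic telescoping of averages combined with the scale-invariant $L^1$-Poincar\'e inequality on nested balls; the convergence of the telescoping series uses the continuity of $u$, which you correctly invoke from the Lipschitz hypothesis. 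The subsequent use of H\"older's inequality to pass to $M_{2r}(|\nabla u|^s)^{1/s}$, the homogeneity $M_{2r}(|r\nabla u|^s)=r^sM_{2r}(|\nabla u|^s)$, and the final raising to the $s$-th power are all correct, and the verification that every radius $2^{-k}r$ lies strictly below $2r$ is exactly what the restricted maximal operator requires. The two arguments are thus essentially equivalent in substance; yours has the advantage of being entirely elementary and self-contained, avoiding the Riesz-potential machinery, at the modest cost of spelling out the dyadic chain explicitly.
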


Now we recall the following iteration lemma, which is a special case of \cite[Lemma 6.1]{G03}.

\begin{lemma}
\label{lem:iteration}
Let  $\vartheta \in [0,1)$, $A,B \geq 0$ and $\alpha>0$.
Further, let $0<a<b$ and assume that $\phi \colon [a,b] \to [0,+\infty)$ is a bounded function satisfying
$$
    \phi(t)
    \leq
    \vartheta \phi(s) + \frac{A}{(s-t)^\alpha} + B
    \qquad \text{for all } a \leq t < s \leq b.
$$
Then, there exists a constant $c=c(\alpha,\vartheta)$ such that
$$
    \phi(a)
    \leq
    c \left[ \frac{A}{(b-a)^\alpha} + B\right].
$$
\qed
\end{lemma}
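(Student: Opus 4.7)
The plan is to implement the standard hole-filling/geometric iteration argument. I would introduce a sequence $(t_i)_{i\in\mathbb{N}} \subset [a,b]$ defined by
\begin{equation*}
    t_0 := a, \qquad t_{i+1} := t_i + (1-\tau)\tau^i (b-a),
\end{equation*}
where $\tau \in (0,1)$ is a parameter to be chosen at the end. By construction $t_i = a + (1-\tau^i)(b-a) \nearrow b$, the sequence is strictly increasing, and $t_{i+1}-t_i = (1-\tau)\tau^i(b-a)$.

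Next, I would apply the hypothesis with $t=t_i$ and $s=t_{i+1}$ to obtain
\begin{equation*}
    \phi(t_i) \leq \vartheta\, \phi(t_{i+1}) + \frac{A}{(1-\tau)^\alpha \tau^{i\alpha} (b-a)^\alpha} + B,
\end{equation*}
and iterate $k$ times starting from $i=0$. A straightforward induction yields
\begin{equation*}
    \phi(a) \leq \vartheta^{k+1} \phi(t_{k+1}) + \frac{A}{(1-\tau)^\alpha (b-a)^\alpha} \sum_{i=0}^{k} \left(\frac{\vartheta}{\tau^\alpha}\right)^{i} + B \sum_{i=0}^{k} \vartheta^{i}.
\end{equation*}

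The key step is choosing $\tau$ so that the geometric series on the right converges. Since $\vartheta \in [0,1)$, I can pick $\tau \in (0,1)$ with $\tau^\alpha = \tfrac{1+\vartheta}{2}$, which gives $\vartheta/\tau^\alpha = \tfrac{2\vartheta}{1+\vartheta} < 1$ (the case $\vartheta=0$ is trivial and can be handled separately). Both series are then bounded by constants depending only on $\alpha$ and $\vartheta$. Finally, letting $k\to\infty$ and using the boundedness of $\phi$ together with $\vartheta^{k+1}\to 0$, the first term vanishes and I obtain the desired estimate with $c = c(\alpha,\vartheta)$.

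There is no real obstacle here; the argument is essentially a one-parameter optimization. The only subtlety worth flagging is that one must invoke the boundedness hypothesis on $\phi$ to kill the term $\vartheta^{k+1}\phi(t_{k+1})$ in the limit, which is why the lemma is stated for bounded $\phi$ rather than general nonnegative $\phi$.
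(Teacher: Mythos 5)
Your argument is correct and is precisely the standard geometric-iteration proof found in the cited reference \cite[Lemma 6.1]{G03}; the paper itself states this lemma without proof, simply citing Giusti. Your choice $\tau^\alpha=\tfrac{1+\vartheta}{2}$ correctly ensures $\vartheta/\tau^\alpha<1$, and you rightly flag that the boundedness of $\phi$ is exactly what lets $\vartheta^{k+1}\phi(t_{k+1})\to 0$.
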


The higher integrability will be proven by means of a Gehring-type inequality. According to \cite[Corollary 6.1 \& Theorem 6.6]{G03} and \cite[Lemma 3.7]{Karp21} we have the following result.
\begin{lemma}
\label{Gehring's Lemma}
    Let $z \in \R^n$ and let $r > 0$. Let $g\in L^1(B(z,r))$ and $h\in L^s(B(z,7r))$ for some $s>1$ be non-negative functions. If there exist $\theta\in (0,1)$ and $c_*>0$ such that
    \begin{equation*}
        \bint_{B(z,r)} g \, \dx \leq c_* \Bigg[ \bigg(\bint_{B(z,6r)} g^\theta \, \dx \bigg)^{\frac{1}{\theta}} + \bint_{B(z,6r)} h \, dx \Bigg]
    \end{equation*}
    then there exist $c=c(n,s,\theta,c_*)>0$ and $\sigma=\sigma(n,s,\theta,c_*)>1$ for which there holds
    \begin{equation*}
        \bigg(\bint_{B(z,r)} g^\sigma \, \dx \bigg)^{\frac{1}{\sigma}} \leq c \Bigg[ \bint_{B(z,8r)} g \, \dx + \bigg(\bint_{B(z,8r)} h^\sigma \, \dx \bigg)^{\frac{1}{\sigma}} \Bigg].
    \end{equation*}
    \qed
\end{lemma}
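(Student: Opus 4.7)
The plan is to follow the classical Gehring strategy, as developed in \cite{G03}, which converts a reverse--Hölder-type inequality into a genuine self-improvement of integrability. I interpret the hypothesis in the standard way, namely as holding with the same constants $\theta$ and $c_*$ on every ball whose sixfold concentric enlargement lies inside $B(z,8r)$, which is the convention used in the cited references.

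First I would fix a threshold
\begin{equation*}
    \lambda_0 \sim \bint_{B(z,8r)} g \,\dx + \bigg( \bint_{B(z,8r)} h^s \,\dx \bigg)^{1/s},
\end{equation*}
and for each $\lambda > \lambda_0$ run a Calderón--Zygmund/Vitali stopping-time argument at level $\lambda^\theta$ applied to the function $g^\theta$. This yields a pairwise disjoint family of balls $\{B_i\} \subset B(z,8r)$ on which $\bint_{B_i} g^\theta \,\dx$ is comparable to $\lambda^\theta$, and whose controlled enlargements cover $\{g > A\lambda\} \cap B(z,r)$ up to a null set (by Lebesgue differentiation). On each $B_i$ the reverse--Hölder hypothesis gives
\begin{equation*}
    \bint_{B_i} g \,\dx \leq c_* \bigg[ \lambda + \bint_{6 B_i} h \,\dx \bigg].
\end{equation*}

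Next I would assemble these local estimates, using the disjointness of the $B_i$ and the bounded overlap of their enlargements, into a good-$\lambda$ inequality of the form
\begin{equation*}
    \int_{\{g > A\lambda\} \cap B(z,r)} g \,\dx
    \leq c_1 \int_{\{g > \lambda\} \cap B(z,8r)} g \,\dx
    + c_1 \int_{\{h > \eta\lambda\} \cap B(z,8r)} h \,\dx,
\end{equation*}
for constants $A,c_1,\eta$ depending only on $n,\theta$ and $c_*$. Multiplying by $\lambda^{\sigma-2}$ and integrating over $\lambda \in (\lambda_0,\infty)$, Fubini turns the left-hand side into (a constant multiple of) $\int_{B(z,r)} g^\sigma \,\dx$, up to a controlled low-level contribution bounded by $\lambda_0^\sigma |B(z,r)|$, while the right-hand side produces $\tfrac{c_1}{\sigma-1} \bigl[ \int_{B(z,8r)} g^\sigma \,\dx + \int_{B(z,8r)} h^\sigma \,\dx \bigr]$.

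The main technical obstacle, and the step that pins down the admissible range of $\sigma$, is the final absorption: one must choose $\sigma > 1$ so close to $1$ that $\tfrac{c_1}{A^\sigma(\sigma-1)} < 1$, so that the $g^\sigma$ term on the right can be moved to the left. This requires tracking the covering constants quantitatively, which yields $\sigma = \sigma(n,s,\theta,c_*) > 1$. One additionally imposes $\sigma \leq s$ to ensure that the $h^\sigma$ integral is finite via the assumption $h \in L^s(B(z,7r))$; rescaling $\lambda_0$ back through Hölder's inequality then produces the claimed estimate with constant $c = c(n,s,\theta,c_*)$.
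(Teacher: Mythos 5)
The paper does not actually prove this lemma; it is stated with a concluding \qed and referenced to \cite[Corollary 6.1 \& Theorem 6.6]{G03} and \cite[Lemma 3.7]{Karp21}, exactly like the other preliminary lemmas in Section~\ref{sec:aux}. Your proposal therefore supplies an argument where the paper gives a citation, and the argument you give is the classical Gehring strategy: Calderón--Zygmund/Vitali stopping time applied to $g^\theta$, a good-$\lambda$ inequality, Cavalieri/Fubini integration against $\lambda^{\sigma-2}$, and absorption for $\sigma$ close to $1$. This is indeed the route taken in \cite{G03}, so in spirit you are reproving what the paper imports as a black box.

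Your observation that the hypothesis must be read as a reverse Hölder inequality for a \emph{family} of balls (not just for the single fixed pair $B(z,r)$, $B(z,6r)$) is essential and correct: a single reverse Hölder inequality on one ball carries no self-improving information. This reading also matches how the lemma is invoked in the proof of Theorem~\ref{thm:higher_int}, where the Caccioppoli--Maz'ya estimate is established for every center $z\in\Omega$; the ratios $1{:}6{:}7{:}8$ are otherwise inessential covering parameters. One small slip: the absorption condition you wrote, $c_1/(A^\sigma(\sigma-1))<1$, becomes \emph{harder} to satisfy as $\sigma\to1^+$, which is the wrong direction. After the Fubini step the factor $1/(\sigma-1)$ appears on both sides and cancels (or, in the level-set variant, one obtains $c_1(\sigma-1)A^{\sigma-1}$ as the small quantity), so the correct smallness condition holds precisely when $\sigma-1$ is small. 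Similarly, in the good-$\lambda$ inequality the right-hand term is customarily $c_1\lambda\,|\{g>\lambda\}\cap B(z,8r)|$ rather than $c_1\int_{\{g>\lambda\}}g$; with your form the ratio $A^{1-\sigma}$ versus $c_1$ does not close if $c_1>1$. These are fixable bookkeeping issues in an otherwise sound sketch.
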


%*********************************************************************

\section{A Maz'ya type inequality involving the double-phase functional}
\label{sec:Mazya-type-ineq}

In this section, we show the following Maz'ya type inequality for certain functionals with $(p,q)$-growth. 

\begin{lemma}[Maz'ya type inequality]
\label{lem:protoMazya-type-ineq}
Let $z \in \R^n$, $r > 0$, $u \in \Lip(B(z,r))$. Assume \eqref{as:a} holds and let $1<p<q<+\infty$ be such that \eqref{eq:gap-exponents} holds. 
Let $1\leq m \leq p$ and if $m<n$, additionally assume that $q\leq \frac{nm}{n-m}$.
Further, define
\begin{equation*}
    Z:=\left\{x \in \overline{B\left(0,\tfrac{1}{2}\right)} : u(z+rx)=0\right\}
\end{equation*}
and let $y\in \R^n$ and $R>0$ be arbitrary.
Then there exists a constant $c=c(n,p,q,m)$ such that
\begin{align*}
    \bint_{B(z,r)} & \varphi^\pm_{y,R}(|u|) \,\dx
    \\ &\leq
    \frac{c(n,p,q,m) \big[ 1 + a^\pm_{y,R} \cp_m(Z;B(0,1))^{\frac{p-q}{m}} (r \nabla u)_{B(z,r),m}^{q-p} \big]}{\cp_m(Z;B(0,1))^{\frac{p}{m}}} 
    \bigg( \bint_{B(z,r)} |r \nabla u|^m \,\dx \bigg)^{\frac{p}{m}}.
\end{align*}
\end{lemma}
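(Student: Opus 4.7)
The strategy is to reduce everything to the classical Maz'ya inequality (Lemma \ref{lem:classical-Mazya}) applied with exponent $m$ in place of $p$, used once with $s=p$ and once with $s=q$, and then to recombine the two estimates using the structure $\varphi^\pm_{y,R}(t)=t^p+a^\pm_{y,R}t^q$.

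\textbf{Step 1 (rescaling of the capacity).} A standard change of variables $y = z+rx$ on admissible test functions shows, exactly as in the computation leading to \eqref{eq:equivalence-density-aux}, that
\begin{equation*}
    \cp_m\bigl(\{u=0\}\cap \overline{B(z,r/2)};B(z,r)\bigr) \,=\, r^{n-m}\, \cp_m(Z;B(0,1)).
\end{equation*}
This is what allows me to express the final bound purely in terms of $\cp_m(Z;B(0,1))$ and $(r\nabla u)_{B(z,r),m}$.

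\textbf{Step 2 (two applications of Maz'ya).} Under the assumption $1\leq m\leq p$ with $q\leq \frac{nm}{n-m}$ when $m<n$ (and any $q$ when $m\geq n$), both $s=p$ and $s=q$ lie in the admissible Sobolev range for the Maz'ya inequality with exponent $m$, because $p<q$. Applying Lemma \ref{lem:classical-Mazya} (in the $m$-version) to $u$ on $B(z,r)$ with $s=p$ and then with $s=q$, converting the $L^m$-integral on the right-hand side into an average, and using Step 1 to absorb the factor $|B(z,r)|\,r^{m-n}=c(n)\,r^m$ into $(r\nabla u)$, I obtain
\begin{equation*}
    \bint_{B(z,r)} |u|^s\,\dx \,\leq\, \frac{c(n,p,q,m)}{\cp_m(Z;B(0,1))^{s/m}}\, (r\nabla u)_{B(z,r),m}^{s}\qquad\text{for }s\in\{p,q\}.
\end{equation*}

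\textbf{Step 3 (recombination).} Multiplying the $s=q$ inequality by $a^\pm_{y,R}$, adding it to the $s=p$ inequality, and using $\varphi^\pm_{y,R}(|u|)=|u|^p+a^\pm_{y,R}|u|^q$ gives
\begin{equation*}
    \bint_{B(z,r)}\!\varphi^\pm_{y,R}(|u|)\,\dx \,\leq\, \frac{c\,(r\nabla u)_{B(z,r),m}^{p}}{\cp_m(Z;B(0,1))^{p/m}}\,+\,a^\pm_{y,R}\,\frac{c\,(r\nabla u)_{B(z,r),m}^{q}}{\cp_m(Z;B(0,1))^{q/m}}.
\end{equation*}
Factoring the first term out of both summands yields exactly the claimed inequality, with the bracketed factor $1+a^\pm_{y,R}\,\cp_m(Z;B(0,1))^{(p-q)/m}(r\nabla u)_{B(z,r),m}^{q-p}$ in the numerator.

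\textbf{Expected difficulty.} There is no conceptual obstacle here; the result is essentially bookkeeping. The only thing to verify carefully is that the range condition on $s$ in Lemma \ref{lem:classical-Mazya} (with $m$ in the role of $p$) accommodates both $s=p$ and $s=q$, which is precisely what the assumption $q\leq \frac{nm}{n-m}$ (for $m<n$) guarantees together with $p<q$. One minor point is that Lemma \ref{lem:classical-Mazya} is stated for $p>1$; if one wishes to allow $m=1$ one invokes the standard $L^1$ Maz'ya–Sobolev variant, but the bulk of applications later in the paper only require $m>1$.
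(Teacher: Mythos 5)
Your proposal is correct and follows essentially the same path as the paper's own proof: both rescale $\cp_m(\{u=0\}\cap \overline{B(z,r/2)};B(z,r))=r^{n-m}\cp_m(Z;B(0,1))$ via the identity in \eqref{eq:equivalence-density-aux}, apply Lemma \ref{lem:classical-Mazya} with exponent $m$ once for $s=p$ and once for $s=q$, and then recombine using $\varphi^\pm_{y,R}(|u|)=|u|^p+a^\pm_{y,R}|u|^q$ and factor out the $s=p$ contribution. Your observation about the $m=1$ case (Lemma \ref{lem:classical-Mazya} being stated for $p>1$) is a fair technical remark, but as you note it does not affect the paper's applications, and the paper's proof glosses over it in the same way.
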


\begin{proof}
By \eqref{eq:equivalence-density-aux}, we can rewrite the classical Maz'ya inequality of Lemma \ref{lem:classical-Mazya} as, choosing the exponent $m$,
$$
    \bint_{B(z,r)} |u|^s \, \dx 
    \leq 
    \bigg( 
    \frac{c(n,m,s) r^{m-n}}{\cp_m(Z;B(0,1))} \int_{B(z,r)} |\nabla u|^m \, \dx
    \bigg)^{\frac{s}{m}}.
$$
Since Lemma \ref{lem:classical-Mazya} is applicable with $s=p$ and with $s=q$ due to \eqref{eq:gap-exponents} and the assumption of the lemma, we obtain that
\begin{align*}
    \bint_{B(z,r)} & \varphi^\pm_{y,R}(|u|) \,\dx
    =
    \bint_{B(z,r)} \left( |u|^p + a_{y,R}^\pm |u|^q \right) \dx 
    \\&\leq
    \bigg( 
    \frac{c(n,p,m) r^{-n}}{\cp_m(Z;B(0,1))} \int_{B(z,r)} |r \nabla u|^m \,\dx
    \bigg)^{\frac{p}{m}} 
    \\&\phantom{=}
    + a_{y,R}^\pm
    \bigg( 
    \frac{c(n,q,m) r^{-n}}{\cp_m(Z;B(0,1))} \int_{B(z,r)} |r\nabla u|^m \, \dx
    \bigg)^{\frac{q}{m}}
    \\&=
    \frac{c(n,p,m)}{\cp_m(Z;B(0,1))^{\frac{p}{m}}} \bigg( \bint_{B(z,r)} |r\nabla u|^m \, \dx
    \bigg)^{\frac{p}{m}}  
    \\&\phantom{=}
    +\frac{c(n,q,m) a^\pm_{y,R} \cp_m(Z;B(0,1))^{\frac{p-q}{m}} (r \nabla u)_{B(z,r),m}^{q-p}}{\cp_m(Z;B(0,1))^{\frac{p}{m}}} 
    \bigg( \bint_{B(z,r)} |r\nabla u|^m \, \dx
    \bigg)^{\frac{p}{m}}
    \\ &\leq
    \frac{c(n,p,q,m) \big[ 1 + a^\pm_{y,R} \cp_m(Z;B(0,1))^{\frac{p-q}{m}} (r \nabla u)_{B(z,r),m}^{q-p} \big]}{\cp_m(Z;B(0,1))^{\frac{p}{m}}} 
    \bigg( \bint_{B(z,r)} |r \nabla u|^m \,\dx \bigg)^{\frac{p}{m}}.
\end{align*}
This concludes the proof of the lemma.
\end{proof}

\begin{remark}
\label{rmk:choiceofm}
    The preceding premise $q\leq \frac{nm}{n-m}$ of the lemma is somewhat opaque. Accordingly, in the following we will derive a parameter range for $m$ where this inequality is satisfied. We start with the observation that the requirement \eqref{eq:gap-exponents} on the ratio $q/p$ implies the upper bound
    \begin{equation*}
        q\leq p\left( 1+\frac{\alpha}{n} \right)\leq p+\frac{p}{n}.
    \end{equation*}
    From this upper bound we can deduce for $1<p<n$ and $0\leq \hat{p} \leq p-1$ that
    \begin{equation*}
        \frac{n(p-\hat{p})}{n-p+\hat{p}} - q 
        \geq 
        \frac{n(p-\hat{p})}{n-p+\hat{p}} - \frac{np+p}{n}
        =
        \frac{p^2n-pn+p^2-\hat{p}(n^2+pn+p)}{n(n-p+\hat{p})}
    \end{equation*}
    and the latter is non-negative if
    \begin{equation*}
        0\leq \hat{p} \leq \frac{p^2n-pn+p^2}{n^2+pn+p}. 
    \end{equation*}
    We conclude that the assumption $q\leq \frac{nm}{n-m}$ of the previous lemma is satisfied if there holds 
    \begin{equation*}
        \max\left\{ 1, \frac{pn^2 +pn}{n^2+pn+p} \right\} \leq m \leq p.
    \end{equation*}
    In particular this parameter range is nonempty and contains $p$.
\end{remark}

\begin{example}
\label{rem:counterexample-Mazya}
At this stage, a natural question is whether a Maz'ya type inequality of the form
$$
	\bint_{B(z,r)} \varphi(x,|u|) \,\dx
	\leq
	\frac{C}{\cp_\varphi\left(\{u=0\} \cap \overline{B\left(z,\frac{r}{2}\right)}, B(z,r)\right)}
	\int_{B(z,r)} \varphi(x,|\nabla u|) \,\dx
$$
could hold with a capacity $\cp_\varphi$ that is better adapted to the double-phase integrand $\varphi$ (e.g.~the variational $\varphi$-capacity defined in \cite{BHH18}) and a constant depending on the data $n$, $p$, $q$, $a$ and possibly on $u$ or $\nabla u$ in a reasonable way.
In particular, we expect that $|u|$ and $|\nabla u|$ should appear on the right-hand side with a non-negative exponent.
However, the following counterexample shows that such an inequality can only hold with a notion of capacity satisfying $\cp_\varphi\left(\{u=0\} \cap \overline{B\left(z,\frac{r}{2}\right)}, B(z,r)\right) \leq \cp_p\left(\{u=0\} \cap \overline{B\left(z,\frac{r}{2}\right)}, B(z,r)\right)$ up to a multiplicative constant.
The problem already appears for the $x$-independent integrand
$$
    \varphi_o(s) := s^p + a_o s^q
$$
with $p \leq n$, $a_o>0$, and Lipschitz functions $u$ with small function values and small gradient in small balls.
\\
Indeed, consider balls $B(0,r)$ with $0<r \leq 1$ and functions $u_r \in \Lip(\R^n)$ given by $u_r(x) := \big(|x|-\frac{r}{2}\big)_+$.
Since $0 \leq u_r \leq 1$ in $B(0,r)$, we have that $u_r^q \leq u_r^p$ in $B(0,r)$ and thus that
$$
	\bint_{B(0,r)} u_r^p \,\dx
	\leq
	\bint_{B(0,r)} \varphi_o(u_r) \,\dx
	\leq
	(1+a_o) \bint_{B(0,r)} u_r^p \,\dx.
$$
Using polar coordinates, we compute that
$$
	\bint_{B(0,r)} u_r^p \,\dx
	=
	\bint_{B(0,r)} \Big(|x|-\frac{r}{2}\Big)_+^p \,\dx
	=
	c(n) r^{-n} \int_\frac{r}{2}^r \Big(t-\frac{r}{2}\Big)^p t^{n-1} \,\dt.
$$
Since
\begin{align*}
	\frac{1}{2^{n+p}(p+1)} r^{n+p}
	&=
	\Big(\frac{r}{2}\Big)^{n-1} \cdot \frac{1}{p+1} \Big(\frac{r}{2}\Big)^{p+1} \\
	&=
	\Big(\frac{r}{2}\Big)^{n-1} \int_\frac{r}{2}^r \Big(t-\frac{r}{2}\Big)^p \,\dt \\
	&\leq
	\int_\frac{r}{2}^r \Big(t-\frac{r}{2}\Big)^p t^{n-1} \,\dt \\
	&\leq
	r^{n-1} \int_\frac{r}{2}^r \big( t-\frac{r}{2}\Big)^p \,\dt
	=
	\frac{1}{2^{p+1}(p+1)} r^{n+p},
\end{align*}
combining the preceding three inequalities yields
$$
	c(n,p) r^p
        \le
        \bint_{B(0,r)} \varphi_o(u_r) \,\dx
	\le
	c(n,p,a_o) r^p.
$$
Next, note that $\nabla u_r(x) = \frac{x}{|x|} \chi_{\R^n \setminus B \left(0,\frac{r}{2}\right)}(x)$.
This means that
$$
	\int_{B(0,r)} \varphi_o(|\nabla u_r|) \,\dx
	=
	\int_{B(0,r) \setminus B\left(0,\frac{r}{2}\right)} (1 + a_o) \,\dx
	=
	c(n) (1+a_o) \Big(r^n - \Big(\frac{r}{2}\Big)^n \Big)
    =
    c(n) (1+a_o) r^n.
$$
We conclude that an inequality of the form
$$
    \bint_{B(0,r)} \varphi_o(u_r) \,\dx
    \leq
    Q(r) \int_{B(0,r)} \varphi_o(|\nabla u_r|) \,\dx
$$
can only hold for a quantity
$$
    Q(r)
    \geq
    c(n,p,a_o) r^{p-n}
    \ge
    \frac{c(n,p,a_o)}{\cp_p\left(\{u_r=0\} \cap \overline{B\left(0,\frac{r}{2}\right)}, B(0,r)\right)}.
$$
For the last estimate, we have taken \eqref{eq:scaling-p-capacity} and the fact that $\{u_r=0\} = \overline{B\left( 0,\frac{r}{2} \right)}$ into account.

In contrast, note that the variational $\varphi$-capacity of a compact set $K$ in an open set $O \supset K$ set is defined by
$$
	\cp_\varphi(K,O)
	=
	\inf\left\{
	\int_O \varphi(|\nabla u|) \,\dx :
	u \in W^{1,\varphi}(O;\R_{\geq 0}) \cap C^0_c(O), u \geq 1 \text{ in } K
	\right\},
$$
see \cite[Proposition 20]{BHH18}.
In our case, we have that $W^{1,\varphi}(B(0,r)) = W^{1,q}(B(0,r))$.
In order to estimate $\cp_\varphi\left(\{u_r = 0\} \cap \overline{B\left(0,\frac{r}{2}\right)}, B(0,r)\right)$, first note that by Poincaré's inequality
\begin{align*}
	c(n) r^n
	=
	\left| B\left(0,\tfrac{r}{2}\right) \right|
	\leq
	\int_{B\left(0,\frac{r}{2}\right)} |u|^q \,\dx
	\leq
	c(n,q) r^q \int_{B(0,r)} |\nabla u|^q \,\dx
\end{align*}
for any $u \in W^{1,q}_0(B(0,r))$ such that $u \geq 1$ in $B\left(0,\frac{r}{2}\right)$ and, analogously,
$$
	c(n) r^n
	\leq
	c(n,p) r^p \int_{B(0,r)} |\nabla u|^p \,\dx
$$
for the same class of functions.
Recalling that $r \leq 1$ and taking the infimum over all admissible functions in the definition of $\cp_\varphi$, this shows that
\begin{equation*}
\begin{aligned}
    r^{n-q}
	=
	\max\{r^{n-p},r^{n-q}\}
    &\leq
    \frac{1}{\min\{1,a_o\}} \big( r^{n-p} + a_o r^{n-q} \big) \\
    &\leq
	c(n,p,q,a_o) \cp_\varphi\left(\{u_r=0\} \cap \overline{B\left(0,\tfrac{r}{2}\right)}, B(0,r)\right).
\end{aligned}
\end{equation*}
Since for any constant $C$ there exists a radius $r_C$ such that $C r^{q-n} < r^{p-n}$ for any $0<r<r_C$, this in turn implies that 
$\cp_\varphi\left(\{u_r=0\} \cap \overline{B\left(0,\frac{r}{2}\right)}, B(0,r)\right)^{-1}$ is essentially smaller than $r^{p-n}$.
Therefore, a Maz'ya type inequality does not hold with this notion of capacity in the denominator on the right-hand side.
\end{example}

%*********************************************************************

\section{From the fatness condition to an integral Hardy inequality}
\label{sec:int-hardy}

This section is devoted to the proof of Theorem \ref{thm:int_har_ineq}, in particular proving an integral Hardy inequality assuming the fatness condition on the complement of the domain $\Omega$.
\\[0.2cm]
\begin{proof}[Proof of Theorem \ref{thm:int_har_ineq}]
Let us assume that $\diam(\Omega) \leq \frac{1}{2}$.
In that case, our goal is to prove that there exists a constant $c = c(n,p,q,r_o,c_{\mathrm{fat}})$ such that
$$
	\int_\Omega \varphi\left(x, \dist(x,\partial\Omega)^{-1}|u(x)| \right) \,\dx
	\leq
	c \left(1+[a]_{0;\alpha}\|\nabla u\|_{L^p(\Omega)}^{q-p}\right)^\frac{p}{p-1} \int_{\Omega} \varphi \left(x, \left|\nabla u(x)\right| \right) \,\dx
$$
holds for any $u \in \Lip_0(\Omega)$.\\
Provided that this is true, we can then show the claim for general domains by transformation.
Indeed, set $\lambda := \diam(\Omega)$ and $\widetilde{\Omega} := \Omega/(2\lambda)$, $\tilde{u}(y) = u(2\lambda y)/(2\lambda )$ and $\widetilde{\varphi}(y,t)=\varphi(2\lambda y,t)$.
Note that $\diam(\widetilde{\Omega}) \leq 1/2$ and that $\widetilde{\varphi}(y,t) = t^p + \tilde{a}(y)t^q$, where $\tilde{a}(y) := a(2\lambda y)$ is Hölder continuous to the exponent $\alpha$ with $[\tilde{a}]_{0;\alpha} = (2\lambda )^\alpha [a]_{0;\alpha}$.
Taking further into account that $\dist(2\lambda y,\partial\Omega) = 2\lambda  \dist(y,\partial\widetilde{\Omega})$ for any $y \in \widetilde{\Omega}$ and that $\nabla \tilde{u}(y) = (\nabla u)(2\lambda y)$, the preceding inequality yields
\begin{align*}
    &\int_\Omega \varphi\left(x, \frac{|u(x)|}{\dist(x,\partial\Omega)} \right) \,\dx
    =
    \int_{\widetilde{\Omega}} \varphi\left(2\lambda y, \frac{|u(2\lambda y)|}{\dist(2\lambda y,\partial\Omega)} \right) (2\lambda )^n \,\dy
    \\&=
    (2\lambda )^n \int_{\widetilde{\Omega}} \widetilde{\varphi} \left( y, \frac{|\tilde{u}(y)|}{\dist(y,\partial\widetilde{\Omega})} \right) \,\dy
    \\&\leq
    c (2\lambda )^n \left(1+[\tilde{a}]_{0;\alpha}\|\nabla \tilde{u}\|_{L^p(\widetilde{\Omega})}^{q-p}\right)^\frac{p}{p-1} \int_{\widetilde{\Omega}} \widetilde{\varphi} \left(y, \left|\nabla \tilde{u}(y)\right| \right) \,\dy
    \\&\leq
    c \left( 1 + \lambda ^{\alpha + \frac{n(p-q)}{p}} [a]_{0;\alpha} \left( \int_{\widetilde{\Omega}} |(\nabla u)(2 \lambda y)|^p (2\lambda )^n \,\dy \right)^\frac{q-p}{p} \right)^\frac{p}{p-1}
    \\&\phantom{=}\cdot
    \int_{\widetilde{\Omega}} \varphi \left( 2\lambda y, |(\nabla u)(2\lambda y)| \right) (2\lambda )^n \,\dy
    \\&=
    c \left( 1 + \lambda ^{\alpha + \frac{n(p-q)}{p}} [a]_{0;\alpha} \|\nabla u\|_{L^p(\Omega)}^{q-p} \right)^\frac{p}{p-1} \int_\Omega \varphi(x,|\nabla u|) \,\dx,
\end{align*}
where $c = c(n,p,q,r_o,c_{\mathrm{fat}})$. \\
In order to deduce the penultimate inequality for domains with $\diam(\Omega) \leq \frac{1}{2}$, we argue in the following way.
Let $\mathcal{W}$ be a Whitney cube decomposition of $\Omega$ in the sense of Lemma \ref{lem:Whitney}.
For each cube $Q \in \mathcal{W}$, let $x_Q \in \partial\Omega$ be a point with $\dist(Q,\partial\Omega) = \dist(Q,x_Q)$ and note that the ball $B_Q := B(x_Q,r_Q)$ with $r_Q := 5 \diam(Q)$ contains $Q$.
Moreover, there exists a constant $C=C(n)$ such that the cube $CQ$, i.e.~the cube with the same center as $Q$ and $\diam(CQ) = C \diam(Q)$, contains $B_Q$.
Next, for $k \in \N$ we set
$$
	\Omega_k
	:=
	\bigcup_{Q \in \mathcal{W}_k} Q,
	\quad \text{where }
	\mathcal{W}_k
	:=
	\big\{Q \in \mathcal{W} : \diam(Q) = 2^{-k} \big\},
$$
and
$$
	\widetilde{\Omega}_k := \bigcup_{m=k}^{+\infty} \Omega_m.
$$

For $k_o := \lceil 2 + \log(C) \rceil$ and $Q \subset \Omega_k$, $k \geq k_o$, we have that $C Q \cap \Omega \subset \widetilde{\Omega}_{k-k_o}$.
Moreover, there exists $k_1 = k_1(n,r_o) > k_o$ such that $r_Q < r_o$ for any $\W \ni Q \subset \widetilde{\Omega}_{k_1}$.
Let $u \in \Lip_0(\Omega)$, $0<\beta<1$ and abbreviate $d(x) := \dist(x,\partial\Omega)$.
Applying Lemma \ref{lem:protoMazya-type-ineq} to $u/r_Q^{1+\beta}$ with the parameter choices $(r,R,z,y,m)\equiv(r_Q,r_Q,x_Q,x_Q,p)$ and using the fatness condition \eqref{eq:inf-capacity-density} in conjunction with the upper bound from Lemma \ref{lem:upper-estimate-infcap}, for any $\W \ni Q \subset \widetilde{\Omega}_{k_1}$ we obtain that
\begin{align*}
    \int_Q &\varphi \left(x,\frac{|u(x)|}{d^{1+\beta}(x)} \right) \,\dx
    \leq
    \int_Q \varphi^+_Q \left( \frac{|u(x)|}{d^{1+\beta}(x)} \right) \,\dx \\
    &\leq
    c(q) \int_{B_Q} \varphi^+_{x_Q,r_Q} \left( r_Q^{-(1+\beta)}|u(x)| \right) \,\dx
    \nonumber \\&\leq
    c \, \frac{ 1 + a^+_{x_Q,r_Q} \cp_p \left( \left\{ x \in \overline{B\left(0,\frac{1}{2}\right)} : u(x_Q+r_Qx)=0 \right\}; B(0,1) \right)^{1-\frac{q}{p}} \left( r_Q^{-\beta} \nabla u \right)_{B_Q,p}^{q-p}}{\cp_p \left( \left\{ x \in \overline{B\left(0,\frac{1}{2}\right)} : u(x_Q+r_Qx)=0 \right\}; B(0,1) \right)}
    \nonumber\\ & \qquad \cdot 
    \int_{B_Q} \left|r_Q^{-\beta} \nabla u(x)\right|^p \,\dx
    \nonumber\\ &\leq
    c \,\left[ 1 + a^+_{x_Q,r_Q} \left( r_Q^{-\beta} \nabla u \right)_{B_Q,p}^{q-p} \right]
    \int_{B_Q} \left|r_Q^{-\beta} \nabla u(x)\right|^p \,\dx
\end{align*}
with a constant $c=c(n,p,q,c_\mathrm{fat})$.
By means of Hölder's inequality, Hölder continuity of $a$, \eqref{eq:gap-exponents} and the facts that $5 \geq r_Q \geq d(x)$ for any $x \in B_Q$, $B_Q \subset CQ$ and $u \in \Lip_0(\Omega)$, we estimate the right-hand side of the preceding inequality by
\begin{align*}
    a^+_{x_Q,r_Q} & \left( r_Q^{-\beta} \nabla u \right)_{B_Q,p}^{q-p} 
    \int_{B_Q} \left|r_Q^{-\beta} \nabla u(x)\right|^p \,\dx
    \nonumber\\ & \leq
    \left( a^-_{x_Q,r_Q} + [a]_{0,\alpha} (2r_Q)^\alpha \right)\left( r_Q^{-\beta} \nabla u \right)_{B_Q,p}^{q-p}
    \int_{B_Q} \left|r_Q^{-\beta} \nabla u(x)\right|^p \,\dx
    \nonumber\\ & \leq
    \int_{B_Q} a^-_{x_Q,r_Q} \left|r_Q^{-\beta} \nabla u(x)\right|^q \,\dx 
    + 
    c [a]_{0,\alpha} r_Q^{\alpha-\frac{n(q-p)}{p}} \left\| r_Q^{-\beta} \nabla u(x) \right\|_{L^p(B_Q)}^{q-p} 
    \int_{B_Q} \left|r_Q^{-\beta} \nabla u(x)\right|^p \,\dx
    \nonumber\\ & \leq
    c \left( 1 + [a]_{0;\alpha} \left\| d^{-\beta}(x) \nabla u(x) \right\|_{L^p(\Omega)}^{q-p} \right)
    \int_{CQ \cap \Omega} \varphi \left(x, \left|r_Q^{-\beta} \nabla u(x)\right| \right) \,\dx,
\end{align*}
with a constant $c=c(n,p,q)$. This leads us to
\begin{align}\label{eq:Hardy-aux1}
    \int_Q &\varphi \left(x,\frac{|u(x)|}{d^{1+\beta}(x)} \right) \,\dx
    \leq
    c \left( 1 + [a]_{0;\alpha} \left\| d^{-\beta}(x) \nabla u(x) \right\|_{L^p(\Omega)}^{q-p} \right)
    \int_{CQ \cap \Omega} \varphi \left(x, \left|r_Q^{-\beta} \nabla u(x)\right| \right) \,\dx
\end{align}
for any cube $\W \ni Q \subset \widetilde{\Omega}_{k_1}$ with $c=c(n,p,q,c_\mathrm{fat})$.
Summing the inequalities \eqref{eq:Hardy-aux1} related to $Q \in \W_k$ for some $k \geq k_1$, 
using that the cubes $CQ$ with $Q \in \W_k$ have uniformly bounded overlap depending only on the space dimension $n$ (see Remark~\ref{rem:bounded-overlap}), that $CQ \cap \Omega \subset \widetilde{\Omega}_{k-k_o}$ for any $Q \in \W_k$,
and finally that $r_Q = 5\sqrt{n} 2^{-k}$ for any $Q \in \W_k$, we infer
\begin{align*}
	\int_{\Omega_k} &\varphi \left(x, \frac{|u(x)|}{d^{1+\beta}(x)} \right) \,\dx\\
	&\leq
	c \left( 1 + [a]_{0;\alpha} \left\| d^{-\beta}(x) \nabla u(x) \right\|_{L^p(\Omega)}^{q-p} \right)
	\sum_{Q \in \W_k}
	\int_{CQ \cap \Omega} \varphi \left(x, \left|r_Q^{-\beta} \nabla u(x)\right| \right) \,\dx \\
	&\leq
	c \left( 1 + [a]_{0;\alpha} \left\| d^{-\beta}(x) \nabla u(x) \right\|_{L^p(\Omega)}^{q-p} \right)
	\sum_{m=k-k_o}^\infty
	\int_{\Omega_m} \varphi \left(x, \left|2^{k\beta} \nabla u(x)\right| \right) \,\dx.
\end{align*}
Now, summing over $k \geq k_1$, we conclude that
\begin{align}
	\int_{\widetilde{\Omega}_{k_1}} &\varphi \left(x, \frac{|u(x)|}{d^{1+\beta}(x)} \right) \,\dx
	\nonumber\\
	&\leq
	c \left( 1 + [a]_{0;\alpha} \left\| d^{-\beta}(x) \nabla u(x) \right\|_{L^p(\Omega)}^{q-p} \right)
	\sum_{k=k_1}^\infty \sum_{m=k-k_o}^\infty
	\int_{\Omega_m} \varphi \left(x, \left|2^{k\beta} \nabla u(x)\right| \right) \,\dx
	\nonumber\\
	&\leq
	c \left( 1 + [a]_{0;\alpha} \left\| d^{-\beta}(x) \nabla u(x) \right\|_{L^p(\Omega)}^{q-p} \right)
	\sum_{m=k_1-k_o}^\infty 
	\int_{\Omega_m} \sum_{k=k_1}^{m+k_o} \varphi \left(x, \left|2^{k\beta} \nabla u(x)\right| \right) \,\dx.
	\label{eq:Hardy-aux2}
\end{align}
At this stage, we compute
\begin{align}
	\sum_{k=k_1}^{m+k_o} \varphi \left(x, \left|2^{k\beta} \nabla u(x)\right| \right)
	&=
	\sum_{j=k_1-m}^{-1} \varphi \left(x, \left|2^{j\beta} 2^{m\beta} \nabla u(x)\right| \right)
	+ \sum_{j=0}^{k_o} \varphi \left(x, \left|2^{j\beta} 2^{m\beta} \nabla u(x)\right| \right) \nonumber\\
	&\leq
	\sum_{j=-\infty}^{0} 2^{jp\beta} \varphi \left(x, \left| 2^{m\beta} \nabla u(x)\right| \right)
	+ \sum_{j=0}^{k_o} 2^{jq\beta} \varphi \left(x, \left| 2^{m\beta} \nabla u(x)\right| \right)\nonumber\\
	&=
	\left( \frac{1}{1-2^{-p\beta}} + \frac{2^{(k_o+1)q\beta}-1}{2^{q\beta}-1} \right)
	\varphi \left(x, \left| 2^{m\beta} \nabla u(x)\right| \right) \nonumber\\
	&\leq
	\left( \frac{1}{1-2^{-p\beta}} + \frac{2^{k_o q}}{1-2^{-q\beta}} \right)
	\varphi \left(x, \left| 2^{m\beta} \nabla u(x)\right| \right)
    \label{eq:Hardy-aux2.5}
\end{align}
In the last line, we have used that $0<\beta<1$.
By the mean value theorem, for $s \in \{p,q\}$ there exists $0\leq\tilde{\beta}\leq \beta <1$ such that
$$
	1-2^{-s\beta}
	=
	2^{-s 0}-2^{-s\beta}
	=
	-\beta \left(\frac{\d}{\d\beta} 2^{-s\beta}\right)(\tilde{\beta})
	=
	\beta \ln(2) s 2^{-s\tilde{\beta}}
	\ge
	\beta \ln(2) s 2^{-s},
$$
which is equivalent to
$$
	\frac{1}{1-2^{-s\beta}}
	\leq
	\frac{2^s}{\ln(2)s \beta}.
$$
Inserting this estimate into the right-hand side of \eqref{eq:Hardy-aux2.5} and recalling that $k_o$ depends only on $n$, we find that
$$
	\sum_{k=k_1}^{m+k_o} \varphi \left(x, \left|2^{k\beta} \nabla u(x)\right| \right)
	\leq
	c(n,p,q) \frac{1}{\beta} \varphi \left(x, \left| 2^{m\beta} \nabla u(x)\right| \right).
$$
Plugging the preceding inequality in turn into \eqref{eq:Hardy-aux2} and using that
$$
	2^{-m} \geq \frac{1}{2}\diam(Q) \geq \frac{1}{4} \diam(Q) + \frac{1}{16} \dist(Q,\partial\Omega)
	\geq
	\frac{1}{16} d(x)
$$
for any $x \in Q \subset \Omega_m$, we conclude that
\begin{align}
	\int_{\widetilde{\Omega}_{k_1}} &\varphi \left(x, \frac{|u(x)|}{d^{1+\beta}(x)} \right) \,\dx
	\nonumber \\
	&\leq
	c \left( 1 + [a]_{0;\alpha} \left\| d^{-\beta}(x) \nabla u(x) \right\|_{L^p(\Omega)}^{q-p} \right) \beta^{-1}
	\sum_{m=k_1-k_o}^\infty 
	\int_{\Omega_m} \varphi \left(x, \left| 2^{m\beta} \nabla u(x)\right| \right) \,\dx
	\nonumber\\
	&\leq
	c \left( 1 + [a]_{0;\alpha} \left\| d^{-\beta}(x) \nabla u(x) \right\|_{L^p(\Omega)}^{q-p} \right) \beta^{-1}
	\sum_{m=k_1-k_o}^\infty 
	\int_{\Omega_m} \varphi \left(x, \left| d^{-\beta}(x) \nabla u(x)\right| \right) \,\dx
	\nonumber\\
	&\leq
	c \left( 1 + [a]_{0;\alpha} \left\| d^{-\beta}(x) \nabla u(x) \right\|_{L^p(\Omega)}^{q-p} \right) \beta^{-1} 
	\int_{\Omega} \varphi \left(x, \left| \frac{\nabla u(x)}{d^\beta(x)}\right| \right) \,\dx.
	\label{eq:Hardy-aux3}
\end{align}
Moreover, since $d(x) \geq 2^{-k_1}$ in the complement of $\widetilde{\Omega}_{k_1}$, $k_1$ depends only on $n$ and $r_o$, and we have that $\beta <1$, by applying the Poincaré inequality for functions with zero boundary values of Lemma \ref{lem:poincare-zero-boundary-values} (since $\spt(u) \Subset \Omega$ and $\diam(\Omega) \leq \frac{1}{2}$, we may replace $\Omega$ with by a suitable ball $B(x_o,1)$ as the domain of integration) we obtain that
\begin{align}
	\int_{\Omega \setminus \widetilde{\Omega}_{k_1}} \varphi \left(x, \frac{|u(x)|}{d^{1+\beta}(x)} \right) \,\dx
	&\leq
	\int_{\Omega \setminus \widetilde{\Omega}_{k_1}} \varphi \left(x, 2^{2k_1}|u(x)| \right) \,\dx
	\nonumber \\
	&\leq
	c \left( 1 + [a]_{0;\alpha} \left\| \nabla u(x) \right\|_{L^p(\Omega)}^{q-p} \right) 
	\int_{\Omega} \varphi \left(x, \left| \nabla u(x)\right| \right) \,\dx
	\label{eq:Hardy-aux4}
\end{align}
with $c=c(n,p,q,r_o)$.
Adding \eqref{eq:Hardy-aux3} and \eqref{eq:Hardy-aux4} and using that $d(x) \leq 1$ and $\beta < 1$, we have that
\begin{align*}
	\int_{\Omega} &\varphi \left(x, \frac{|u(x)|}{d^{1+\beta}(x)} \right) \,\dx
	\\
	&\leq
	c \left( 1 + [a]_{0;\alpha} \left\| d^{-\beta}(x) \nabla u(x) \right\|_{L^p(\Omega)}^{q-p} \right) \beta^{-1} 
	\int_{\Omega} \varphi \left(x, \left| \frac{\nabla u(x)}{d^\beta(x)}\right| \right) \,\dx
\end{align*}
with a constant $c=c(n,p,q,r_o,c_\mathrm{fat})$.
Next, replacing $u$ in the previous inequality by $v := d^\beta u \in \Lip_0(\Omega)$ and using that
$$
	|\nabla v|
	\leq
	\beta d^{\beta-1} u + d^\beta |\nabla u|
$$
leads us to
\begin{align}
	\int_{\Omega} \varphi \left(x, \frac{|u(x)|}{d(x)} \right) \,\dx
	&=
	\int_{\Omega} \varphi \left(x, \frac{|v(x)|}{d^{1+\beta}(x)} \right) \,\dx
	\nonumber\\
	&\leq
	c \left( 1 + [a]_{0;\alpha} \left\| d^{-\beta}(x) \nabla v(x) \right\|_{L^p(\Omega)}^{q-p} \right) \beta^{-1} 
	\int_{\Omega} \varphi \left(x, \left| \frac{\nabla v(x)}{d^\beta(x)}\right| \right) \,\dx
	\nonumber\\
	&\leq
	c \left( 1 + [a]_{0;\alpha} \left\| \beta d^{-1}(x) u(x) \right\|_{L^p(\Omega)}^{q-p}  + [a]_{0;\alpha} \|\nabla u\|_{L^p(\Omega)}^{q-p}\right) \beta^{-1}
	\nonumber\\
	&\phantom{=}
	\cdot  
	\left(	
	\int_{\Omega} \varphi \left(x, \left| \beta \frac{u(x)}{d(x)}\right| \right) \,\dx
	+ \int_{\Omega} \varphi \left(x, \left|\nabla u(x)\right| \right) \,\dx \right)
	\nonumber\\
	&\leq
	c \left( 1 + [a]_{0;\alpha} \|\nabla u\|_{L^p(\Omega)}^{q-p}\right)
	\label{eq:Hardy-aux6}\\
	&\phantom{=}
	\cdot
	\left(	
	\beta^{p-1}\int_{\Omega} \varphi \left(x, \left| \frac{u(x)}{d(x)}\right| \right) \,\dx
	+ \beta^{-1} \int_{\Omega} \varphi \left(x, \left|\nabla u(x)\right| \right) \,\dx \right).
	\nonumber
\end{align}
In the penultimate line we have used that, by Theorem \ref{thm:equivalence-p-fatness} and thanks to the assumption that \eqref{eq:inf-capacity-density} holds, $\R^n \setminus \Omega$ is $p$-fat and thus the $p$-Hardy inequality in Lemma \ref{lem:p-Hardy} holds.
At this stage, we derive a Hardy inequality by reabsorbing the first integral on the right-hand side of \eqref{eq:Hardy-aux6} into the left-hand side.
Indeed, we choose
$$
	\beta := \left[ 2 c \left(1+[a]_{0;\alpha}\|\nabla u\|_{L^p(\Omega)}^{q-p}\right) \right]^{-\frac{1}{p-1}} < 1,
$$
where we assumed without loss of generality that $c > \frac{1}{2(1+[a]_{0;\alpha}\|\nabla u\|_{L^p(\Omega)}^{q-p})}$. Indeed, in the opposite case we can estimate $c$ from above by the same number; then, choosing $\beta := 2^{-\frac{1}{p-1}}$ we have the desired statement.
\\
This choice of $\beta$ yields
$$
	\int_{\Omega} \varphi \left(x, \frac{|u(x)|}{d(x)} \right) \,\dx
	\leq
	\frac12\int_{\Omega} \varphi \left(x, \frac{|u(x)|}{d(x)} \right) \,\dx
	+ c \left( 1 + [a]_{0;\alpha} \|\nabla u\|_{L^p(\Omega)}^{q-p}\right) \beta^{-1} \int_{\Omega} \varphi \left(x, \left|\nabla u(x)\right| \right) \,\dx
$$
and hence
\begin{equation}
\label{eq:Hardy-aux7}
	\int_{\Omega} \varphi \left(x, \frac{|u(x)|}{d(x)} \right) \,\dx
	\leq
	c \left(1+[a]_{0;\alpha}\|\nabla u\|_{L^p(\Omega)}^{q-p}\right)^\frac{p}{p-1} \int_{\Omega} \varphi \left(x, \left|\nabla u(x)\right| \right) \,\dx,
\end{equation}
holds with a constant $c=c(n,p,q,r_o,c_\mathrm{fat})$ for any $u \in \Lip_0(\Omega)$.
This concludes the proof of the theorem.
\end{proof}

%-----------------------------------------------------------------

\section{Equivalence of fatness, the boundary Poincaré inequality, and pointwise Hardy inequalities}
\label{proof_thm_eq}

In this section we finally show Theorem \ref{thm-equivalence}, in particular proving that the infimal $\varphi$-fatness condition \eqref{eq:inf-capacity-density}, the boundary Poincaré inequality and the pointwise Hardy inequality are all equivalent. Moreover, in Remark \ref{rmk:phases_equiv} we give more details on what happens considering being in the two different phases of the functional.
\\[0.2cm]
\begin{proof}[Proof of Theorem \ref{thm-equivalence}]
\\
As a first thing, note that the equivalence of \eqref{thm:equivalence-0} and \eqref{thm:equivalence-1} has already been proven in Theorem \ref{thm:equivalence-p-fatness}.
\\
Now, we show the implication from \eqref{thm:equivalence-1} to \eqref{thm:equivalence-2}.
To this end, fix $z\in \R^n \setminus \Omega$, $y\in \R^n$, $0<r < r_o$, $R>0$, and $u\in \Lip_0(\Omega)$.
For the extension of $u$ by zero, we have that $u \in \Lip(B(z,r))$.
For simplicity, denote
$$
    Z:=
    \left\{x \in \overline{B\left(0,\tfrac{1}{2}\right)} : u(z+rx)=0\right\}
$$
By Lemma \ref{lem:upper-estimate-infcap} and \eqref{thm:equivalence-1} we have that
\begin{align*}
	\cp_p(Z; B(0,1))
    \geq
    \cp_{\widetilde{\varphi}}^{\inf}(Z; B(0,1))
    \geq
	\cp_{\widetilde{\varphi}}^{\inf}\left(\left\{x \in \overline{B\left(0,\tfrac{1}{2}\right)} : z+rx \in \R^n \setminus \Omega \right\}; B(0,1)\right)
	\geq
	c_\mathrm{fat}.
\end{align*}
Thus, applying Lemma \ref{lem:protoMazya-type-ineq} with the particular choice $m=p$ and estimating the factor on the right-hand side by the preceding inequality, we obtain that
\begin{align*}
    \int_{B(z,r)} \varphi^\pm_{y,R}(|u|) \,\dx
    & \leq
    \frac{c \left[ 1 + a^\pm_{y,R} \cp_p(Z;B(0,1))^{\frac{p-q}{p}} (r\nabla u)_{B(z,r),p}^{q-p} \right]}{\cp_p(Z;B(0,1))} 
    \int_{B(z,r)} |r \nabla u|^p \,\dx \\
    &\leq
    \frac{c \left[ 1 + a^\pm_{y,R} c_\mathrm{fat}^{\frac{p-q}{p}} (r\nabla u)_{B(z,r),p}^{q-p} \right]}{c_\mathrm{fat}} 
    \int_{B(z,r)} |r \nabla u|^p \,\dx.
\end{align*}
Dividing by $|B(z,r)|$ yields \eqref{thm:equivalence-2}.
        
We proceed by establishing that \eqref{thm:equivalence-2} implies \eqref{thm:equivalence-3}.
To this end, let $z\in \Omega$ with $r_z:=\dist(z,\partial\Omega)<r_o$, $y\in \R^n$, $R>0$ and $u\in \Lip_0(\Omega)$.
Without loss of generality, we assume that $u \geq 0$ (otherwise one can replace $u$ by $|u|$).
We choose $w\in \partial \Omega$ such that $|w-z|=r_z$.
By the fact that $B(w,r_z) \subset B(z,2r_z)$, Jensen's inequality, \eqref{thm:equivalence-2}, \eqref{eq:q-less-Sobolev-exponent} and the classical $(p,p)$- and $(q,p)$-Poincaré inequalities (see for instance \cite[Theorem 3.14]{KLV21}), we obtain that
\begin{align*}
    \varphi^\pm_{y,R} &\big((u)_{B(z,r_z)} \big) \\
    &\leq
    c(q) \big[ \varphi^\pm_{y,R}\big((u)_{B(z,r_z)} - (u)_{B(z,2r_z)} \big) 
    +
    \varphi^\pm_{y,R}\big((u)_{B(z,2r_z)} - (u)_{B(w,r_z)} \big)
    +
    \varphi^\pm_{y,R}\big((u)_{B(w,r_z)} \big) \big]
    \\ & \leq
    c(n,q) \bigg[ \bint_{B(z,2r_z)} |u(x) - (u)_{B(z,2r_z)}|^p \, \dx
    +  a^\pm_{y,R} \bint_{B(z,2r_z)} |u(x) - (u)_{B(z,2r_z)}|^q \, \dx
    \\&\phantom{=}\qquad
    + \bint_{B(w,r_z)} \varphi^\pm_{y,R} (u) \, \dx \bigg]
    \\ & \leq
    c(n,p,q,c_\mathrm{fat}) \left[ 1 + a^\pm_{y,R} (r_z\nabla u)_{B(z,2r_z),p}^{q-p} + a^\pm_{y,R} (r_z\nabla u)_{B(w,r_z),p}^{q-p}\right]
    \bint_{B(z,2r_z)} |r_z\nabla u|^p \, \dx
    \\&\leq
    c(n,p,q,c_\mathrm{fat}) \left[ 1 + a^\pm_{y,R} (r_z\nabla u)_{B(z,2r_z),p}^{q-p}\right]
    \bint_{B(z,2r_z)} |r_z\nabla u|^p \, \dx,
\end{align*}
i.e.~the claim of \eqref{thm:equivalence-3}.

Next, we show that \eqref{thm:equivalence-3} yields \eqref{thm:equivalence-4}.
Here, we use the same notation as in the previous step.
Using Lemma~\ref{lem:aux-equivalence} and \eqref{thm:equivalence-3}, we conclude that
\begin{align*}
    \varphi^\pm_{y,R}(u(z)) 
    & \leq 
    c(q) \big[ \varphi^\pm_{y,R}\big(u(z)- (u)_{B(z,r_z)} \big) + \varphi^\pm_{y,R}\big((u)_{B(z,r_z)} \big) \big]
    \\& = 
    c(q) \big[ |u(z)- (u)_{B(z,r_z)}|^p + a^\pm_{y,R} |u(z)- (u)_{B(z,r_z)}|^q + \varphi^\pm_{y,R}\big((u)_{B(z,r_z)} \big) \big]
    \\ & \leq 
    c(n,p,q,c_\mathrm{fat}) \bigg[ M_{2r_z}(|r_z\nabla u|^p )(z) + a^\pm_{y,R} M_{2r_z}( |r_z\nabla u|^p )^{\frac{q}{p}}(z)
    \\ & \qquad\qquad\quad +
    \left( 1 + a^\pm_{y,R} (r_z\nabla u)_{B(z,2r_z),p}^{q-p} \right) \bint_{B(z,2r_z)} |r_z\nabla u|^p \, \dx \bigg]
    \\ & \leq 
    c(n,p,q,c_\mathrm{fat}) \left[ 1 + a^\pm_{y,R} M_{2r_z}(|r_z\nabla u|^p )^{\frac{q-p}{p}}(z) \right] M_{2r_z}(|r_z\nabla u|^p )(z),
\end{align*}
i.e.~we have the claim of \eqref{thm:equivalence-4}.

Finally, we prove that \eqref{thm:equivalence-4} implies \eqref{thm:equivalence-1}.
At this stage, we assume that the constant on the right-hand side of \eqref{thm:equivalence-4} only depends on $n$, $p$, and $q$, but not on any additional number $c_\mathrm{fat}>0$.
We will then determine the constant $c_\mathrm{fat}=c_\mathrm{fat}(n,p,q)$ in \eqref{thm:equivalence-1}.
Observe that a change of variables as in \eqref{eq:equivalence-density-aux3} shows that the infimal $\varphi$-capacity density condition \eqref{eq:inf-capacity-density} is fulfilled
if there exists $c_\mathrm{fat}>0$ such that the inequality
\begin{equation}
    c_\mathrm{fat} \varphi^-_{z,2r}(t)
    \leq
    \bint_{B(z,2r)} \varphi(y,2r\nabla u) \,\dy
    \label{eq:aux-equivalence}
\end{equation}
holds for every point $z \in \R^n \setminus \Omega$, level $t>0$, radius $0<2r<r_o$ and $u\in \mathcal{M}_{z,2r}$, where

$$
    \mathcal{M}_{z,2r} :=
    \left\{
    u \in \Lip_0(\overline{B(z,2r)}) :
    u\geq t \chi_{(\R^n \setminus \Omega) \cap \overline{B(z,r)}}
    \right\}.
$$

Further, by Lemma \ref{lem:truncation}, we may assume that $0\leq u\leq t$.
At this stage, let us first consider the case where
\begin{equation}
    \bint_{B(z,r)} u \, \dy
    >
    \frac{t}{4^{n+1}}.
    \label{inequ-312}
\end{equation}
Note that for any $s\in [1,\infty)$, the classical Sobolev-Poincaré inequality (see for instance \cite[Theorem 3.17]{KLV21}) implies that
\begin{equation*}
    t <
    4^{n+1} \bint_{B(z,r)} u \, \dy
    \leq
    c(n) \bint_{B(z,2r)} u \, \dy
    \leq
    c(n,s) \bigg(  \bint_{B(z,2r)} |2r\nabla u|^s \, \dy  \bigg)^{\frac{1}{s}}.
\end{equation*}
Applying the preceding inequality with the parameter choices $s=p,q$, we obtain that
\begin{align*}
    \varphi^-_{z,2r}(t)
    &=
    \big( t^p + a_{z,2r}^- t^q \big)
    \leq
    c(n,p,q) \bint_{B(z,2r)} \varphi^-_{z,2r}(2r\nabla u) \, \dy
    \leq
    c(n,p,q) \bint_{B(z,2r)} \varphi(y,2r\nabla u) \, \dy,
\end{align*}
i.e.~that \eqref{eq:aux-equivalence} holds.
Thus, \eqref{thm:equivalence-1} immediately follows if \eqref{inequ-312} holds for every $u\in \mathcal{M}_{z,2r}$.

Let us now consider the case where \eqref{inequ-312} is not true for a function $u \in \mathcal{M}_{z,2r}$.
First, note that since $u$ is continuous, the sublevel set
$$
    F:=
    \left\{y\in B\left( z, \tfrac{r}{4}  \right) : 0\leq u< \tfrac{t}{2} \right\}
$$
is open.
Furthermore, we have that $F\subset\Omega$, since $u\geq t \chi_{(\R^n \setminus \Omega) \cap \overline{B(z,r)}}$, and $u\geq t/2$ on $B\left( z, \tfrac{r}{4}  \right)\setminus F$.
By these observations and the fact that \eqref{inequ-312} is false for $u$, for the measure of this set we obtain that
\begin{equation*}
    \left|B\left( z, \tfrac{r}{4}  \right)\setminus F\right|
    \leq
    \frac{2}{t} \int_{B\left( z, \frac{r}{4}  \right)\setminus F} u \,\dy
    \leq
    \frac{1}{2 \cdot 4^n} |B(z,r)|,
\end{equation*}
which in turn implies that
\begin{equation*}
    |F|
    =
    \left| B\left( z, \tfrac{r}{4}  \right) \right| - \left| B\left( z, \tfrac{r}{4}  \right) \setminus F \right|
    \geq
    \frac{1}{4^n} |B(z,r)| - \frac{1}{2 \cdot 4^n} |B(z,r)|
    =
    \frac{1}{2 \cdot 4^n} |B(z,r)|.
\end{equation*}
To proceed, we define the cut-off function $\psi \colon \R^n \to \R_{\geq 0}$ by
\begin{equation*}
    y\mapsto \max\big\{ 0, 1 - \tfrac{4}{r} \dist\big(y,B\big(z,\tfrac{3r}{4} \big)\big) \big\}
\end{equation*}
and set
$$
    v := \psi(t-u)
    \in \Lip(\overline{B(z,r)}).
$$
Note that $\spt(v) \Subset \overline{\Omega \cap B(z,r)}$, since $u\equiv t$ on $\overline{B(z,r)}\cap (\R^n \setminus \Omega)$.
Further, by construction of $\psi$, we have that $v=t-u$ in $B\big( z,\frac{3r}{4} \big)$, and hence that $|\nabla u|=|\nabla v|$ almost everywhere in $B\big( z,\frac{3r}{4} \big)$. 
By definition of the restricted centered maximal operator, we have that for every $x\in F$ there exists $0<r_x\leq 2 \dist(x,\partial\Omega)=: 2 d_x$ such that
\begin{equation}
\label{eq:aux_maxop}
    M_{2d_x}\big( |d_x\nabla u|^p \big)(x)
    =
    \sup_{0<r<2d_x} \bint_{B(x,r)} |d_x\nabla u|^p \, \dy
    \leq 
    2\bint_{B(x,r_x)} |d_x\nabla u|^p \, \dy.
\end{equation}
Next, since $z \in \R^n \setminus \Omega$ and $x \in F \subset \Omega \cap B\big(z, \frac{r}{4} \big)$, we have that $d_x \leq \frac{r}{4}$ and thus $r_x \leq \frac{r}{2}$.
Therefore, for any $y \in B(x,r_x)$ we find that
\begin{equation*}
    |z-y| \leq |z-x| + |x-y|
    \leq
    \frac{r}{4} + r_x
    \leq
    \frac{3r}{4}.
\end{equation*}
In particular, this leads to $B( x, r_x) \subset B\big( z, \frac{3r}{4} \big)$.
On the one hand, this yields $|\nabla u| = |\nabla v|$ almost everywhere on $B(x,r_x)$.
On the other hand, the universal bound for the radii $r_x$ allows us to apply the Vitali-type $5r$-covering lemma (see for instance \cite[Lemma 1.13]{KLV21}).
Therefore, there exists a collection of disjoint balls $B(x_i,r_i)$, $i \in \N$, with $x_i \in F$ and $r_i := r_{x_i}$, such that
\begin{equation*}
    F \subset \bigcup_{i=1}^{+\infty} B(x_i,5 r_i).
\end{equation*}       
Due to the lower bound on the measure of $F$, we deduce that
\begin{align*}
    |B(z,r)|
    \leq
    2 \cdot 4^n |F|
    \leq
    2 \cdot 20^n \sum_{i=1}^{+\infty} | B(x_i,r_i)|.
\end{align*}
At this stage, we apply the pointwise Hardy inequality \eqref{thm:equivalence-4}.
Since $x_i\in F$, observe that $v(x_i)=t-u(x_i)\geq t/2$ for any $i \in \N$ and that $r_i \leq 2d_{x_i} < 2r <r_o$.
Thus, applying \eqref{thm:equivalence-4} to $v(x_i)$, using \eqref{eq:aux_maxop}, Jensen's inequality and the fact that $B(x_i,r_i) \subset B(z,2r)$, and thus $a_{z,2r}^- \leq a_{x_i,r_i}^-$, for any $i \in \N$ we infer
\begin{align*}
    \varphi^-_{z,2r}( t )
    &\leq 
    2^q \varphi^-_{z,2r}\big( \tfrac{t}{2} \big)
    \leq 
    2^q \varphi^-_{z,2r}\big( v(x_i) \big) 
    \\ & \leq 
    c \left[ 1 + a^-_{z,2r} M_{2d_{x_i}}(|d_{x_i} \nabla u|^p )^{\frac{q-p}{p}}(x_i) \right] M_{2d_{x_i}}(|d_{x_i}\nabla u|^p )(x_i)
    \\ & \leq 
    c \left[ 1 + a^-_{z,2r} \bigg( \bint_{B(x_i,r_i)} |d_{x_i}\nabla u|^p \, \dy \bigg)^{\frac{q-p}{p}} \right]
    \bint_{B(x_i,r_i)} |d_{x_i}\nabla u|^p \, \dy
    \\ & \leq 
    c \, \bint_{B(x_i,r_i)} \left[|r\nabla u|^p + a^-_{x_i,r_i} |r\nabla u|^q \right] \, \dy
    \\ & \leq
    c \, \bint_{B(x_i,r_i)} \varphi(y,r\nabla u)\, \dy
\end{align*}
with a constant $c = c(n,p,q)$. From the previous chain of inequalities, we can immediately estimate the measures of the balls $B(x_i,r_i)$, $i \in \N$, by
\begin{align*}
    |B(x_i,r_i)| \varphi^-_{z,2r}( t )
    &\leq 
    c \int_{B(x_i,r_i)} \varphi(y,r\nabla u)\, \dy.
\end{align*}
Finally, using that the balls $B(x_i,r_i)$, $i \in \N$, are disjoint, the preceding inequality leads us to
\begin{align*}
    |B(z,r)| \varphi^-_{z,2r}( t )
    & \leq 
    c(n) \sum_{i=1}^{+\infty} | B(x_i,r_i)| \varphi^-_{z,2r}( t )
    \\ & \leq 
    c(n,p,q) \sum_{i=1}^{+\infty} \int_{B(x_i,r_i)} \varphi(y,r\nabla u)\, \dy
    \\ & \leq 
    c(n,p,q) \int_{B(z,2r)} \varphi(y,r\nabla u)\, \dy.
\end{align*}
Since this shows that \eqref{eq:aux-equivalence} holds true, we finally obtain that \eqref{thm:equivalence-1} is satisfied.
This concludes the proof of the theorem.
\end{proof}

\begin{remark}
\label{rmk:phases_equiv}
When considering the double-phase integrand \eqref{eq:integrand}, it is often necessary to distinguish between the $p$-phase, where $a$ is close to zero, and the $(p,q)$-phase, where $a$ is bounded away from zero.
More precisely, in a ball $B(z,r) \subset \R^n$ we consider the cases
\begin{equation}
	\left\{
	\begin{array}{ll}
		a_{z,r}^- \leq [a]_{0;\alpha} r^\alpha
		& \text{($p$-phase)}, \\[5pt]
		a_{z,r}^- > [a]_{0;\alpha} r^\alpha
		& \text{($(p,q)-$phase)}.
	\end{array}
	\right.
	\label{eq:phases}
\end{equation}
In the $p$-phase \eqref{eq:phases}$_1$, we have that
\begin{equation}
    a^+_{z,r}
    =
    a^-_{z,r} + |a^+_{z,r} - a^-_{z,r}|
    \leq
    [a]_{0;\alpha} r^\alpha + [a]_{0;\alpha} (2r)^\alpha
    \leq
    3[a]_{0;\alpha} r^\alpha,
    \label{eq:sup-osc-p-phase}
\end{equation}
whereas in the $(p,q)$-phase \eqref{eq:phases}$_2$, we find that
\begin{equation}
    a_{z,r}^+
    \leq
    a^-_{z,r} + [a]_{0;\alpha} (2r)^\alpha
    <
    3 a_{z,r}^-.
    \label{eq:sup-inf-pq-phase}
\end{equation}
    While we have stated Theorem~\ref{thm-equivalence} independent of the $p$- and $(p,q)$-phases defined in \eqref{eq:phases}, by distinguishing between these phases, we may derive the following inequalities from Theorem~\ref{thm-equivalence} \eqref{thm:equivalence-2}-\eqref{thm:equivalence-4}.
    Let us start with the boundary Poincaré inequality in \eqref{thm:equivalence-2} with parameters $(y,R) \equiv (z,r)$.
    On the one hand, in the $p$-phase \eqref{eq:phases}$_1$, by \eqref{eq:sup-osc-p-phase} and \eqref{eq:gap-exponents}, we find that
    \begin{align*}
        \bint_{B(z,r)} \varphi(x,|u|) \, \dx
        & \leq
        \bint_{B(z,r)} \varphi^+_{z,r}(|u|) \, \dx
        \leq
        c(n,p,q) \big( 1 + a^+_{z,r} (r\nabla u)_{B(z,r),p}^{q-p} \big)
        \bint_{B(z,r)} |r \nabla u|^p \,\dx
        \\& \leq
        c(n,p,q) \big( 1 + [a]_{0,\alpha} r^{\alpha-\frac{n(q-p)}{p}} \|r \nabla u\|_{L^p(B(z,r))}^{q-p} \big)
        \bint_{B(z,r)} |r \nabla u|^p \,\dx
        \\& \leq
        c(n,p,q) \big( 1 + [a]_{0,\alpha} \|r \nabla u\|_{L^p(B(z,r))}^{q-p} \big)
        \bint_{B(z,r)} |r \nabla u|^p \,\dx
    \end{align*}
    On the other hand, in the $(p,q)$-phase we use \eqref{eq:sup-inf-pq-phase} and Hölder's inequality to obtain that
    \begin{align*}
        \bint_{B(z,r)} & \varphi(x,|u|) \, \dx
        \leq
        \bint_{B(z,r)} \varphi^+_{z,r}(|u|) \, \dx
        \leq
        c(n,p,q) \big( 1 + a^+_{z,r} (r\nabla u)_{B(z,r),p}^{q-p} \big)
        \bint_{B(z,r)} |r \nabla u|^p \,\dx
        \\ & \leq
        c(n,p,q) \big( 1 + a^-_{z,r} (r\nabla u)_{B(z,r),p}^{q-p} \big)
        \bint_{B(z,r)} |r \nabla u|^p \,\dx
        \\& \leq
        c(n,p,q) \left[ \bint_{B(z,r)} |r \nabla u|^p \,\dx + a^-_{z,r} \bigg( \bint_{B(z,r)} |r \nabla u|^p \,\dx \bigg)^\frac{q}{p} \right]
        \\& \leq
        c(n,p,q) \bint_{B(z,r)} \left[|r \nabla u|^p + a^-_{z,r} |r \nabla u|^q \right] \,\dx
        \\& =
        c(n,p,q) \bint_{B(z,r)} \varphi^-_{z,r}(|r \nabla u|)\,\dx
        \leq
        c(n,p,q) \bint_{B(z,r)} \varphi(x,|r \nabla u|)\,\dx.
    \end{align*}
    Similarly, assuming that $r_z \leq 1$, in the $p$-phase we estimate the right-hand side of \eqref{thm:equivalence-3} by
    \begin{align*}
            \varphi\big(z,(u)_{B(z,r_z)} \big)
            & \leq
            \varphi^+_{z,r_z}\big((u)_{B(z,r_z)} \big)
             \leq
             c(n,p,q) \big( 1 +[a]_{0,\alpha}\|r_z\nabla u\|_{ L^p(B(z,2r_z))}^{q-p} \big)
            \bint_{B(z,2r_z)} |r_z \nabla u|^p \,\dx,
    \end{align*}
    whereas in the $(p,q)$-phase we obtain that
    \begin{align*}
            \varphi\big(z,(u)_{B(z,r_z)} \big)
            & \leq
            \varphi^+_{z,r_z}\big((u)_{B(z,r_z)} \big)
            \leq
            c(n,p,q) \bint_{B(z,2r_z)} \varphi^-_{z,r_z}(|r_z \nabla u|)\,\dx
            \\& \leq
            c(n,p,q) \bint_{B(z,2r_z)} \varphi(x,|r_z \nabla u|)\,\dx.
    \end{align*}
    Finally, choosing $(y,R) \equiv (z,2r_z)$, \eqref{thm:equivalence-4} is already the expected statement in the $p$-phase.
    However, in the $(p,q)$-phase, by Hölder's inequality and the monotonicity of $s\mapsto s^{p/q}$ we obtain that
    \begin{align*}
        \varphi( z, u(z))
        & \leq 
        \varphi^+_{z,2r_z}\big( u(z) \big)
        \leq
        c(n,p,q) \left[ 1 + a^+_{z,2r_z} M_{2r_z}(|r_z\nabla u|^p )^{\frac{q-p}{p}}(z) \right] M_{2r_z}(|r_z\nabla u|^p )(z)
        \\ & \leq
        c(n,p,q) \left[ 1 + a^-_{z,2r_z} M_{2r_z}(|r_z\nabla u|^p )^{\frac{q-p}{p}}(z) \right] M_{2r_z}(|r_z\nabla u|^p )(z)
        \\ & =
        c(n,p,q) \left[ M_{2r_z}(|r_z\nabla u|^p )(z) + a^-_{z,2r_z} \bigg( \sup\limits_{0<r\leq 2r_z} \bint_{B(z,r)} |r_z \nabla u|^p \,\dx \bigg)^{\frac{q}{p}} \right] 
        \\ & \leq
        c(n,p,q) \left[ M_{2r_z}(\varphi^-_{z,2r_z}(|r_z\nabla u|) )(z) + a^-_{z,2r_z} \bigg( \sup\limits_{0<r\leq 2r_z} \bigg( \bint_{B(z,r)} |r_z \nabla u|^q \,\dx \bigg)^{\frac{p}{q}} \bigg)^{\frac{q}{p}} \right] 
        \\ & \leq
        c(n,p,q) M_{2r_z}(\varphi^-_{z,2r_z}(|r_z\nabla u|) )(z)
        \leq
        c(n,p,q) M_{2r_z}(\varphi(\,\cdot\,,|r_z\nabla u|) )(z).
    \end{align*}
    In the $(p,q)$-phase, proceeding as in the proof of Theorem \ref{thm-equivalence} shows that the inequalities derived in this remark are even equivalent to the local infimal $\varphi$-fatness of $\R^n \setminus \Omega$.
\end{remark}

%--------------------------------------------------------------

\section{Global higher integrability for functionals of double-phase type}
\label{sec:higher_int}

The focus of this section is on proving the global higher integrability result of Theorem \ref{thm:higher_int} and that is achieved by the following covering argument.
\\
We consider sufficiently small balls that cover $\Omega$ and distinguish between two types of balls, namely the ones that are completely contained in $\Omega$, and those that possess a nontrivial intersection with its complement. For these types of balls, we establish a reverse Hölder's inequality.
Then, we combine the respective inequalities on the individual balls to obtain a global reverse Hölder's inequality. 
To treat the balls $B$ that are not contained in $\Omega$, we consider a ball $B'$ that covers $B$ with center at the boundary $\partial\Omega$ and distinguish the cases where the center is sufficiently close to the zero set of $a$ in $\partial\Omega$ or not. 
In the first case, we obtain a reverse Hölder's inequality with the help of the locally uniform $p$-fatness, while in the other case we are still able to obtain a reverse Hölder's inequality, but making use of the locally uniform $q$-fatness.
For balls that are contained in $\Omega$, a reverse Hölder's inequality was already obtained;
for our purposes we combine arguments in the proof of \cite[Lemma 2.14]{Ok2017} with the claim of \cite[Theorem 2.13]{Ok2017} as stated in Lemma \ref{lem:poincare-zero-boundary-values}.

\begin{lemma}
\label{lem:interior-reverse-holder}
Let $\varphi$ be defined according to \eqref{eq:integrand} such that \eqref{as:a} and \eqref{eq:gap-exponents} hold with $1<p<q<+\infty$. Let $\Omega \subset \R^n$ be a bounded open set, $z \in \Omega$ and $0<r \leq 1$ be a radius such that $B(z,r)\subset \Omega$.
Further, assume that the functional $\mathcal{F}$ given by \eqref{Functional} satisfies \eqref{Functional-growth}, and that $u\in W^{1,p}(\Omega)$ is a local $C_Q$-minimizer of $\mathcal{F}$ in the sense of Definition \ref{def:quasi-minimizer}.
Then there exist an exponent $\theta_0 = \theta_0(n,p,q)\in (0,1)$ and a positive constant $c = c(n,p,q, \nu, L, C_Q)$ such that
\begin{equation*}
         \bint_{B(z,\frac{r}{2})} \varphi(x,|\nabla u| ) \, \dx
        \leq 
        c\big( 1 + [a]_{0,\alpha} \| \nabla u \|_{L^p(\Omega)}^{q-p} \big)
        \bigg( \bint_{B(z,r)} \varphi^\theta(x,|\nabla u| ) \, \dx \bigg)^{\frac{1}{\theta}}
    \end{equation*}
holds for every $\theta\in [\theta_0,1]$.
\end{lemma}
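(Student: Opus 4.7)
The plan is to derive a Caccioppoli-type estimate from quasi-minimality, iterate it to absorb an interior term, and then feed the Sobolev-Poincaré inequality from Lemma \ref{lem:poincare-zero-boundary-values} into the right-hand side. The overall structure mirrors standard reverse Hölder proofs for the double-phase functional (compare \cite[Lemma 2.14]{Ok2017}), but the non-homogeneity of $\varphi$ and the appearance of the factor $(1+[a]_{0,\alpha}\|\nabla u\|_{L^p(\Omega)}^{q-p})$ from Lemma \ref{lem:poincare-zero-boundary-values} must be tracked throughout.

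First I would fix radii $\tfrac{r}{2}\le s<t\le r$ and a standard cutoff $\eta\in C_c^\infty(B(z,t))$ with $\eta\equiv 1$ on $B(z,s)$, $0\le\eta\le 1$, and $|\nabla\eta|\le C/(t-s)$. Define $v:=u-\eta(u-(u)_{B(z,r)})$ and note that $\spt(u-v)\subset B(z,t)\Subset\Omega$. Feeding $v$ into the quasi-minimality inequality \eqref{eq:quasi-minimizer} and using \eqref{Functional-growth} yields
\begin{equation*}
    \int_{B(z,t)}\varphi(x,|\nabla u|)\,\dx
    \leq
    C_Q\tfrac{L}{\nu}\int_{B(z,t)}\varphi(x,|\nabla v|)\,\dx.
\end{equation*}
Using $|\nabla v|\le (1-\eta)|\nabla u|+|\nabla\eta|\,|u-(u)_{B(z,r)}|$ together with the doubling-type bound $\varphi(x,a+b)\le c(q)(\varphi(x,a)+\varphi(x,b))$, and adding the missing piece on $B(z,s)$ to both sides (the hole-filling trick) produces a constant $\vartheta\in(0,1)$ for which
\begin{equation*}
    \int_{B(z,s)}\varphi(x,|\nabla u|)\,\dx
    \leq
    \vartheta\int_{B(z,t)}\varphi(x,|\nabla u|)\,\dx
    +c\int_{B(z,r)}\varphi\Bigl(x,\tfrac{|u-(u)_{B(z,r)}|}{t-s}\Bigr)\dx.
\end{equation*}
Since $\varphi(x,\lambda\xi)\le \max\{\lambda^p,\lambda^q\}\varphi(x,\xi)$ for $\lambda\ge 1$, the last integrand is controlled by $\bigl(\tfrac{r}{t-s}\bigr)^q\varphi\bigl(x,|u-(u)_{B(z,r)}|/r\bigr)$, so I can apply the iteration Lemma~\ref{lem:iteration} with $\alpha=q$ on $[\tfrac{r}{2},r]$ to absorb the term with $\vartheta$ and arrive at
\begin{equation*}
    \int_{B(z,r/2)}\varphi(x,|\nabla u|)\,\dx
    \leq
    c\int_{B(z,r)}\varphi\Bigl(x,\tfrac{|u-(u)_{B(z,r)}|}{r}\Bigr)\dx,
\end{equation*}
with $c=c(n,p,q,\nu,L,C_Q)$.

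Finally I would invoke Lemma \ref{lem:poincare-zero-boundary-values} applied to $u$ on $B(z,r)\subset\Omega$: there exists $\theta_0=\theta_0(n,p,q)\in(0,1)$ such that for any $\theta\in[\theta_0,1]$
\begin{equation*}
    \bint_{B(z,r)}\varphi\Bigl(x,\tfrac{|u-(u)_{B(z,r)}|}{r}\Bigr)\dx
    \leq
    c\bigl(1+[a]_{0;\alpha}\|\nabla u\|_{L^p(\Omega)}^{q-p}\bigr)
    \Bigl(\bint_{B(z,r)}\varphi(x,|\nabla u|)^{\theta}\,\dx\Bigr)^{1/\theta};
\end{equation*}
strictly speaking Lemma~\ref{lem:poincare-zero-boundary-values} is stated for a single exponent $\theta$, but any larger $\theta\in(\theta_0,1]$ is handled by Jensen's inequality applied to the self-improving exponent. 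Combining this with the previous display and dividing by $|B(z,r/2)|\sim |B(z,r)|$ gives the claim.

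The main obstacle is the hole-filling and iteration in the presence of the non-homogeneous integrand $\varphi$, since the scaling factor $(r/(t-s))^q$ rather than $(r/(t-s))^p$ enters the right-hand side and must be compatible with the iteration Lemma~\ref{lem:iteration}. A secondary technical point is that the $(1+[a]_{0;\alpha}\|\nabla u\|_{L^p(\Omega)}^{q-p})$-factor originating from Lemma~\ref{lem:poincare-zero-boundary-values} appears only after feeding in Sobolev-Poincaré, so it must not be introduced prematurely during the Caccioppoli step; this separation keeps the constant in the Caccioppoli estimate independent of $\|\nabla u\|_{L^p}$.
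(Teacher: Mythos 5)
Your proposal is correct and follows essentially the same route as the paper's own proof: feed the comparison map $v=u-\eta(u-(u)_{B(z,r)})$ into quasi-minimality, perform hole-filling to get a Caccioppoli estimate with $\vartheta\in(0,1)$, invoke the iteration Lemma \ref{lem:iteration} on $[\tfrac{r}{2},r]$, and conclude with the Sobolev--Poincaré inequality of Lemma \ref{lem:poincare-zero-boundary-values}. The remark about upgrading the exponent from $\theta_0$ to any $\theta\in[\theta_0,1]$ via the power-mean inequality is the right observation and is consistent with the paper's statement.
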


\begin{proof}
Let $\frac{1}{2}<t<s\leq 1$ and consider a cut-off function $\eta \in C^\infty_0(B(z,sr);[0,1])$ with $\eta \equiv 1$ in $B(z,tr)$ and $|\nabla \eta| \leq \frac{2}{(s-t)r}$.
Using $v := u-\eta(u-(u)_{B(z,r)})$ as a comparison map in \eqref{eq:quasi-minimizer} and recalling that \eqref{Functional-growth} holds, we obtain that
\begin{align*}
    & \, \nu \int_{B(z,tr)} \varphi(x,|\nabla u|) \,\dx
    \leq
    \nu \int_{B(z,sr)} \varphi(x,|\nabla u|) \,\dx\\
    \leq
    & \, \mathcal{F}[u;\spt(v-u)] 
    \leq
    C_Q \mathcal{F}[v;\spt(v-u)]
    \leq
    C_Q L \int_{B(z,sr)} \varphi(x,|\nabla v|) \,\dx \\
    \leq
    &\, C_Q 2^{q-1} L \bigg[ \int_{B(z,sr) \setminus B(z,tr)} \varphi(x,|\nabla u|) \,\dx
    + \int_{B(z,sr)} \varphi\left(x, \frac{2|u-(u)_{B(z,r)}|}{(s-t)r} \right) \,\dx \bigg] \\
    \leq
    &\, C_Q 2^{q-1} L \int_{B(z,sr) \setminus B(z,tr)} \varphi(x,|\nabla u|) \,\dx
    + \frac{C_Q 2^{2q-1} L}{(s-t)^q}\int_{B(z,r)} \varphi\left(x, \frac{|u-(u)_{B(z,r)}|}{r} \right) \,\dx.
\end{align*}
Adding $C_Q 2^{q-1} L \int_{B(z,tr)} \varphi(x,|\nabla u|) \,\dx$ on both sides and dividing the result by $\nu + C_Q2^{q-1}L$, we conclude that
\begin{align*}
    \int_{B(z,tr)} \varphi(x,|\nabla u|) \,\dx
    \leq
    \vartheta \int_{B(z,sr)} \varphi(x,|\nabla u|) \,\dx
    +\frac{c(q,\nu,L,C_Q)}{(s-t)^q}\int_{B(z,r)} \varphi\left(x, \frac{|u-(u)_{B(z,r)}|}{r} \right) \,\dx,
\end{align*}
where
\begin{equation*}
    \vartheta := \frac{C_Q 2^{q-1}L}{\nu + C_Q 2^{q-1}L}.
\end{equation*}
Since $\vartheta \in (0,1)$, by applying Lemma \ref{lem:iteration} to $\phi \colon \left[\frac{1}{2},1\right] \to [0,+\infty)$ given by $\phi(s) := \int_{B(z,sr)} \varphi(x,|\nabla u|) \,\dx$ we infer
$$
    \int_{B\left(z,\frac{r}{2}\right)} \varphi(x,|\nabla u|) \,\dx
    \leq
    c(q,\nu,L,C_Q)\int_{B(z,r)} \varphi\left(x, \frac{|u-(u)_{B(z,r)}|}{r} \right) \,\dx.
$$
Finally, we estimate the right-hand side of the preceding inequality by Lemma \ref{lem:poincare-zero-boundary-values}.
This yields the claim of the lemma.
\end{proof}

The first step towards a boundary reverse Hölder's inequality, namely a boundary Caccioppoli inequality, was already proven in \cite[Lemma 7.4]{Karp21} for minimizers.
While the lemma applies to a wide class of generalized $\Phi$-functions $\varphi$ (see Remark \ref{rmk:orlicz}) including the double-phase integrand, we just state it for the latter.
For the sake of completeness, we adapt the proof to the case of quasi-minimizers.

\begin{lemma}
\label{lem:boundary-caccioppoli}
Let $\varphi$ be defined according to \eqref{eq:integrand} such that \eqref{as:a} and \eqref{eq:gap-exponents} hold with $1<p<q<+\infty$.
Let $\Omega \subset \R^n$ be a bounded open set, $z\in \Omega$ and $r>0$ such that $B(z,r)\setminus \Omega \neq \emptyset$, and assume that the functional $\mathcal{F}$ given by \eqref{Functional} fulfills \eqref{Functional-growth}.
Further, for $f\in W^{1,\varphi(\cdot)}(\Omega)$ let $u\in f + W^{1,\varphi(\cdot)}_0(\Omega)$ be a local $C_Q$-minimizer in the sense of Definition \ref{def:quasi-minimizer}.
Then there exists a constant $c=c(n,q, \nu, L, C_Q)$ such that
    \begin{align*}
        \frac{1}{\left|B\left(z, \frac{r}{2} \right)\right|} \int_{B(z, \frac{r}{2}) \cap \Omega} &\varphi(x,|\nabla u| )\, \dx \\
        &\leq 
        \frac{c}{|B(z,r)|} \int_{B(z,r)\cap \Omega} \varphi\left(x,\frac{|u-f|}{r}\right) \, \dx
        +
        \frac{c}{|B(z,r)|} \int_{B(z,r)\cap \Omega} \varphi\left(x,|\nabla f|\right) \, \dx.
    \end{align*}
\end{lemma}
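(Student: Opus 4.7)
The plan is to mimic the interior Caccioppoli estimate of Lemma \ref{lem:interior-reverse-holder}, replacing the interior comparison map by a boundary competitor that interpolates between $u$ and the boundary datum $f$. Fixing radii $\tfrac{1}{2} \leq t < s \leq 1$, I would choose a cut-off $\eta \in C^\infty_0(B(z,sr); [0,1])$ with $\eta \equiv 1$ on $B(z,tr)$ and $|\nabla \eta| \leq 2/((s-t)r)$, and set
$$
    v := (1-\eta)u + \eta f = u - \eta(u-f).
$$
Since $u - f \in W^{1,\varphi(\cdot)}_0(\Omega)$, a standard approximation of $u-f$ by functions compactly supported in $\Omega$ makes $v$ admissible in \eqref{eq:quasi-minimizer}; by construction $v = f$ on $B(z,tr)\cap \Omega$ and $v = u$ outside $B(z,sr)$.

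Inserting $v$ into \eqref{eq:quasi-minimizer} and using that $\nabla u = \nabla v$ a.e. on $\Omega \setminus \spt(u-v)$, the two-sided growth condition \eqref{Functional-growth} upgrades the quasi-minimality to
$$
    \nu \int_{\Omega \cap B(z,sr)} \varphi(x,|\nabla u|)\,\dx \leq C_Q L \int_{\Omega \cap B(z,sr)} \varphi(x,|\nabla v|)\,\dx.
$$
On $B(z,tr)\cap\Omega$ we have $\nabla v = \nabla f$, whereas on the annulus $B(z,sr)\setminus B(z,tr)$ there holds $|\nabla v| \leq |\nabla u| + |\nabla f| + 2|u-f|/((s-t)r)$. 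Splitting the right-hand side accordingly and applying the elementary estimate $\varphi(x, A+B+C) \leq c(q)[\varphi(x,A)+\varphi(x,B)+\varphi(x,C)]$ together with the monotonicity $\varphi(x,\tau/(s-t)) \leq (s-t)^{-q}\varphi(x,\tau)$ for $0 < s-t \leq 1$, one arrives at
\begin{align*}
    \int_{\Omega \cap B(z,tr)} \varphi(x,|\nabla u|)\,\dx
    &\leq K \int_{\Omega \cap (B(z,sr)\setminus B(z,tr))} \varphi(x,|\nabla u|)\,\dx \\
    &\phantom{=}+ c \int_{\Omega\cap B(z,r)} \varphi(x,|\nabla f|)\,\dx + \frac{c}{(s-t)^q} \int_{\Omega\cap B(z,r)} \varphi\Big(x,\tfrac{|u-f|}{r}\Big)\,\dx,
\end{align*}
with $K = c(q)C_QL/\nu$.

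The next step is the standard filling-the-hole trick: adding $K \int_{\Omega\cap B(z,tr)} \varphi(x,|\nabla u|)\,\dx$ to both sides and dividing by $1+K$ yields an inequality of the shape $\phi(t) \leq \vartheta \phi(s) + A(s-t)^{-q} + B$ with $\vartheta := K/(1+K) \in (0,1)$, where $\phi(\tau) := \int_{\Omega \cap B(z,\tau r)} \varphi(x,|\nabla u|)\,\dx$. An application of Lemma \ref{lem:iteration} on $[\tfrac{1}{2}, 1]$ absorbs the term $\vartheta \phi(s)$, and dividing through by $|B(z,\tfrac{r}{2})|$ (using $|B(z,r)| = 2^n |B(z,\tfrac{r}{2})|$) delivers the claim. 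The only delicate point I anticipate is verifying admissibility of $v$ in Definition \ref{def:quasi-minimizer}, since a priori $\spt(u-v) = \spt(\eta(u-f))$ need not be compactly contained in $\Omega$; this is handled routinely by approximating $u-f$ in the Orlicz--Sobolev norm by smooth functions with compact support in $\Omega$ and passing to the limit via the growth condition \eqref{Functional-growth}.
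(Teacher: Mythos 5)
Your proposal is correct and follows essentially the same route as the paper: the same cut-off $\eta$, the same competitor $v = u - \eta(u-f)$, the same splitting of $\nabla v$ and three-term estimate with constant $3^{q-1}$, and the same filling-the-hole trick followed by Lemma~\ref{lem:iteration}. The admissibility concern you flag at the end is actually moot: Definition~\ref{def:quasi-minimizer} only requires $\spt(u-v)\subset\Omega$ (not compact containment), which holds automatically since $u-v=\eta(u-f)$ lives on $\Omega$, so no approximation is needed.
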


\begin{proof}
For $\frac{1}{2}<t<s\leq 1$, we consider a cut-off function $\eta \in C^\infty_0(B(z,sr);[0,1])$ with $\eta \equiv 1$ in $B(z,tr)$ and $|\nabla \eta| \leq \frac{2}{(s-t)r}$.
Choosing $v := u - \eta(u-f)$ as comparison map in \eqref{eq:quasi-minimizer}, we obtain that
\begin{align*}
    & \, \nu \int_{B(z,tr) \cap \Omega} \varphi(x,|\nabla u|) \,\dx
    \leq
    \nu \int_{B(z,sr) \cap \Omega} \varphi(x,|\nabla u|) \,\dx\\
    \leq
    & \, \mathcal{F}[u;\spt(v-u)]
    \leq
    C_Q \mathcal{F}[v;\spt(v-u)]
    \leq
    C_Q L \int_{B(z,sr) \cap \Omega} \varphi(x,|\nabla v|) \,\dx \\
    \leq
    & \, C_Q 3^{q-1} L \bigg[
    \int_{(B(z,sr) \setminus B(z,tr)) \cap \Omega} \varphi(x,|\nabla u|) \,\dx
    + \frac{2^q}{(s-t)^q}\int_{B(z,r) \cap \Omega} \varphi\left(x,\frac{|u-f|}{r}\right) \,\dx
    \\&\phantom{=}
    + \int_{B(z,r) \cap \Omega} \varphi(x,|\nabla f|) \,\dx
    \bigg].
\end{align*}
By adding $C_Q 3^{q-1} L \int_{B(z,tr) \cap \Omega} \varphi(x,|\nabla u|) \,\dx$ on both sides and dividing the result by $\nu + C_Q 3^{q-1} L$, we deduce that
\begin{align*}
    \int_{B(z,tr) \cap \Omega} \varphi(x,|\nabla u|) \,\dx
    &\leq
    \vartheta \int_{B(z,sr) \cap \Omega} \varphi(x,|\nabla u|) \,\dx
    + \frac{c(q,\nu,L,C_Q)}{(s-t)^q}\int_{B(z,r) \cap \Omega} \varphi\left(x,\frac{|u-f|}{r}\right) \,\dx
    \\&\phantom{=}
    + c(q,\nu,L,C_Q) \int_{B(z,r) \cap \Omega} \varphi(x,|\nabla f|) \,\dx,
\end{align*}
where
$$
   \vartheta := \frac{C_Q 3^{q-1} L}{\nu + C_Q 3^{q-1} L} \in (0,1).
$$
At this stage, we conclude the claim of the lemma by applying Lemma \ref{lem:iteration} with $\phi \colon \left[\frac{1}{2},1\right] \to [0,+\infty)$, $\phi(s) := \int_{B(z,sr) \cap \Omega} \varphi(x,|\nabla u|) \,\dx$.
\end{proof}

With these prerequisites in mind, we can finally prove the global higher integrability, where the main remaining task is to combine the Maz'ya inequality of Lemma \ref{lem:protoMazya-type-ineq} with the self-improving property of the capacity, namely Theorem \ref{thm:pselfimproving2}, and the previous boundary Caccioppoli inequality.
\\[0.2cm]
\begin{proof}[Proof of Theorem \ref{thm:higher_int}]
    Let $z\in \Omega$. We distinguish two cases for the behavior of the coefficient $a$ on the boundary. If $a>0$ on the entirety of $\partial\Omega$, we set $P(r_a)=P(r_a/2)=\emptyset$ and $\hat{r}_0=r_0$. Otherwise, we define
    \begin{equation*}
        P(r_a):= (\{x\in \partial\Omega \ : \ a(x)=0\} + B(0,r_a))\setminus \Omega
    \end{equation*}
    and note, by assumption of the Theorem, that every point of $P(r_a)$ is locally uniformly $p$-fat, whereas $\R^n \setminus (\Omega\cup P(r_a))$ is locally uniformly $q$-fat for the parameters $c_\mathrm{fat}$ and $\hat{r}_o$ independently of the considered case. Let us furthermore define the possibly empty set $\mathcal{P}:=P(r_a/2)$.
    Since the set $\partial\Omega\setminus \mathcal{P}$ is compact, if it is non-empty, the minimum
    \begin{equation*}
        a_\mathcal{P} := \inf\limits_{\partial\Omega\setminus \mathcal{P}} a(x)
    \end{equation*}
    exists and we have that $a_\mathcal{P}>0$. Now, consider some arbitrary radius
    \begin{equation*}
        0<r\leq \frac{1}{8}\min\left\{ \hat{r}_o,\bigg(\frac{a_\mathcal{P}}{2(1+[a]_{0,\alpha})}\bigg)^{1/\alpha} \right\},
    \end{equation*}
    where we set $a_\mathcal{P}=\infty$ if $\mathcal{P}=\emptyset$. We start by establishing a reverse Hölder's inequality for the ball $B(z,r)$. Indeed, if $B(z,r)\subset \Omega$, thanks to Lemma \ref{lem:interior-reverse-holder} there is nothing left to prove.
    Otherwise, there exists $y\in B(z,r)\cap \partial \Omega$.
    Now, we consider the ball $B(y,2r)$ which contains $B(z,r)$ and extend the function $v:=u-f$ by zero to $\R^n$. Note that since $v \in W^{1,\varphi(\cdot)}_0(\Omega)$, we have that $v \in W^{1,p}_0(\Omega)$ and $v \in W^{1,q}(O)$ for every open $O$ such that $\overline{O}\cap \Omega$ is contained in the set $\{x\in\Omega\ : \ a(x)\neq 0\}$. Let us now establish the reverse Hölder's inequality at the boundary.
    To this end, we distinguish between two scenarios for the center $y$. \\
    \textbf{If there holds $\bm{y\in \mathcal{P}}$}, we have, thanks to the choice of $r$ and the definition of $\mathcal{P}$, that every point in the neighborhood $\overline{B(y,r_a/2)}\setminus \Omega$ of $y$ is locally uniformly $p$-fat in the sense of Definition \ref{def:inf-capacity-density}. Since $r\leq \hat{r}_0/8\leq r_a/8$, we have according to Theorem \ref{thm:pselfimproving2} that  
    \begin{equation*}
        \cp_{p-\varepsilon}((\R^n \setminus \Omega)\cap B(y,r);B(y,2r)) 
        \geq 
        c(n,p,c_\mathrm{fat}) \cp_{p-\varepsilon}(B(y,r);B(y,2r)),
    \end{equation*}
    where $0<\varepsilon(n,p,c_\mathrm{fat})\leq p-1$ is the constant of Theorem \ref{thm:pselfimproving2}. For said constant, we can assume without loss of generality that
    \begin{equation}\label{aux:thm1-comparisionvarepsilon}
        \varepsilon(n,p,c_\mathrm{fat})\leq \varepsilon(n,q,c_\mathrm{fat})
    \end{equation}
    due to \eqref{eq:gap-exponents}. We then obtain for the vanishing set $Z:=\{x\in B(y,r) : v(x)=0\}$ of $v$ the lower bound
    \begin{equation}
    \label{aux:thm1-1}
    \begin{aligned}
        \cp_{p-\varepsilon}(Z;B(y,2r)) 
        &\geq \cp_{p-\varepsilon}((\R^n \setminus \Omega)\cap B(y,r);B(y,2r)) \\
        &\geq c(n,p,c_\mathrm{fat}) \cp_{p-\varepsilon}(B(y,r);B(y,2r)) \\
        &\geq c(n,p,c_\mathrm{fat}) (2r)^{n-p+\varepsilon},
    \end{aligned}
    \end{equation}    
    where we have used the monotonicity of the variational capacity and the lower bound \eqref{eq:scaling-p-capacity}.\\
    In order to continue with the proof, we would like to apply Lemma \ref{lem:protoMazya-type-ineq} with a suitable parameter $m$, i.e.~a Maz'ya type inequality, to $v$.
    While the lemma is formulated for Lipschitz functions, an approximation argument shows that it continues to hold for functions $u \in W^{1,m}(\Omega)$.\\
    To this end, we have to distinguish between the cases $p\leq n$ and $p>n$. In the former case, we choose $p-\tfrac{p\varepsilon}{q}<m<p\leq n$ such that
    \begin{equation}\label{aux:thm1-choiceofm}
        m \geq p \, \theta_0,\qquad
        q\leq \frac{nm}{n-m}
        \qquad\text{and}\qquad
        q\leq p  + \frac{m\alpha}{n} ,
    \end{equation}
    while in the latter case we choose $n<m<p$ such that
    \begin{equation}\label{aux:thm1-choiceofm2}
        m > p - \frac{p\,\varepsilon}{q},\qquad
        m \geq p \, \theta_0
        \qquad\text{and}\qquad
        q\leq p  + \frac{m\alpha}{n},
    \end{equation}
    where $\theta_0 = \theta_0(n,p,q) \in (0,1)$ denotes the constant of Lemma \ref{lem:interior-reverse-holder}.
    Note that such a choice of $m=m(n,p,q,\alpha)$ is possible due to Remark \ref{rmk:choiceofm} and assumption \eqref{eq:gap-exponents-strict}.
    Moreover, observe that the latter upper bound on $q$ implies the inequality
    \begin{equation}
    \label{aux:ineq_1}
        \alpha-\frac{n(q-p)}{m}\geq 0.
    \end{equation}
    
    Now, notice that Hölder's inequality and \eqref{aux:thm1-1} imply
    \begin{equation*}
        \cp_m(Z;B(y,2r))\geq c(n,p,q,\alpha,c_\mathrm{fat}) (2r)^{n-m}
    \end{equation*}
    and therefore, after rescaling as in \eqref{eq:equivalence-density-aux}, that
    \begin{equation*}
        \cp_m\left(\left\{ x\in \overline{B(0,1/2)}: u(y+2rx)=0 \right\};B(0,1)\right)\geq c(n,p,q,\alpha,c_\mathrm{fat}).
    \end{equation*}
    If we define $\theta_1:=m/p \in [\theta_0,1)$, and apply Lemma \ref{lem:protoMazya-type-ineq} on $v$ with the parameter choices $(r,R,z,y)\equiv (2r,2r,y,y)$, thanks to the previous inequality we obtain that
    \begin{align*}
        \bint_{B(y,2r)\cap \Omega} & \varphi^+_{y,2r}\left(\frac{|v|}{r}\right) \,\dx
        \\ &\leq
        c \big[ 1 + a^+_{y,2r}  (\nabla v)_{B(y,2r),m}^{q-p} \big] 
        \bigg( \bint_{B(y,2r)} | \nabla v|^m \,\dx \bigg)^{\frac{p}{m}}
        \\ &=
        c \big[ 1 + a^+_{y,2r}  (\nabla v)_{B(y,2r),p\theta_1}^{q-p} \big] 
        \bigg( \bint_{B(y,2r)} | \nabla v|^{p\theta_1} \,\dx \bigg)^{\frac{1}{\theta_1}}
    \end{align*}
    for a constant $c=c(n,p,q,\alpha,c_\mathrm{fat})$. Let us now estimate the right-hand side further using the inequality 
    \begin{equation*}
        a^+_{y,2r} \leq a^-_{y,2r} + [a]_{0,\alpha} (2r)^\alpha.
    \end{equation*}
    We observe in a first step that Jensen's inequality and the definition of $a^-_{y,2r}$ imply that
    \begin{align*}
        a^-_{y,2r} &(\nabla v)_{B(y,2r),p\theta_1}^{q-p} \bigg( \bint_{B(y,2r)} | \nabla v|^{p\theta_1} \,\dx \bigg)^{\frac{1}{\theta_1}} \\
        = \, & a^-_{y,2r} \bigg( \bint_{B(y,2r)} | \nabla v|^{p\theta_1} \,\dx \bigg)^{\frac{q}{p \theta_1}}
        \leq
        a^-_{y,2r} \bigg( \bint_{B(y,2r)} | \nabla v|^{q\theta_1} \,\dx \bigg)^{\frac{1}{\theta_1}} \\
        = \,
        & \bigg( \bint_{B(y,2r)} \big(a^-_{y,2r}\big)^{\theta_1}  | \nabla v|^{q\theta_1} \,\dx \bigg)^{\frac{1}{\theta_1}}
        \leq
        \bigg( \bint_{B(y,2r)} \varphi^{\theta_1}(x, |\nabla v|) \,\dx \bigg)^{\frac{1}{\theta_1}} \\
        \leq \,
        & c \bigg( \frac{1}{|B(y, 2r)|} \int_{B(y,2r)\cap\Omega} \left[\varphi^{\theta_1}(x, |\nabla u|) + \varphi^{\theta_1}(x, |\nabla f|) \right] \,\dx \bigg)^{\frac{1}{\theta_1}} \\
        \leq \,
        & c \bigg( \frac{1}{|B(y,2r)|} \int_{B(y,2r)\cap\Omega} \varphi^{\theta_1}(x, |\nabla u|) \,\dx \bigg)^{\frac{1}{\theta_1}} 
        + 
        c \, \frac{1}{|B(y,2r)|} \int_{B(y,2r)\cap\Omega} \varphi(x, |\nabla f|) \,\dx.
    \end{align*}
    with a constant $c=c(n,p,q,\alpha,c_\mathrm{fat})$. 
    Next, due to %\eqref{aux:thm1-f},
    \eqref{aux:ineq_1} and the fact that $2r\leq 1$ we observe that
    \begin{align*}
        [a]_{0,\alpha} (&2r)^\alpha  (\nabla v)_{B(y,2r),p\theta_1}^{q-p} \bigg( \bint_{B(y,2r)} | \nabla v|^{p\theta_1} \,\dx \bigg)^{\frac{1}{\theta_1}} \\
        &=
        [a]_{0,\alpha} (2r)^{\alpha-\frac{n(q-p)}{m}} \| \nabla v\|_{L^{p\theta_1}(B(y,2r))}^{q-p} \bigg( \bint_{B(y,2r)} | \nabla v|^{p\theta_1} \,\dx \bigg)^{\frac{1}{\theta_1}} \\
        &\leq
        c \, [a]_{0,\alpha} 
        \| \nabla (u-f)\|_{L^{p}(\Omega)}^{\theta_1(q-p)}
        \\ & \qquad\cdot
        \Bigg[\bigg( \frac{1}{|B(y,2r)|} \int_{B(y,2r)\cap\Omega} \varphi^{\theta_1}(x, |\nabla u|) \,\dx \bigg)^{\frac{1}{\theta_1}} 
        + 
        \frac{1}{|B(y,2r)|} \int_{B(y,2r)\cap\Omega} \varphi(x, |\nabla f|) \,\dx  \Bigg],
    \end{align*}
    with a constant $c=c(n,p,q,\alpha,c_\mathrm{fat})$. Combining the preceding estimates and using the fact that $\varphi(x,\cdot)\leq \varphi^+_{y,2r}(\cdot)$ on $B(y,2r)$, we get that
    \begin{align}\label{aux:thm1-2}
        \frac{1}{|B(y,2r)|} & \int_{B(y,2r)\cap \Omega} \varphi\left(x,\frac{|u-f|}{r}\right) \,\dx
        \nonumber \\ &\leq
        c \Big[ 1
        + [a]_{0,\alpha}
        \| \nabla (u-f) \|_{L^{p}(\Omega)}^{\theta_1(q-p)}
        \Big] 
        \Bigg[\bigg( \frac{1}{|B(y,2r)|} \int_{B(y,2r)\cap\Omega} \varphi^{\theta_1}(x, |\nabla u|) \,\dx \bigg)^{\frac{1}{\theta_1}} 
        \nonumber\\ & \qquad+ 
        \frac{1}{|B(y,2r)|} \int_{B(y,2r)\cap\Omega} \varphi(x, |\nabla f|) \,\dx  \Bigg]
    \end{align}
    with a constant $c=c(n,p,q,\alpha,c_\mathrm{fat})$.
    \\
    \textbf{Now, assume on the contrary that there holds $\bm{y\in \partial\Omega\setminus \mathcal{P}}$}, which can only be the case if the set is non-empty. To treat this case, we observe that $\R^n\setminus \Omega$ is locally uniformly $q$-fat and, accordingly, Theorem \ref{thm:pselfimproving2} in conjunction with \eqref{aux:thm1-comparisionvarepsilon} yields that $\R^n\setminus \Omega$ is locally uniformly $(q-\varepsilon)$-fat. Furthermore, we have in similarity to \eqref{aux:thm1-1} that the estimate 
    \begin{equation}
    \label{aux:thm1-3}
    \begin{aligned}
        \cp_{q-\varepsilon}(Z;B(y,2r)) 
        &\geq c(n,p,c_\mathrm{fat}) (2r)^{n-q+\varepsilon}
    \end{aligned}
    \end{equation}
    holds. But this allows us to apply the classical Maz'ya inequality of Lemma \ref{lem:classical-Mazya} on $v$ with parameters $(p,s)\equiv (q\theta_1,q)$, where $m$ is again chosen according to \eqref{aux:thm1-choiceofm} if $p \leq n$ and according to \eqref{aux:thm1-choiceofm2} if $p>n$, respectively, and $\theta_1 := m/p$. To ensure its applicability, we need to consider the case $q\theta_1 <n$ and verify the additional condition that $q\leq n\theta_1 q / ( n-\theta_1 q)$. By definition of $\theta_1$, we have $m=\theta_1 p$ which implies $m<n$. But this contradicts the choice \eqref{aux:thm1-choiceofm2} for $m$ in the case $p>n$, i.e., that $n<m<p$. Thus, there must hold $p\leq n$. Now, we can use the second inequality of \eqref{aux:thm1-choiceofm} to derive the estimate
    \begin{equation*}
        n-q\theta_1 \leq n - p\theta_1 \leq \frac{n p \theta_1}{q} \leq n\theta_1
    \end{equation*}
    which yields the desired upper bound. Since $q \theta_1 > q - \varepsilon$, we further observe that Hölder's inequality and the inequality \eqref{aux:thm1-3} imply that \eqref{aux:thm1-3} also holds if $q-\varepsilon$ is replaced with $\theta_1 q$. If we use this to estimate the right-hand side of Lemma \ref{lem:classical-Mazya}, we observe that
    \begin{align*}
        \bint_{B(y,2r)} \bigg(\frac{|v|}{r}\bigg)^q \, \dx 
        \leq 
        c(n,p,q,\alpha) \bigg( \bint_{B(y,2r)} |\nabla v|^{q\theta_1} \, \dx\bigg)^{\frac{1}{\theta_1}} .
    \end{align*}
    By the assumptions on $r$ we have that $[a]_{0,\alpha} (2r)^\alpha < a_\mathcal{P}$. This allows us to use the preceding estimate and, in case $q>p$, Young's inequality with exponents $(\tfrac{q}{p},\tfrac{q}{q-p})$ to obtain 
    \begin{align}\label{aux:thm1-4}
        \frac{1}{|B(y,2r)|} & \int_{B(y,2r)\cap \Omega}  \varphi\left(x,\frac{|u-f|}{r}\right) \,\dx
        \nonumber \\ & \leq
        (a_\mathcal{P}+a^-_{y,2r}+[a]_{0,\alpha} (2r)^\alpha ) \bint_{B(y,2r)} \bigg(\frac{|v|}{r}\bigg)^q \, \dx +  a_\mathcal{P}^{-\frac{p}{q-p}}
        \nonumber \\ & \leq
        3a^-_{y,2r} \bigg( \bint_{B(y,2r)} |\nabla v|^{q\theta_1} \, \dx\bigg)^{\frac{1}{\theta_1}}
        +
        a_\mathcal{P}^{-\frac{n}{p\alpha}}
        \nonumber \\ & \leq
        3 \bigg( \bint_{B(y,2r)} \varphi^{\theta_1}(x,|\nabla v|) \, \dx\bigg)^{\frac{1}{\theta_1}} 
        +
        a_\mathcal{P}^{-\frac{n}{p\alpha}},
    \end{align}
    with $c=c(n,p,q,\alpha)$. We note that the same estimate also holds for $q=p$.
    \\
    Now, we proceed with both cases simultaneously. To this end, we combine \eqref{aux:thm1-2} and \eqref{aux:thm1-4} and we obtain the estimate
    \begin{align*}
        \frac{1}{|B(y,2r)|} & \int_{B(y,2r)\cap \Omega} \varphi\left(x,\frac{|u-f|}{r}\right) \,\dx
        \nonumber \\ &\leq
        c \Big[ 1
        + [a]_{0,\alpha}
        \| \nabla (u-f) \|_{L^{p}(\Omega)}^{\theta_1(q-p)}
        \Big] 
        \Bigg[\bigg( \frac{1}{|B(y,2r)|} \int_{B(y,2r)\cap\Omega} \varphi^{\theta_1}(x, |\nabla u|) \,\dx \bigg)^{\frac{1}{\theta_1}} 
        \nonumber\\ & \qquad+ 
        \frac{1}{|B(y,2r)|} \int_{B(y,2r)\cap\Omega} \varphi(x, |\nabla f|) \,\dx \Bigg] 
        +
        a_\mathcal{P}^{-\frac{n}{p\alpha}}.
    \end{align*}
    If we further apply the Caccioppoli inequality of Lemma \ref{lem:boundary-caccioppoli} together the previous estimate, we infer the estimate
    \begin{align*}
        \frac{1}{|B\left(z, \frac{r}{2}\right)|} & \int_{B\left(z, \frac{r}{2}\right)\cap \Omega} \varphi(x,|\nabla u| )\, \dx
        \\ & \leq 
        \frac{c}{|B(z,r)|} \int_{B(z,r)\cap \Omega} \varphi\left(x,\frac{|u-f|}{r}\right) \, \dx
        +
        \frac{c}{|B(z,r)|} \int_{B(z,r)\cap \Omega} \varphi\left(x,|\nabla f|\right) \, \dx
        \\ & \leq 
        \frac{c}{|B(y,2r)|} \int_{B(y,2r)\cap \Omega} \varphi\left(x,\frac{|u-f|}{r}\right) \, \dx
        +
        \frac{c}{|B(z,3r)|} \int_{B(z,3r)\cap \Omega} \varphi\left(x,|\nabla f|\right) \, \dx
        \\ & \leq 
        c\Big[ 1+ [a]_{0,\alpha}\| \nabla (u-f) \|_{L^{p}(\Omega)}^{\theta_1(q-p)}\Big] \Bigg[ \bigg( \frac{1}{|B(z,3r)|} \int_{B(z,3r)\cap\Omega} \varphi^{\theta_1}(x, |\nabla u|) \,\dx \bigg)^{\frac{1}{\theta_1}}
        \\ & \quad +
        \frac{c}{|B(z,3r)|} \int_{B(z,3r)\cap \Omega} \varphi\left(x,|\nabla f|\right) \, \dx
        +
        a_\mathcal{P}^{-\frac{n}{p\alpha}}\bigg]
        \\ & \leq 
        c\Big[ 1+ [a]_{0,\alpha} \big( \| \nabla u \|_{L^{p}(\Omega)} + \| \nabla f \|_{L^{p}(\Omega)} \big)^{\theta_1(q-p)}\Big] \Bigg[ \bigg( \frac{1}{|B(z,3r)|} \int_{B(z,3r)\cap\Omega} \varphi^{\theta_1}(x, |\nabla u|) \,\dx \bigg)^{\frac{1}{\theta_1}}
        \\ & \quad +
        \frac{c}{|B(z,3r)|} \int_{B(z,3r)\cap \Omega} \varphi\left(x,|\nabla f|\right) \, \dx
        +
        a_\mathcal{P}^{-\frac{n}{p\alpha}}\bigg],
    \end{align*}
    with a constant $c=c(n,p,q,\alpha,[a]_{0,\alpha},\nu,L,c_\mathrm{fat},C_Q)$.
    Note that, due to Lemma \ref{lem:interior-reverse-holder}, the previous inequality also holds if $B(r,z)\subset\Omega$, and therefore, for every $z\in \Omega$.
    Let us now define the functions $g,h\colon \R^n \to \R_{\geq 0}$ as
    \begin{equation*}
        g(x) =\begin{cases}
            \varphi(x,|\nabla u| ) & \textnormal{for } x\in \Omega,\\
            0 & \text{else},
        \end{cases}
    \end{equation*}
    and
    \begin{equation*}
        h(x) = a_\mathcal{P}^{-\frac{n}{p\alpha}} + \begin{cases}
            \varphi(x,|\nabla f| )  & \text{for } x\in \Omega,\\
            0 & \text{else}.
        \end{cases}
    \end{equation*}
    Then the preceding shows
    \begin{align*}
        \bint_{B\left(z,\frac{r}{2}\right)} & g \, \dx
         \\ & \leq
         c\Big[ 1+ [a]_{0,\alpha} \big( \| \nabla u \|_{L^{p}(\Omega)} + \| \nabla f \|_{L^{p}(\Omega)} \big)^{\theta_1(q-p)}
         \Big] \Bigg[\bigg( \bint_{B(z,3r)} g^{\theta_1} \,\dx \bigg)^{\frac{1}{\theta_1}}
        +
         \bint_{B(z,3r)} h \, \dx \Bigg].
    \end{align*}
    Applying Gehring's Lemma, i.e.~Lemma \ref{Gehring's Lemma}, yields the existence of a constant $c$ and exponent $1<\sigma\leq \sigma_0$, both depending on $n,p,q,\alpha,[a]_{0,\alpha},\nu,L,c_\mathrm{fat},C_Q,\sigma_0$ and$\| \nabla u \|_{L^{p}(\Omega)}+ \| \nabla f \|_{L^{p}(\Omega)}$, such that
    \begin{align}
        \bigg(\bint_{B\left(z,\frac{r}{2}\right)} g^\sigma \, \dx \bigg)^{\frac{1}{\sigma}}
        \leq 
        c \Bigg[ \bint_{B(z,4r)} g \,\dx 
        +
        \bigg( \bint_{B(z,4r)} h^\sigma \, \dx \bigg)^{\frac{1}{\sigma}} \Bigg].
        \label{eq:higher-int-aux}
    \end{align}
    At this stage, we prove global higher integrability by the following covering argument.
    Let
    \begin{equation*}
        \varrho = \frac{1}{8}\min\left\{\hat{r}_o,\bigg(\frac{a_\mathcal{P}}{2(1+[a]_{0,\alpha})}\bigg)^{1/\alpha} \right\}.
    \end{equation*}
    Consider the collection of balls
    \begin{equation*}
        \mathcal{B} = \left\{ B\left(z,\frac{\varrho}{10} \right) \ : \ z\in \Omega \right\}
    \end{equation*}
    covering $\overline{\Omega}$.
    By Vitali's covering theorem (see for instance \cite[Lemma 1.13]{KLV21} for a version using open balls) and the fact that $\overline{\Omega}$ is compact, there exist $k\in \N$ and points $z_1,\ldots,z_k\in \Omega$ such that the balls $B\left( z_i, \frac{\varrho}{10} \right)$ for $i=1,\ldots,k$ are disjoint and
    $$
        \overline{\Omega}
        \subset
        \bigcup_{i=1}^k B\left(z_i, \frac{\varrho}{2} \right).
    $$
    We have chosen this approach since it allows us to determine $k$ in terms of $n$, $\Omega$ and $\varrho$.
    Indeed, since $B\left( z_i, \frac{\varrho}{10} \right) \subset B\left( z_o, \frac{\varrho}{10} + \diam(\Omega) \right)$ for any $z_o \in \Omega$ and $i=1,\ldots,k$ and since the balls $B\left( z_i, \frac{\varrho}{10} \right)$ are pairwise disjoint, we find that
    \begin{equation*}
        k |B(0,1)| \left(\frac{\varrho}{10}\right)^n
        =
        \sum_{i=1}^k \left| B\left( z_i, \frac{\varrho}{10} \right) \right|
        \leq 
        \left| B\left( z_o, \frac{\varrho}{10} + \diam(\Omega) \right) \right|
        = 
        |B(0,1)|\left( \frac{\varrho}{10} + \diam(\Omega) \right)^n.
    \end{equation*}
    Therefore, we have that
    \begin{equation*}
        k \leq \left( 1 + \frac{10 \diam(\Omega)}{\varrho}\right)^n .
    \end{equation*}
    Now, we apply \eqref{eq:higher-int-aux} on the individual balls $B\left( z_i, \frac{\varrho}{2} \right)$.
    Taking also the preceding estimate for $k$ into account and recalling the definitions of $g$ and $h$, we obtain that
    \begin{align*}
        \bigg(\int_\Omega &\varphi^\sigma(x,|\nabla u| ) \, \dx \bigg)^{\frac{1}{\sigma}}
        \leq
        \sum_{i=1}^k \bigg( \int_{B\left( z_i, \frac{\varrho}{2} \right)} g^\sigma \, \dx \bigg)^{\frac{1}{\sigma}}
        \\& \leq
        c \sum_{i=1}^k \Bigg[ \varrho^{\frac{(1-\sigma)n}{\sigma}} \int_{B(z,3\varrho)} g \,\dx 
        +
        \bigg( \int_{B(z,3\varrho)} h^\sigma \, \dx \bigg)^{\frac{1}{\sigma}} \Bigg]
        \\ & \leq 
        c k \Bigg[ \varrho^{\frac{(1-\sigma)n}{\sigma}} \int_{\Omega}  \varphi(x,|\nabla u|) \,\dx 
        +
        \bigg( \int_{\Omega} \varphi^\sigma(x,|\nabla f|) \, \dx \bigg)^{\frac{1}{\sigma}}
        +
        \max\Big\{ 1 , a_\mathcal{P}^{-\frac{n}{p\alpha}} \Big\} \Bigg]
        \\ & \leq 
        c \left( 1 + \frac{\diam(\Omega)}{\varrho}\right)^n
        \nonumber\\ & \qquad\cdot 
        \Bigg[ \varrho^{\frac{(1-\sigma)n}{\sigma}} \int_{\Omega}  \varphi(x,|\nabla u|) \,\dx 
        +
        \bigg( \int_{\Omega} \varphi^\sigma(x,|\nabla f|) \, \dx \bigg)^{\frac{1}{\sigma}} 
        +
        a_\mathcal{P}^{-\frac{n}{p\alpha}}\Bigg].
    \end{align*}
    This concludes the proof of the theorem.
\end{proof}

\begin{remark}
\label{rem:optimality}
    If $a>0$ on the entirety of $\partial\Omega$, the assumption on $\R^n \setminus \Omega$ in Theorem \ref{thm:higher_int} reduces to $q$-fatness, and if $a \equiv 0$ on $\partial\Omega$, we assume that $\R^n \setminus \Omega$ is locally $p$-fat.
    According to \cite{BP10}, in both of these cases, the respective fatness condition is optimal.
    Moreover, the following example shows that we have to assume local $p$-fatness of $\R^n \setminus \Omega$ at least at the boundary points where $a$ vanishes.
    Indeed, let $0<p<n$ and let $K \subset B(0,1)$ be a compact set such that $0 \in K$,
    \begin{equation*}
        \cp_p(K,B(0,2))>0 
        \qquad\text{and}\qquad
        \dim_\mathcal{H} (K) = n-p,
    \end{equation*}
    where $\dim_\mathcal{H}$ denotes the Hausdorff dimension.
    %, and assume that $a \colon \R^n\to \R_{\geq 0}$ is an $\alpha$-Hölder continuous function with support in $\R^n\setminus K$.
    Further, set $\Omega=B(0,2)\setminus K$, consider a function $g\in C^\infty_0(B(0,2))$ with $g\equiv 1$ on $K$ and let $u$ denote the minimizer of the $p$-energy functional
    \begin{equation*}
        v \mapsto \int_\Omega |Dv|^p \, \dx
    \end{equation*}
    in the class $g+W^{1,p}_0(\Omega)$.
    Since $u$ is a weak solution to the $p$-Laplacian, $\partial B(0,2)$ is smooth and $g$ vanishes in the vicinity of $\partial B(0,2)$, we have that $u\in C^{1,\beta}_\mathrm{loc}(\Omega\cup \partial B(0,2))$ for some $\beta>0$, see e.g.~\cite{H92,L83,U77,U68}, and therefore
    \begin{equation*}
        \sup\limits_{x\in \overline{B(0,2)}\setminus B(0,1)} |Du|(x) < \infty.
    \end{equation*}
    Furthermore, for any cube $B(x,4r)\subset \Omega$ we have the quantitative estimate
    \begin{equation*}
        \sup\limits_{z\in B(x,r)} |Du|^p(z) \leq C(n,p) \frac{1}{r^n} \int_{B(x,2r)} |Du|^p \, \dz.
    \end{equation*}
    Now let $r=\dist(x,\partial \Omega)/4$ in the above formula and note that  $0\in K$ implies that $\dist(x,\partial \Omega) = \dist(x,K)$ for every $x\in B(0,1)\setminus K$. Then, we obtain that
     \begin{align*}
         |Du|^p(x)& \leq \sup\limits_{z\in B(x,r)} |Du|^p(z) 
         \leq 
         \frac{C(n,p)}{\dist(x,\partial\Omega)^n} \int_{B(x,2r)} |Du|^p \, \dz
         \\ & \leq
         \frac{ C(n,p)}{\dist(x,\partial\Omega)^n} \int_{\Omega} |Du|^p \, \dz
         =
         \frac{ C(n,p) \|Du\|^p_{p,\Omega}}{\dist(x,K)^n}.
    \end{align*}
    However, by \cite[Remark 3.3]{KK94}, see also \cite[Example 12.9]{KLV21} for an alternative proof, $u$ is not globally integrable to any higher exponent than $p$, i.e.~there is no $\delta>0$ with $Du\in L^{p+\delta}(\Omega)$.
    Now, we consider the double-phase integral with Lipschitz continuous coefficient $a(x):=\dist(x,K)$ and any $p<q< p(1+1/n)$.
    By the preceding estimate, we find that
    \begin{equation*}
         \kappa:= \sup_{x\in \Omega} a(x) |Du|^{q-p}(x) < \infty.
    \end{equation*}
    Further, since $u$ is a minimizer of the $p$-energy functional, we obtain that
    \begin{align*}
        \mathcal{F}[u;\spt(u-v)] &= \int_{\spt(u-v)} (|Du|^p + a(x) |Du|^q) \, \dx
        \leq
        (1+ \kappa) \int_{\spt(u-v)} |Du|^p \, \dx
        \\ & \leq
        (1+ \kappa) \int_{\spt(u-v)} |Dv|^p \, \dx
        \leq
        (1+\kappa) \mathcal{F}[v;\spt(u-v)],
    \end{align*}
    for any $v\in W^{1,p}(\Omega)$ with $\spt(u-v)\subset \Omega$.
    This shows that $u$ is a quasi-minimizer of $\mathcal{F}$ with $C_Q= 1 +\kappa$ and thus every assumption of Theorem \ref{thm:higher_int} but the local $p$-fatness condition in a neighborhood of $\{x \in \partial\Omega : a(x)=0\}$ is satisfied.
    Thus, in general the conclusion of Theorem \ref{thm:higher_int} does not hold without this fatness condition, because we would otherwise get a contradiction to the fact that $Du\notin L^{p+\delta}(\Omega)$ for any $\delta>0$.
\end{remark}

%%%%%%%%%%%%%%%%%%%%%%%%%%%%%%%%%%%%%%%%%%%%%%%%%%%%%%%%%%%%%%%%%%%

\end{document}